\documentclass[11pt,a4paper]{amsart}
\pdfoutput=1
\usepackage{amsmath,amsfonts,amssymb,amsthm,epsfig,amscd,psfrag,latexsym,
comment}
\usepackage{caption}
\usepackage{graphicx}
\usepackage{array}
\usepackage{subfigure}
\usepackage[all]{xy}
\input xy
\xyoption{all}
\usepackage[colorlinks=true, pdfpagemode=none, pdfmenubar=false, pdfstartview=FitH, linkcolor=blue, citecolor=blue, urlcolor=blue, pdffitwindow=false]{hyperref}

\addtolength{\hoffset}{-2cm}
\addtolength{\textwidth}{3cm}

\title{Reid's recipe and derived categories}
\author{Timothy Logvinenko} 
\email{T.Logvinenko@liv.ac.uk} 
\address{Department of Mathematical Sciences\\
University of Liverpool\\ 
Peach Street\\ 
Liverpool, L69 7ZL\\
UK}

\DeclareMathOperator{\eend}{End}

\DeclareMathOperator{\gl}{GL}
\DeclareMathOperator{\gsl}{SL}

\DeclareMathOperator{\Supp}{Supp}
\DeclareMathOperator{\picr}{Pic}

\DeclareMathOperator{\cl}{Cl}

\DeclareMathOperator{\hilb}{Hilb}

\DeclareMathOperator{\irr}{Irr}

\DeclareMathOperator{\supp}{Supp}

\DeclareMathOperator{\cohcat}{Coh}

\DeclareMathOperator{\lder}{\bf L}
\DeclareMathOperator{\rder}{\bf R}

\DeclareMathOperator{\hex}{Hex}
\DeclareMathOperator{\Except}{Exc}

\begin{document}

\def\bv{\mathbf{v}}
\def\kgc_{K^*_G(\mathbb{C}^n)}
\def\kgchi_{K^*_\chi(\mathbb{C}^n)}
\def\kgcf_{K_G(\mathbb{C}^n)}
\def\kgchif_{K_\chi(\mathbb{C}^n)}
\def\gpic_{G\text{-}\picr}
\def\gcl_{G\text{-}\cl}
\def\trch_{{\chi_{0}}}
\def\regring{{R}}
\def\regrep{{V_{\text{reg}}}}
\def\givrep{{V_{\text{giv}}}}
\def\lbar{{(\mathbb{Z}^n)^\vee}}
\def\genpx_{{p_X}}
\def\genpy_{{p_Y}}
\def\genpcn_{p_{\mathbb{C}^n}}
\def\gnat{gnat}
\def\twalg{{\regring \rtimes G}}
\def\L{{\mathcal{L}}}
\def\O{{\mathcal{O}}}
\def\gcd{\mbox{gcd}}
\def\lcm{\mbox{lcm}}
\def\tf{{\tilde{f}}}
\def\tD{{\tilde{D}}}

\def\mckquiv{\mbox{Q}(G)}
\def\C{{\mathbb{C}}}
\def\sF{{\mathcal{F}}}
\def\sW{{\mathcal{W}}}
\def\sL{{\mathcal{L}}}
\def\O{{\mathcal{O}}}
\def\Z{{\mathbb{Z}}}
\def\hmone{{\mathcal{W}}}

\theoremstyle{definition}
\newtheorem{defn}{Definition}[section]
\newtheorem*{defn*}{Definition}
\newtheorem{exmpl}[defn]{Example}
\newtheorem*{exmpl*}{Example}
\newtheorem{exrc}[defn]{Exercise}
\newtheorem*{exrc*}{Exercise}
\newtheorem*{chk*}{Check}
\newtheorem*{remarks*}{Remarks}
\theoremstyle{plain}
\newtheorem{theorem}{Theorem}[section]
\newtheorem*{theorem*}{Theorem}
\newtheorem{conj}[defn]{Conjecture}
\newtheorem*{conj*}{Conjecture}
\newtheorem{prps}[defn]{Proposition}
\newtheorem*{prps*}{Proposition}
\newtheorem{cor}[defn]{Corollary}
\newtheorem*{cor*}{Corollary}
\newtheorem{lemma}[defn]{Lemma}
\newtheorem*{claim*}{Claim}
\newtheorem{Specialthm}{Theorem}
\renewcommand\theSpecialthm{\Alph{Specialthm}}
\numberwithin{equation}{section}
\renewcommand{\textfraction}{0.001}
\renewcommand{\topfraction}{0.999}
\renewcommand{\bottomfraction}{0.999}
\renewcommand{\floatpagefraction}{0.9}
\setlength{\textfloatsep}{5pt}
\setlength{\floatsep}{0pt}
\setlength{\abovecaptionskip}{2pt}
\setlength{\belowcaptionskip}{2pt}
\begin{abstract}
We prove two conjectures from 
\cite{CautisLogvinenko-ADerivedApproachToGeometricMcKayCorrespondence}
which describe the geometrical McKay correspondence for a finite abelian 
$G \subset \gsl_3(\mathbb{C})$ such that $\mathbb{C}^3/G$ has a single
isolated singularity. We do it by studying the relation between 
the derived category mechanics of computing a certain Fourier-Mukai
transform and a piece of toric combinatorics known as `Reid's
recipe', effectively providing a categorification of the latter.  
\end{abstract}

\maketitle

\section{Introduction} \label{section-intro}

The classical McKay correspondence is a one-to-one correspondence 
\begin{align*}
\xymatrix{
\irr(G) \setminus \rho_0 \quad \ar@{<->}[rr]^{\;\;1\text{-to-}1}
& & \quad \Except(Y)
}
\end{align*}
between the non-trivial irreducible representations of a finite subgroup 
$G$ of $\gsl_2(\mathbb{C})$ and the irreducible exceptional divisors on 
the minimal resolution $Y$ 
of the singular quotient space $\mathbb{C}^2/G$. It first arose from 
an observation by McKay in \cite{McKay-GraphsSingularitiesAndFiniteGroups} 
which implied a coincidence of the representation graph of $G$, less the 
trivial representation $\rho_0$, and the intersection graph of $\Except(Y)$. 
Gonzales-Sprinberg and Verdier in 
\cite{GsV-ConstructionGeometriqueDeLaCorrespondanceDeMcKay} gave a
geometric construction where this coincidence was shown to arise 
naturally from a $K$-theory isomorphism 
$\Theta:\; K^G(\mathbb{C}^3) \rightarrow K(Y)$ between 
the $G$-equivariant $K$-theory of $\mathbb{C}^2$ and 
the $K$-theory of $Y$. In modern language, $\Theta$ is defined by
identifying $Y$ with $G$-$\hilb(\mathbb{C}^2)$, the fine moduli 
space of $G$-clusters\footnote{A \em $G$-cluster \rm is a
finite-length $G$-invariant subscheme $Z$ such that $H^0(Z)$ 
is isomorphic to the regular representation of $G$. It serves as 
a scheme-theoretic generalization of a concept of a set-theoretic
orbit of $G$.} in $\mathbb{C}^2$, and setting $\Theta$
to be the $K$-theoretic Fourier-Mukai transform defined by 
the universal $G$-cluster family $\mathcal{M}$ on 
$Y \times \mathbb{C}^2$
\begin{align*}
\Theta(-) = \left[\pi_{Y *} \left( \mathcal{M} \otimes
\pi^*_{\mathbb{C}^2} (-) \right) \right]^G
\end{align*}
where $\pi_{Y}$ and $\pi_{\mathbb{C}^2}$ are projections from 
$Y \times \mathbb{C}^2$ to $Y$ and $\mathbb{C}^2$, respectively.
The functor $\left[-\right]^G: K^G(Y) \rightarrow K(Y)$ is
the functor of taking the $G$-invariant part of a $G$-sheaf. 
It was then proved in
\cite{GsV-ConstructionGeometriqueDeLaCorrespondanceDeMcKay}
that for every $\rho \in \irr(G) \setminus \rho_0$ there exists
a unique $E_\rho \in \Except(Y)$ such that $\Theta(\mathcal{O}_0 \otimes 
\rho) = \left[ \mathcal{O}_{E_\rho}(-1) \right]$, where $\mathcal{O}_0$
is the skyscraper sheaf of the origin $(0,0) \in \mathbb{C}^2$. 
The group $G$ acts on $Y$ trivially, so every $G$-sheaf $\mathcal{F}$
on $Y$ splits up as a direct sum 
$\bigoplus_{\rho \in \irr(G)} \mathcal{F}_\rho \otimes \rho$ where
each $\mathcal{F}_\rho$ is a $G$-invariant sheaf called \em the
$\rho$-eigensheaf \rm of $\mathcal{F}$. Observe that not only we have
$\left[\mathcal{F}\right]^G = \mathcal{F}_{\rho_0}$, by definition, but 
more generally 
$\left[\mathcal{F} \otimes \rho\right]^G = \mathcal{F}_{\rho^\vee}$
for every $\rho \in \irr(G)$. Thus by looking at
$\Theta(\mathcal{O}_0 \otimes \rho)$ we are looking at how does
$\pi_{Y *}\left(\mathcal{M}_{|{\Except(Y) \times \{0\}}} \right)$
break up into $G$-eigensheaves. Very roughly, to obtain the
correspondence $\irr(G) \setminus \rho_0 \leftrightarrow \Except(Y)$ 
we break up the exceptional set of $Y$ with respect to the $G$-action
on its natural $G$-cluster scheme structure and observe that 
for each non-trivial $\rho \in \irr(G)$ we get a different irreducible
curve.  

In
\cite{CautisLogvinenko-ADerivedApproachToGeometricMcKayCorrespondence}
we proposed a program of the geometric McKay correspondence
which generalises the ideas of 
\cite{GsV-ConstructionGeometriqueDeLaCorrespondanceDeMcKay} to 
dimension three. In a celebrated result of \cite{BKR01} it was shown 
that the $K$-theoretic equivalence $\Theta$ of 
\cite{GsV-ConstructionGeometriqueDeLaCorrespondanceDeMcKay} lifts
naturally to the level of derived categories and gives for any 
finite subgroup $G \subset \gsl_{n}(\mathbb{C})$, where $n = 2$ or $3$, 
the equivalence $\Phi: D(Y) \rightarrow D^G(\mathbb{C}^n)$ between 
the bounded derived categories of coherent sheaves on the distinguished 
crepant resolution $Y = G$-$\hilb(\mathbb{C}^{n})$ and of coherent
$G$-sheaves on $\mathbb{C}^{n}$. In
\cite{CautisLogvinenko-ADerivedApproachToGeometricMcKayCorrespondence}
we showed that the inverse $\Psi: D^G(\mathbb{C}^{n}) \rightarrow D(Y)$
of $\Phi$ is the Fourier-Mukai transform
\begin{align*}
\Psi(-) = \left[\pi_{Y *} \left( \tilde{\mathcal{M}}
\overset{\lder}{\otimes} \pi^*_{\mathbb{C}^n} (-) \right) \right]^G
\end{align*}
defined by the dual family $\tilde{\mathcal{M}}$ of the universal family 
$\mathcal{M}$ of $G$-clusters on $Y \times \mathbb{C}^n$. We then 
showed how to compute the transforms $\Psi(\mathcal{O}_0 \otimes \rho)$ 
and although apriori each of these transforms is a
complex in $D(Y)$ we were able to show that for an abelian $G$ all
the cohomologies of this complex vanish except for one, i.e. for every 
$\rho \in \irr(G)$ the transform $\Psi(\mathcal{O}_0 \otimes \rho)$ is 
a shift of a coherent sheaf
(\cite{CautisLogvinenko-ADerivedApproachToGeometricMcKayCorrespondence}, 
Theorem 1.1). This is expected to also hold for non-abelian $G$. 
We then proposed the geometric McKay correspondence 
to be $\rho \mapsto \supp \left(\Psi(\mathcal{O}_0 \otimes
\rho)\right)$, assigning to every $\rho$ a closed subscheme of 
the exceptional set of $Y$. In dimension $2$ this gives precisely 
the classical $\irr(G) \setminus \rho_0 \leftrightarrow \Except(Y)$
correspondence of \cite{GsV-ConstructionGeometriqueDeLaCorrespondanceDeMcKay}. 
In dimension $3$ the correspondence is more complicated - as was expected
given that generally $G$ has more irreducible representations then there 
are irreducible divisors on $Y$. 
For abelian $G \subset \gsl_3(\mathbb{C})$ we were able 
to employ the numerical methods of 
\cite{Logvinenko-Natural-G-Constellation-Families} 
to compute this correspondence very explicitly
(\cite{CautisLogvinenko-ADerivedApproachToGeometricMcKayCorrespondence},
\S6). Based on numerous computational evidence we made a 
conjecture as to the form that $\Psi(\mathcal{O}_0 \otimes \rho)$
take in dimension three and in the present paper we prove it:
\begin{theorem}(\cite{CautisLogvinenko-ADerivedApproachToGeometricMcKayCorrespondence},
Conj. 1.1) \label{theorem-geometrical-mckay-correspondence}
Let $G \subset \gsl_3(\mathbb{C})$ be a finite abelian subgroup such 
that $\mathbb{C}^3/G$ has a single isolated singularity at the origin. 
Then for any $\chi \in \irr(G)$ the Fourier-Mukai transform 
$\Psi(\mathcal{O}_0 \otimes \chi)$ is one of the following:
\begin{enumerate}
\item $\mathcal{L}^{-1}_{\chi} \otimes \mathcal{O}_{E_i}$
\item $\mathcal{L}^{-1}_{\chi} \otimes \mathcal{O}_{E_i \cap E_j}$
\item $\mathcal{F} [1] \quad\text{ where }\quad \supp_Y(\mathcal{F}) = 
E_{i_1} \cup \dots \cup E_{i_k}$
\item $\mathcal{O}_{Y}(\Except(Y)) \otimes
\mathcal{O}_{\Except(Y)}[2]$
\end{enumerate} where $E_i$ are irreducible exceptional divisors,
$\mathcal{F}$ is a coherent sheaf and $\L_{\chi} =
\Psi(\mathcal{O}_{\mathbb{C}^3} \otimes \chi^{-1}) = (\pi_{Y *}
\mathcal{M})_{\chi}$ are the tautological bundles on $Y$.  
\end{theorem}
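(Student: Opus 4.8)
The plan is to reduce the theorem to an explicit toric computation and then read off the answer, case by case, from Reid's recipe. Because $G$ is abelian, $Y=G\text{-}\hilb(\C^3)$ is the toric variety of the triangulation $\Sigma$ of the junior simplex attached to $G$; the compact exceptional divisors $E_i$ are indexed by the interior lattice points of $\Sigma$, the compact exceptional curves $E_i\cap E_j$ by its interior edges, and everything $G$-equivariant on $\C^3$ decomposes over $\irr(G)$. The first step is to build a tractable model of $\Psi(\O_0\otimes\chi)$: resolve the skyscraper $\O_0$ on $\C^3$ by the $G$-equivariant Koszul complex on the three coordinate functions --- its terms are $\O_{\C^3}$ tensored with the exterior powers $\wedge^\bullet V^\vee$ of the given three-dimensional representation $V$, and the top power $\wedge^3 V^\vee$ is trivial because $G\subset\gsl_3(\C)$ --- and apply the Fourier--Mukai transform $\Psi$ termwise. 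Over every affine toric chart $U_\sigma\cong\C^3$ of $Y$ the universal family $\mathcal M$, hence the dual family $\tilde{\mathcal M}$ serving as the kernel of $\Psi$, is presented by explicit monomial generators and relations (Reid's ``jigsaw'' description of the $G$-cluster associated with $\sigma$), so on $U_\sigma$ the model becomes a concrete complex of free modules; forming the \v{C}ech complex of the cover $\{U_\sigma\}_{\sigma\in\Sigma}$ then exhibits $\Psi(\O_0\otimes\chi)$ as a bounded complex of coherent sheaves on $Y$, all supported on $\Except(Y)$. This is in essence the computational machinery of \cite{CautisLogvinenko-ADerivedApproachToGeometricMcKayCorrespondence}; the novelty needed here is that we must retain the whole complex rather than just the single surviving cohomology sheaf identified there.

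The conceptual core of the argument is to match the differentials of this complex with Reid's recipe, which is the ``categorification'' promised in the abstract. On $U_\sigma$ the maps in the \v{C}ech and Koszul assembly are, up to invertible functions, the monomial relations presenting $\tilde{\mathcal M}$, and the zero locus of each such monomial is a union of exceptional strata; the combinatorial record of which monomial vanishes on which stratum, as $\sigma$ runs over the triangles of $\Sigma$, is precisely the bookkeeping that Reid's recipe performs when it marks the interior points and edges of $\Sigma$ by characters. I would make this precise by showing that the $\chi$-eigen-subcomplex of the above complex is exact over the complement of the locus of $\Sigma$ marked by $\chi$, and by computing the cohomology it leaves behind over that locus. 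Two auxiliary computations feed into the result: the tautological twist, recognised via $\L_\chi=(\pi_{Y*}\mathcal M)_\chi\cong\O_Y(D_\chi)$ where the coefficients of the divisor $D_\chi$ are themselves read from Reid's recipe, and the cohomological degree, which is fixed by the ``shape'' of the marking of $\chi$ together with which Koszul term of $\O_{\C^3}\otimes\wedge^\bullet V^\vee$ survives.

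The four alternatives of the theorem are then exactly the four shapes a character can take in Reid's recipe in dimension three. If $\chi$ marks a single interior lattice point $e$, the computation yields $\L_\chi^{-1}\otimes\O_{E_e}$ in degree $0$, giving (1); if $\chi$ marks a single interior edge $e_ie_j$, it yields $\L_\chi^{-1}\otimes\O_{E_i\cap E_j}$, giving (2); if $\chi$ marks a connected union of two or more interior edges, the contributions localised along the edges of the chain glue with a nontrivial connecting map and the surviving cohomology is forced into degree $-1$, producing a single sheaf $\mathcal F$ whose support is the union $E_{i_1}\cup\dots\cup E_{i_k}$ of the divisors incident to the marked edges, giving (3); and the trivial character $\chi_0$ --- the unique character that Reid's recipe attaches to no proper stratum --- picks up the top Koszul term $\wedge^3 V^\vee$, and, on identifying $\O_Y(-\Except Y)$ with the ideal sheaf of the reduced exceptional set on the crepant resolution $Y$, returns $\O_Y(\Except Y)\otimes\O_{\Except Y}[2]$, giving (4). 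I expect the globalisation to be the principal technical obstacle. The computation on a single chart, or on a pair of adjacent charts, is routine; the difficulty is to prove that these local answers assemble into a \emph{single} coherent sheaf in the predicted cohomological degree --- in particular to exclude any stray higher or lower cohomology in case (3), and to show that the support is exactly the asserted union of divisors and nothing larger --- which demands simultaneous control of the \v{C}ech differentials along every chain of adjacent triangles of $\Sigma$, and this is where the finer combinatorics of Reid's recipe (the connectedness of each marked configuration and its local structure at the distinguished vertices) must be used in full.
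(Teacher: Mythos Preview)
Your high-level plan --- reduce to the four possible shapes of a character in Reid's recipe and read off $\Psi(\mathcal{O}_0\otimes\chi)$ in each case --- is exactly the paper's strategy, and your use of the Koszul resolution of $\mathcal{O}_0$ is correct: applying $\Psi$ termwise is precisely what produces the ``cube'' complex of line bundles from the subquiver $\hex(\chi^{-1})$ in \cite{CautisLogvinenko-ADerivedApproachToGeometricMcKayCorrespondence}. But two points of your implementation diverge from what actually makes the proof go through.

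First, the \v{C}ech framing is a misdirection. The transform is already a global complex of line bundles on $Y$; one does not need to glue local models over the charts $U_\sigma$, and doing so would only obscure the structure. What one needs instead is, for each exceptional divisor $E$, to know which of the twelve maps in the cube vanish along $E$. The paper packages this information into the \emph{sink--source graph} $SS_{\mathcal{M},E}$ drawn on the McKay quiver, and the core technical result (Propositions~\ref{prps-(3,3)-source-graph-to-case-1}--\ref{prps-3-(1,2)-sources-to-case-3}, summarised in Theorem~\ref{theorem-ssgraphs-to-divisor-types}) is that the three possible shapes of $SS_{\mathcal{M},E}$ correspond precisely to the three cases of Reid's classification of the vertex $e$ in $\Sigma$, with the edge-lengths in the graph recovering the exponents in the monomial ratios marking the incident edges. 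This is the bridge between the derived-category computation and Reid's combinatorics that your proposal gestures at (``match the differentials with Reid's recipe'') but does not supply; without it, the step you flag as ``the principal technical obstacle'' remains open.

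Second, two details are off. In case~(3) the support of $H^{-1}$ is not the union of all divisors incident to a marked edge, but only those containing \emph{two or more} curves marked by $\chi$ (Lemma~\ref{lemma-divisor-in-supp-h1-iff-two-marked-curves}); a divisor meeting only one marked curve contributes nothing. And the exclusion of stray curve components from $\supp H^{-1}$ (Lemma~\ref{lemma-support-h1-doesn't-contain-toric-curves}) is not a gluing argument but a direct case check on the vertex classes of Figures~\ref{figure-09}--\ref{figure-13}, again via the sink--source formalism. Once Theorem~\ref{theorem-reids-recipe-knows-everything} is established this way, Theorem~\ref{theorem-geometrical-mckay-correspondence} is immediate from the fact that every nontrivial $\chi$ marks a divisor, a single curve, or several curves.
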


We prove Theorem $\ref{theorem-geometrical-mckay-correspondence}$ by
investigating the relation we uncovered between the geometric McKay
correspondence and a piece of toric geometric combinatorics known as
`Reid's recipe'. It was originally developed by Reid in
\cite{Kinosaki-97} and then employed by Craw in
\cite{Craw-AnexplicitconstructionoftheMcKaycorrespondenceforAHilbC3}
to tackle the problem of finding a basis for $H^*(Y,\mathbb{Z})$
naturally bijective to $\irr(G)$. This problem also has its roots
in \cite{GsV-ConstructionGeometriqueDeLaCorrespondanceDeMcKay} where
it was shown that $\{c_1(\mathcal{L}_\rho)\}_{\rho \in
\irr(G)\setminus\rho_0}$ is the basis of $H^2(Y,\mathbb{Z})$
dual to the basis $\{[E]\}_{E \in \Except(Y)}$ of $H_2(Y,\mathbb{Z})$ 
with $[E_\chi]$ being precisely the vector dual to
$c_1(\mathcal{L}_\chi)$. Taking $\mathcal{L}_{\rho_0} = \mathcal{O}_Y$ to base 
$H^0(Y,\mathbb{Z})$ we obtain a natural basis of $H^*(Y,\mathbb{Z})$
in dimension two. In dimension three
$c_1(\mathcal{L}_\chi)$ still span $H^2(Y,\mathbb{Z})$, but there are
relations. Reid's recipe singles out $\mathcal{L}_\chi$ whose first 
Chern classes are redundant and replaces them by abstract elements 
of $K(Y)$ in such a way that the second Chern
classes of these `virtual' bundles base $H^4(Y,\mathbb{Z})$. The recipe 
is based around a marking which via some simple toric geometric
calculations assigns a character $\chi \in \irr(G)$ to every
exceptional toric curve on $Y$ and then
a character or a pair of characters to every exceptional toric divisor
$E \in \Except(Y)$ (see
\cite{Craw-AnexplicitconstructionoftheMcKaycorrespondenceforAHilbC3},
Section 3 or our short summary of it in the Section
\ref{subsection-reids-recipe} of the present paper). Based on
some more computational evidence we conjectured in 
\cite{CautisLogvinenko-ADerivedApproachToGeometricMcKayCorrespondence}, 
Conj. $1.3$, the relation between the marking of Reid's recipe 
and the transforms $\Psi(\mathcal{O}_0 \otimes \chi)$. A stronger 
version of this conjecture we prove in the present paper. 
The ``only if'' implication of item
(\ref{item-chi-marks-a-divisor}) was proved by Craw and Ishii
in \cite{Craw-Ishii-02}, Prop. 9.3, but the rest are original 
to this paper and for the first time a complete categorification
is obtained which for every possibility for $\chi$ in Reid's recipe 
describes the transform $\Psi(\mathcal{O}_0 \otimes \chi)$ in $D(Y)$: 
\begin{theorem}\label{theorem-reids-recipe-knows-everything}
Let $G \subset \gsl_3(\mathbb{C})$ be a finite abelian subgroup
such that $\mathbb{C}^3/G$ has a single isolated singularity at the
origin. Let $\chi$ be a character of $G$. Then in Reid's recipe
$\chi$ marks
\begin{enumerate}
\item \label{item-chi-marks-a-divisor} a divisor $E \in \Except(Y)$ if 
and only if 
$\Psi(\mathcal{O}_0 \otimes \chi) = \mathcal{L}^{-1}_\chi \otimes 
\mathcal{O}_E$.  
\item \label{item-chi-marks-a-single-curve} a single curve $C$ if 
and only if 
$\Psi(\mathcal{O}_0 \otimes \chi) = \mathcal{L}^{-1}_\chi \otimes 
\mathcal{O}_C$. 
\item \label{item-chi-marks-several-curves} several curves if and only if 
$\Psi(\mathcal{O}_0 \otimes \chi)$ is supported in degree $-1$. 
The support of $H^{-1} \left(\Psi(\mathcal{O}_0 \otimes
\chi)\right)$ is then the union of all $E \in \Except(Y)$ which
contain two or more of the curves marked by $\chi$.
\item \label{item-chi-marks-nothing} nothing (i.e. $\chi$ is
the trivial character $\chi_0$) if and only if
$\Psi(\mathcal{O}_0 \otimes \chi) = \mathcal{O}_Y(\Except(Y)) \otimes 
\mathcal{O}_{\Except(Y)}[2]$. 
\end{enumerate}
\end{theorem}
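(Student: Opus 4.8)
The plan is to make the Fourier--Mukai transform $\Psi(\mathcal{O}_0 \otimes \chi)$ completely explicit as a bounded complex of tautological line bundles and then to read off its cohomology sheaves, chart by chart, on the toric resolution $Y = G\text{-}\hilb(\C^3)$, recognising the resulting bookkeeping as Reid's recipe itself. The first step is a Koszul model for the transform. Let $\chi_1,\chi_2,\chi_3$ be the characters carried by the coordinate generators of $\C[x,y,z]$, so $\chi_1\chi_2\chi_3 = \chi_0$ since $G \subset \gsl_3(\C)$. Resolving $\mathcal{O}_0 \otimes \chi$ by the $G$-equivariant Koszul complex on $(x,y,z)$ and applying $\Psi$ termwise --- which is legitimate because $\Psi$ is a triangulated equivalence, commutes with finite direct sums, and by Theorem~\ref{theorem-geometrical-mckay-correspondence} sends $\mathcal{O}_{\C^3}\otimes\psi$ to the tautological bundle $\mathcal{L}_{\psi^{-1}}$ --- represents $\Psi(\mathcal{O}_0\otimes\chi)$ by the complex
\begin{align*}
C^\bullet_\chi:\qquad \mathcal{L}_{\chi^{-1}}\;\longrightarrow\;\bigoplus_{i=1}^{3}\mathcal{L}_{\chi^{-1}\chi_i}\;\longrightarrow\;\bigoplus_{i=1}^{3}\mathcal{L}_{\chi^{-1}\chi_i^{-1}}\;\longrightarrow\;\mathcal{L}_{\chi^{-1}}
\end{align*}
in cohomological degrees $-3,-2,-1,0$, whose differentials are the maps induced by multiplication by $x$, $y$ and $z$. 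The whole theorem now amounts to computing the cohomology sheaves $H^j(C^\bullet_\chi)$ and identifying, using the relations among the $\mathcal{L}_\chi$ (which are themselves the output of Reid's recipe), the twist $\mathcal{L}^{-1}_\chi$ on the supports where these sheaves live.

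The second step is localisation. On each affine toric chart $U_\sigma$ of $Y$ --- one for every basic triangle of the triangulation of the junior simplex underlying $G\text{-}\hilb$ --- each $\mathcal{L}_\psi$ is trivial and canonically generated by an explicit monomial $m^\sigma_\psi$, the data supplied by the numerical description of \cite{Logvinenko-Natural-G-Constellation-Families}. Under these trivialisations the differentials of $C^\bullet_\chi|_{U_\sigma}$ become multiplication by the monomials $m^\sigma_{\chi^{-1}}/m^\sigma_{\chi^{-1}\chi_i^{\pm 1}}$, so $H^j(C^\bullet_\chi)|_{U_\sigma}$ is the cohomology of a Koszul-type complex of free modules with explicit monomial entries, computable explicitly. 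The key point is that the way these monomials change from one chart to an adjacent one is exactly the combinatorial input of Reid's recipe recalled in \S\ref{subsection-reids-recipe}: the recipe attaches $\chi$ to a toric curve $C = E_i\cap E_j$ precisely when this comparison records a one-dimensional jump of $m_\chi$ along $C$, and to a divisor $E$ when the jump at $E$ is two-dimensional.

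The third step is the case analysis, organised by the four mutually exclusive outcomes of the recipe. If $\chi$ marks a divisor $E$, the local computation shows that on every chart the cokernel of $\bigoplus_i\mathcal{L}_{\chi^{-1}\chi_i^{-1}}\to\mathcal{L}_{\chi^{-1}}$ is $\mathcal{L}^{-1}_\chi\otimes\mathcal{O}_E$ and $C^\bullet_\chi$ is exact to the left, giving item~(\ref{item-chi-marks-a-divisor}) --- the ``only if'' half of which is already \cite{Craw-Ishii-02}, Prop.~9.3. If $\chi$ marks a single curve $C$, the same analysis yields $H^0(C^\bullet_\chi) = \mathcal{L}^{-1}_\chi\otimes\mathcal{O}_C$ with all other cohomology vanishing, which is item~(\ref{item-chi-marks-a-single-curve}). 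If $\chi$ marks several curves, one has to show that $H^0 = H^{-2} = H^{-3} = 0$ while $H^{-1}$ is a coherent sheaf whose support, assembled from the charts, is the union of those $E \in \Except(Y)$ containing at least two of the curves marked by $\chi$ --- item~(\ref{item-chi-marks-several-curves}). Finally, if $\chi = \chi_0$, a separate argument gives item~(\ref{item-chi-marks-nothing}): either one evaluates $C^\bullet_{\chi_0}$ directly, all its terms now being honest tautological bundles with $\mathcal{L}_{\chi_0} = \mathcal{O}_Y$, or one checks through $\Phi = \Psi^{-1}$ and the exact sequence $0\to\mathcal{O}_Y\to\mathcal{O}_Y(\Except(Y))\to\mathcal{O}_Y(\Except(Y))\otimes\mathcal{O}_{\Except(Y)}\to 0$ that $\Phi(\mathcal{O}_Y(\Except(Y))\otimes\mathcal{O}_{\Except(Y)}[2]) = \mathcal{O}_0$.

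I expect the main obstacle to be the ``several curves'' case. The divisor and single-curve cases are essentially local around one exceptional surface or curve, but here one must control how the local cohomology sheaves $H^{-1}(C^\bullet_\chi|_{U_\sigma})$ glue across the whole exceptional set: proving the clean vanishing of $H^0$ (so that no stray skyscraper survives in degree $0$) and that the support of $H^{-1}$ is \emph{exactly}, not merely contained in, the union of exceptional divisors meeting two or more of the curves marked by $\chi$. This forces a step-by-step matching of the global combinatorics of which curves a given character marks against the ranks of the Koszul differentials over $Y$, and I expect it to absorb the bulk of the work; the degree-$(-3)$ term never contributing is part of the same phenomenon and must be ruled out along the way.
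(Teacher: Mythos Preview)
Your Koszul model $C^\bullet_\chi$ is exactly the skew-commutative cube of \cite{CautisLogvinenko-ADerivedApproachToGeometricMcKayCorrespondence}, Prop.~4.6, so at the level of objects you are computing the same thing the paper computes. The difference is organisational: the paper does not attack the chart-by-chart cohomology directly but filters it through the sink-source graph $SS_{\mathcal{M},E}$ of each exceptional divisor $E$. The content you package into a single sentence --- ``the way these monomials change from one chart to an adjacent one is exactly the combinatorial input of Reid's recipe'' --- is precisely what the paper spends Propositions~\ref{prps-(3,3)-source-graph-to-case-1}--\ref{prps-3-(1,2)-sources-to-case-3} establishing: each of the three possible shapes of $SS_{\mathcal{M},E}$ is matched, with explicit edge-length bookkeeping, to one of the three cases of Craw's classification of vertices in $\Sigma$, and only then does ``$\chi$ is a $(3,0)$-sink for $E$'' become ``$\chi$ marks $E$''. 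Your proposal is correct in spirit but you should recognise that this matching is the theorem, not a preliminary observation.

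There is one place where your plan is too optimistic. You treat the single-curve case as a routine local computation (``the same analysis yields $H^0 = \mathcal{L}^{-1}_\chi\otimes\mathcal{O}_C$''), but the paper cannot do this locally: after showing $E_{e,f}\subset\supp H^0$ it must rule out further curve components, and for that it invokes a \emph{global} argument --- pushing $\mathcal{L}_{\chi'}\otimes\mathcal{L}^{-1}_\chi\otimes\mathcal{O}_D$ forward through $\Phi$ and computing Euler characteristics to force $\deg_{E_\sigma}\mathcal{L}_{\chi'}=0$ on every component of $D$ for every $\chi'\neq\chi$ (Claim~B in the proof). A purely chart-by-chart approach does not see this constraint. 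Similarly, your vanishing claims ($H^0=H^{-2}=0$ in the several-curves case, no stray components in $H^{-1}$) are handled in the paper not by direct Koszul computation but by repeatedly invoking \cite{CautisLogvinenko-ADerivedApproachToGeometricMcKayCorrespondence}, Theorem~1.1, which guarantees a priori that $\Psi(\mathcal{O}_0\otimes\chi)$ is concentrated in a single degree; this lets the paper argue by exclusion across the four cases rather than compute each cohomology sheaf from scratch. If you pursue the direct route you will need a substitute for this input, and you should expect the single-curve case, not only the several-curves case, to require a genuinely global step.
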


In this stronger form Theorem \ref{theorem-reids-recipe-knows-everything}
easily implies Theorem \ref{theorem-geometrical-mckay-correspondence}.
To prove Theorem \ref{theorem-reids-recipe-knows-everything} it 
was necessary to relate the derived category 
mechanics of computing the transforms $\Psi(\mathcal{O}_0 \otimes \chi)$
to the toric combinatorics of the markings in Reid's recipe. 
The first step towards this was made with the \em sink-source graphs
\rm of \cite{CautisLogvinenko-ADerivedApproachToGeometricMcKayCorrespondence},
Section $4$. Given an exceptional divisor $E \in \Except(Y)$ the sink-source
graph $SS_{\mathcal{M}, E}$ of $\mathcal{M}$ along $E$ is a graph drawn 
on top of the \em McKay quiver \rm $Q(G)$ whose vertices 
are certain vertices of $Q(G)$ and whose edges are certain 
paths in $Q(G)$. Which vertices
and which paths is determined by the behaviour of the
family $\mathcal{M}$ generically along $E$ (see Section 
$\ref{section-mckay-quiver-and-sink-source-graphs}$).
Something employed throughout
\cite{CautisLogvinenko-ADerivedApproachToGeometricMcKayCorrespondence}
but never stated explicitly was that 
for any $\chi \in \irr(G)$ the divisor $E$ belongs to the support of 
$H^i \Psi(\mathcal{O}_0 \otimes \chi)$ for a certain $i$ if and only if 
$\chi$ is a vertex of $SS_{\mathcal{M}, E}$ of a certain type
(Prps. \ref{sinks-and-sources-iff-in-transforms-support} of the present paper). 

It was shown in
\cite{CautisLogvinenko-ADerivedApproachToGeometricMcKayCorrespondence}
that there are only three possible shapes that the sink-source graph 
$SS_{\mathcal{M}, E}$ of any divisor $E \in \Except(Y)$ can have (see Section 
$\ref{section-mckay-quiver-and-sink-source-graphs}$). On the other hand, in 
Reid's recipe all the divisors in $\Except(Y)$ are divided into three
classes and the marking for a divisor is determined in a different
fashion in each class (see \S 3 of
\cite{Craw-AnexplicitconstructionoftheMcKaycorrespondenceforAHilbC3}
or our account of it in Section $\ref{subsection-reids-recipe}$). 
The crucial step at the heart of the present paper 
is showing that these three classes of the divisors in Reid's recipe
and the three possible shapes of the sink-source graphs of
$\mathcal{M}$ are in exact correspondence (Theorem 
$\ref{theorem-ssgraphs-to-divisor-types}$). Moreover, we can calculate
the monomials which define the toric curves contained in $E$ 
in terms of the lengths of 
the edges of $SS_{\mathcal{M}, E}$\footnote{A length of an edge in
$SS_{\mathcal{M}, E}$ is the number of arrows which compose the
corresponding path in the McKay quiver.} and vice versa (see Prps.
$\ref{prps-(3,3)-source-graph-to-case-1}$-$\ref{prps-3-(1,2)-sources-to-case-3}$). With this established most
of the work needed to prove Theorem
\ref{theorem-reids-recipe-knows-everything} is done as for any 
$E \in \Except(Y)$ and any $\chi \in \irr(G)$ we can translate 
the information that $E \in H^i \Supp \Psi(\mathcal{O}_0 \otimes \chi)$
for some $i$ via the sink-source graph of $E$ into the information on 
the markings of $E$
and of the toric curves $E$ contains. It then remains only to exclude some
cases of $\Supp \Psi (\mathcal{O}_0 \otimes \chi)$ containing extra
curves where it shouldn't, which we do via straightforward calculations 
with long exact sequences in sheaf cohomology. 

\bf Acknowledgments: \rm The author would like to thank Sabin Cautis, 
his co-author on
\cite{CautisLogvinenko-ADerivedApproachToGeometricMcKayCorrespondence}. 
The author would also like to thank Alastair King, Alastair Craw and
Akira Ishii for useful discussions and suggestions. Most of the paper
was written during the author's stay at the University of Liverpool and he
would like to thank everyone at the maths department there and
to thank Prof. Viacheslav Nikulin in particular. The paper was 
completed during the COE-COW conference in the University of Tokyo in 
December 2008 and the author would like to thank the organisers
for inviting him to speak there and thus providing him with a good 
deadline to have this paper finished by. 
\newpage

\section{Preliminaries}
\label{section-preliminaries}

Throughout the paper, we take $G$ to be a finite abelian subgroup
of $\gsl_3(\mathbb{C})$ such that $\mathbb{C}^3/G$ has a single
isolated singularity at the origin.  

\subsection{$G$-$\hilb \mathbb{C}^3$} \label{section-ghilb-and-toric}

As is usual we approach the resolution $Y = G$-$\hilb \mathbb{C}^3$
via the methods of toric geometry. For detailed explanation of this
see
\cite{Logvinenko-Families-of-G-constellations-over-resolutions-of-quotient-singularities},
Section 3.1 or \cite{Craw02}. In brief, let $\mathbb{Z}^3$ be the
lattice of Laurent monomials, where we identify point $m = (m_1, m_2,
m_3)$ with the monomial $x_1^{m_1} x_2^{m_2} x_3^{m_3}$. Let $M
\subset \mathbb{Z}^3$ be the sublattice of $G$-invariant monomials. 
Dually, we have the overlattice $(\mathbb{Z}^3)^\vee \subset L$, known 
as the lattice of weights. As $G$ is finite we have 
$L \subset \mathbb{Q}^3$ and we think of any point $l \in L$ as of 
a triplet $(l_1, l_2, l_3) \in \mathbb{Q}^3$. 

Let $\sigma_+$ be the positive octant cone in 
$L \otimes \mathbb{R} = \mathbb{R}^3$ defined by $\{(e_i) \in
\mathbb{R}^3  \;|\; e_i \geq 0\}$. The original affine space $\mathbb{C}^3$ 
is defined as a toric variety by a single cone $\sigma_+$ and the
lattice $\mathbb{Z}^3$. The singular quotient $\mathbb{C}^3/G$ is 
defined by $\sigma_+$ and the lattice $L$. The crepant resolutions
of $\mathbb{C}^3/G$ are defined in the lattice $L$ by the fans 
which subdivide the cone $\sigma_+$ into regular subcones.
Let $\Delta$ denote the section of $\sigma_+$ by 
the hyperplane $\sum e_i = 1$ in $\mathbb{R}^3$. It is a regular
triangle which we call \it the junior simplex\rm. 
We identify the subdivisions of $\sigma_+$ into regular subcones with
the corresponding triangulations of the junior simplex $\Delta$.

It is described in
\cite{Craw-AnexplicitconstructionoftheMcKaycorrespondenceforAHilbC3},
Section 2, how to construct the triangulation $\Sigma$ of $\Delta$ whose
corresponding fan gives the crepant resolution $Y = G$-$\hilb \mathbb{C}^3$. 
Denote this fan by $\mathfrak{F}$. To each $k$-dimensional cone $\sigma$ 
in $\mathfrak{F}$ corresponds a $(3-k)$-dimensional torus orbit $S_\sigma$ 
in $Y$. Denote by $E_\sigma$ the subscheme of $Y$ given by the closure 
of $S_\sigma$, then $E_\sigma$ is the union of $S_{\sigma'}$ for all
cones $\sigma'$ which contain $\sigma$ as a face.  
Denote by $\mathfrak{E}$ the set $L \cap \Delta$, these are the 
vertices of the triangles in $\Sigma$ and, correspondingly, the generators of 
the one-dimensional cones in $\mathfrak{F}$ . Then 
$\{E_{e}\}_{e \in \mathfrak{E}}$, where we write $E_{e}$ for 
$E_{\left< e \right>}$, are precisely the 
exceptional divisors of $Y$ together with strict transforms of 
the hyperplanes $x^{|G|} = 0$, $y^{|G|} = 0$ and $z^{|G|} = 0$
in $\mathbb{C}^3/G$. 

Two-dimensional cones in $\mathfrak{F}$ are the sides of the triangles 
in $\Sigma$. For any
$e,f \in \mathfrak{E}$ the cone $\left< e,f \right>$ lies
in $\mathfrak{F}$ if and only if the exceptional divisors $E_e$ and 
$E_f$ intersect. The orbit closure $E_{e,f}$ is precisely 
the intersection $E_e \cap E_f$ and it is always a $\mathbb{P}^1$. 

Three-dimensional cones in $\mathfrak{F}$ are the triangles in 
$\Sigma$. For any such cone $\sigma$
we denote by $A_\sigma$ the toric affine chart which consists of
the torus orbits corresponding to all cones in $\mathfrak{F}$ which 
are faces of $\sigma$. We have a natural isomorphism 
$A_\sigma \simeq \mathbb{C}^3$ which maps the torus fixed point 
$E_\sigma$ to the origin $0 \in \mathbb{C}^3$.  

\subsection{Reid's recipe}
\label{subsection-reids-recipe}

Let $\mathcal{M}$ be the universal family of $G$-clusters on $Y \times
\mathbb{C}^3$. It is a $G$-equivariant coherent sheaf on $Y \times
\mathbb{C}^3$. The category of $G$-equivariant quasi-coherent 
sheaves on $Y \times \mathbb{C}^3$ is equivalent to the category of 
quasi-coherent $(\twalg)\otimes_\mathbb{C} \mathcal{O}_Y$-modules on $Y$
via the pushdown functor. So we shall often abuse the notation by
identifying $\mathcal{M}$ with its pushdown to $Y$ considered as an
$(\twalg)\otimes_\mathbb{C} \mathcal{O}_Y$-module.  

The family $\mathcal{M}$ is defined up to an equivalence 
of families, that is - up to a twist by a ($G$-invariant) line bundle on $Y$. 
We normalize by assuming the line bundle $\left[\mathcal{M}\right]^G$ to be 
trivial and identifying it with $\mathcal{O}_Y$. This uniquely 
determines $\mathcal{M}$ up to isomorphism.  Write the decomposition
of $\mathcal{M}$ into $G$-eigensheaves as $\mathcal{M} =
\bigoplus_{\chi \in G^\vee} \mathcal{L}_\chi$ where $G$ acts on
$\mathcal{L}_\chi$ by $\chi$. The sheaves $\mathcal{L}_\chi$ are line
bundles on $Y$ known in the literature as \em tautological \rm or \em
Gonzales-Sprinberg and Verdier \em sheaves. 

Reid's recipe (\cite{Kinosaki-97},
\cite{Craw-AnexplicitconstructionoftheMcKaycorrespondenceforAHilbC3})
is an algorithm to construct the cohomological version of the McKay
correspondence. It takes the Chern classes of the tautological sheaves 
and modifies some of them to turn them into a basis of the cohomology 
ring $H^\bullet(Y,\mathbb{Z})$. It consists of two parts: in the first 
each edge and each vertex in the triangulation $\Sigma$ are marked
by a character of $G$ in accordance with the geometry of the toric 
fan $\mathfrak{F}$ of $Y$. In the second the data of this marking
is used to dictate the way in which the Chern classes of the
tautological sheaves are modified to produce a basis of
$H^\bullet(Y,\mathbb{Z})$. This cohomological construction 
is not relevant to this paper - it is replaced by the version 
of the McKay correspondence proposed in 
\cite{CautisLogvinenko-ADerivedApproachToGeometricMcKayCorrespondence}
where we look at the images of $\mathcal{O}_0 \otimes \chi$ under 
the derived equivalence of \cite{BKR01}. 
However, the data of the marking constructed in the first half of 
the Reid's recipe turns out to dictate the way our correspondence
goes as well -- as demonstrated by our main result, Theorem 
\ref{theorem-reids-recipe-knows-everything}. In a sense, 
our correspondence takes the same source data of the Reid's recipe
marking but realises it in the derived category instead of 
the cohomology ring. 

Below we give a brief summary of the construction of this marking. 
Denote by $\regring$ the coordinate ring $\mathbb{C}[x,y,z]$ of
$\mathbb{C}^3$. First we mark 
each edge $(e,f)$ in the triangulation $\Sigma$ with a character of 
$G$ according to the following rule. The one-dimensional ray in $M$
perpendicular to the hyperplane $\left< e,f\right>$ in $L$ has two
primitive generators: $\frac{m_1}{m_2}$ and $\frac{m_2}{m_1}$, where
$m_1, m_2$ are co-prime regular monomials in $\regring$. 
As $M$ is the lattice of $G$-invariant Laurent monomials, $m_1$ and $m_2$ have
to be of the same character $\chi$ for some $\chi \in G^\vee$. We say
that $(e,f)$ is \it carved out \rm by the ratio $m_1 : m_2$
(or $m_2 : m_1$) and \it mark it \rm by $\chi$. 

Then we mark the vertices of $\Sigma$ according to a recipe which 
is based on the following classification:
 
\begin{prps}[\cite{Craw-AnexplicitconstructionoftheMcKaycorrespondenceforAHilbC3},
see \S2-\S3]
\label{prps-the-classification-of-the-vertices-of-the-triangulation}
For any $e \in \mathfrak{E}$ the corresponding vertex in the triangulation 
$\Sigma$ is one of the following:
\begin{enumerate}
\item \label{case-meeting-of-champions} A meeting point of three lines emanating from the three vertices of
$\Delta$, as depicted on Figure \ref{figure-01}. 

\item \label{case-one-line-from-vertex}
An interior point of exactly one line emanating from a vertex of
$\Delta$. Other than the two edges coming from this line, it also 
has $2$, $3$ or $4$ other edges incident to it, as depicted
on Figure \ref{figure-02} (up to permutation of $x$, $y$ and $z$).

\item \label{case-three-straight-lines} An intersection point of three
straight lines none of which emanate from a vertex of $\Delta$, 
as depicted on Figure \ref{figure-05}. 
\end{enumerate}
\end{prps}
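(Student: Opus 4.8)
The plan is to read the classification off the explicit recipe for the $G$-$\hilb$ triangulation $\Sigma$ given in \cite{Craw-AnexplicitconstructionoftheMcKaycorrespondenceforAHilbC3}, \S2. That recipe builds $\Sigma$ by first subdividing the three sides of $\Delta$ according to the two-dimensional McKay correspondence and then filling the interior by a controlled inductive procedure, the combinatorial type of each new vertex and of the edges at it being governed by a small amount of local data (coprime monomial ratios, equivalently characters of $G$). The two structural facts I would use throughout are: (i) every triangle of $\Sigma$ is unimodular, since $\mathfrak{F}$ is smooth, so the star of any ray $\langle e\rangle$ is a complete unimodular planar fan and the cyclically ordered neighbours $f_1,\dots,f_d$ of $e$ satisfy a toric-surface relation $f_{j-1}+f_{j+1}=a_j f_j + c_j e$ with $a_j\in\mathbb{Z}$; and (ii) every vertex of $\Sigma$ lies on the junior hyperplane $\{\sum_i e_i=1\}$, which upon comparing coordinate sums forces $a_j+c_j=2$, so $a_j=2$ precisely when $f_j$ is the midpoint of $f_{j-1}f_{j+1}$ (a line of $\Sigma$ running straight through $f_j$), while a line of $\Sigma$ passes through $e$ itself precisely when two neighbours $f_j,f_{j'}$ satisfy $f_j+f_{j'}=2e$. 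Together (i)--(ii) leave only finitely many admissible ``stars'' at $e$, up to permuting $x,y,z$.

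Before the case analysis I would pin down the notion of a \emph{line} of $\Sigma$: a maximal chain of edges of the triangulation collinear in the affine plane containing $\Delta$. From the carving rule one checks that all edges of a line carry the same character, so lines are exactly the connected components of equal-character edges --- this is where the combinatorics of $\Sigma$ meets the edge-marking step of Reid's recipe. Call a line a \emph{champion} if one of its endpoints is a vertex of $\Delta$; using the recipe I would show that each vertex of $\Delta$ is the endpoint of exactly one champion (so there are exactly three champions), and that every vertex of $\Sigma$ other than the three corners of $\Delta$ lies on at least one line --- a vertex on a subdivided side lying on that side, which is itself a champion.

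The core is then a case analysis on the number $N(e)\in\{0,1,2,3\}$ of champions passing through a given vertex $e$. The step I expect to be the main obstacle is to exclude $N(e)=2$: I would prove that if the champions of two vertices of $\Delta$ both pass through $e$, then the champion of the third does too, i.e. the three champions are concurrent. This should follow by tracking the characters carved along the three champions and using that, the boundary of $\Delta$ being closed, the three corner directions are not independent in $G^\vee$, so the crossing point of any two champions is automatically carved out by the third as well; equivalently one identifies that common point as the unique junior lattice point realising all three corner directions. Granting this, $N(e)=3$ puts $e$ in Case \ref{case-meeting-of-champions} (Figure \ref{figure-01}); $N(e)=1$, together with the list of admissible stars, puts $e$ in Case \ref{case-one-line-from-vertex}, the valence left over once one collinear pair through $e$ is fixed being exactly $2$, $3$ or $4$ (Figure \ref{figure-02}); and $N(e)=0$ forces, by the admissible-star list and the fact that $e$ lies on at least one line, that $e$ is a crossing of exactly three interior lines, i.e. Case \ref{case-three-straight-lines} (Figure \ref{figure-05}) --- a crossing of only two, or of four or more, lines being incompatible with unimodularity of the triangles around $e$.

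Finally I would note that this analysis is already carried out, in the language of Reid's recipe, in \cite{Craw-AnexplicitconstructionoftheMcKaycorrespondenceforAHilbC3}, \S2--\S3, so for present purposes one may simply quote it; the sketch above serves mainly to explain why the trichotomy is forced and to fix the terminology --- champions, lines, carving ratios --- used in the rest of the paper.
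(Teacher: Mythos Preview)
The paper gives no proof of this proposition; it is stated with a citation to \cite{Craw-AnexplicitconstructionoftheMcKaycorrespondenceforAHilbC3}, \S2--\S3, and left at that. Your final paragraph correctly identifies this, so in that respect your proposal matches the paper exactly.

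The sketch you offer beforehand, however, contains a genuine error. You assert that ``each vertex of $\Delta$ is the endpoint of exactly one champion (so there are exactly three champions)''. This is false for generic $G$: in Craw's construction of the $G\text{-}\hilb$ triangulation, from the corner $e_k$ of $\Delta$ one draws a line for \emph{every} coprime ratio $x_i^a : x_j^b$ (with $\{i,j,k\} = \{1,2,3\}$) such that $x_i^a$ and $x_j^b$ lie in the same $G$-eigenspace, and there are typically many such ratios. Already for $G = \frac{1}{13}(1,5,7)$ several lines emanate from each corner. Your case split on $N(e)$, and in particular your exclusion of $N(e)=2$ via ``the champion of the third vertex does too'', therefore does not go through as written: what one actually has to exclude is that $e$ lies on lines from exactly two of the three corners, and that needs the genuine mechanics of Craw's inductive recipe (how the regular triangles are filled in and how lines from different corners interact) rather than a uniqueness of champions that does not hold. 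Since the paper is content simply to cite Craw, the cleanest fix is to drop the sketch and keep only your last paragraph.
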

\begin{figure}[!h] \begin{center}
\includegraphics[scale=0.24]{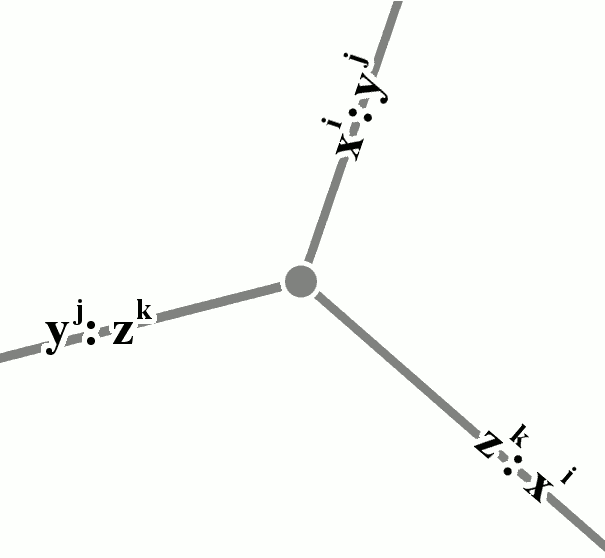} \end{center}
\caption{\label{figure-01} Case \ref{case-meeting-of-champions}}
\end{figure}
\begin{figure}[!h] \centering 
\subfigure[Valency 4] { \label{figure-02a}
\includegraphics[scale=0.25]{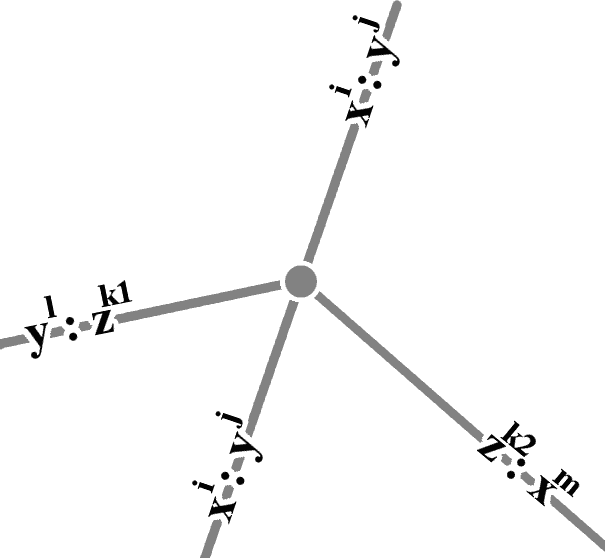}} 
\subfigure[Valency 5] { \label{figure-02b}
\includegraphics[scale=0.25]{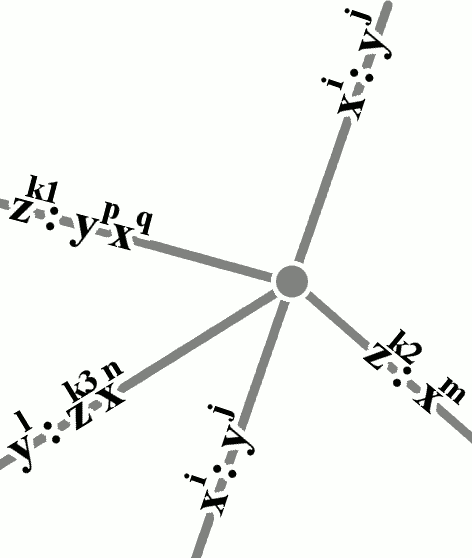}} 
\subfigure[Valency 6] { \label{figure-02c}
\includegraphics[scale=0.25]{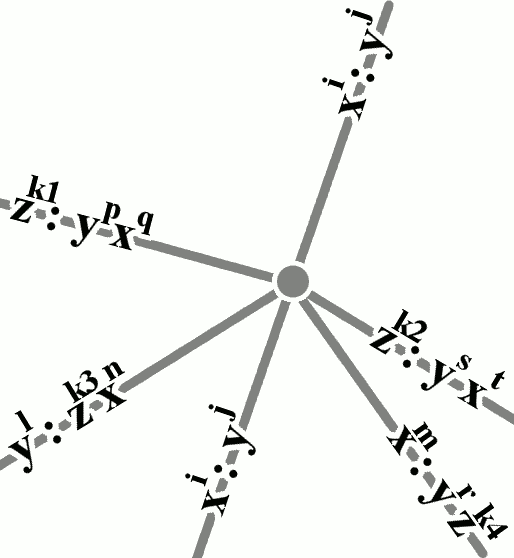}}
\caption{Case \ref{case-one-line-from-vertex}} \label{figure-02}
\end{figure}
\begin{figure}[!htb] \begin{center}
\includegraphics[scale=0.26]{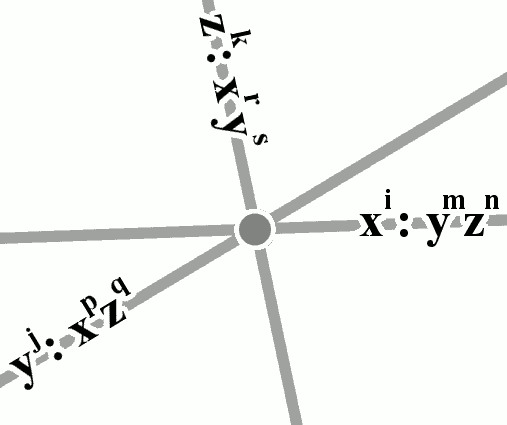} \end{center}
\caption{\label{figure-05} Case \ref{case-three-straight-lines}}
\end{figure}
When vertex $e \in \mathfrak{E}$ belongs to the Case
\ref{case-meeting-of-champions} it is clear 
from Figure \ref{figure-01} that all the three edges incident to 
$e$ are marked with the same character $\chi \in G^\vee$, 
which is the common character of $x^i, y^j$ and $z^k$. Reid's recipe
prescribes for such $e$ to be marked with the character $\chi \cdot \chi$.  

When $e$ belongs to the Case \ref{case-one-line-from-vertex} it is proved in
\cite{Craw-AnexplicitconstructionoftheMcKaycorrespondenceforAHilbC3}, 
Lemmas $3.2$-$3.3$ that $k_1 = k_2$. Reid's recipe prescribes
for such $e$ to be marked with the character $\chi \cdot \chi'$, 
where $\chi$ is the common character of $y^j$ and $z^k$, 
i.e. the character which marks the unique line which emanates from 
one of the vertices of $\Delta$ and contains $e$ as interior point, and $\chi'$ 
is the character of $x^{k_1} = x^{k_2}$, i.e. the character which 
marks precisely two of the remaining edges incident to $e$. 

Finally, when $e$ belongs to the Case \ref{case-three-straight-lines}
it is proved in \cite{Craw-AnexplicitconstructionoftheMcKaycorrespondenceforAHilbC3},
Lemma $3.4$ that the monomials $x^i z^q$, $y^j x^r$ and $z^k y^m$
are all of the same character. Denote it by $\chi \in G^\vee$. 
It is also proved that the monomials $x^i y^s$, $z^k x^p$ and $y^j z^n$ 
are also all of the same character. Denote it by $\chi' \in G^\vee$. 
Reid's recipe prescribes for such $e$ to be marked by two characters - 
$\chi$ and $\chi'$.  

\subsection{$G$-clusters and $G$-graphs} 
\label{section-G-clusters-and-g-graphs}

The resolution map $Y \rightarrow \mathbb{C}^3/G$ induces
an inclusion of $K(Y)$ into $K(\mathbb{C}^3)$ and thus allows 
to view $K(\mathbb{C}^3)$ as a constant sheaf of $(\twalg)
\otimes_\mathbb{C} \mathcal{O}_Y$-modules on $Y$. As shown in 
\cite{Logvinenko-Natural-G-Constellation-Families}, \S 3.1, there 
is a unique $(\twalg) \otimes_\mathbb{C} \mathcal{O}_Y$-module  
embedding of $\mathcal{M}$, normalized as above, into $K(\mathbb{C}^3)$ 
which maps $\mathcal{O}_Y \subset \mathcal{M}$ identically to 
$\mathcal{O}_Y \subset K(\mathbb{C}^3)$. Then for each character $\chi \in
G^\vee$ the image of the embedding $\mathcal{L}_\chi \hookrightarrow 
K(\mathbb{C}^3)$ is $\mathcal{L}(-D_\chi)$ for some uniquely defined
fractional $G$-Weil divisor $D_{\chi} =  \sum_{f \in \mathfrak{E}}
q_{\chi,f} E_f$ (\cite{Logvinenko-Natural-G-Constellation-Families}, \S2).
Thus $\mathcal{M}$ can be written canonically as 
$\bigoplus_{\chi \in G^\vee} \mathcal{L}(-D_\chi)$.
 
Let $\sigma$ be a three-dimensional cone in $\mathfrak{F}$ generated
by some $i,j,k \in \mathfrak{E}$. Then $\mathcal{L}(-D_\chi)$ is 
generated inside $K(\mathbb{C}^3)$ on the affine chart $A_\sigma$ by 
the unique Laurent monomial $r$ for which 
\begin{align}
i(r) = q_{\chi,i}, \quad j(r) = q_{\chi,j}, 
\quad k(r) = q_{\chi,k} . 
\end{align}
This is natural, considering that for any $i \in \mathfrak{E}$ and 
any Laurent monomial $m \in \mathbb{Z}^3$ we have 
$i(m) = \text{val}_{E_i}(m)$ with the RHS being a $\mathbb{Q}$-valuation 
defined as $\frac{1}{|G|} \text{val}_{E_i} m^{|G|}$, where $G$-invariant
Laurent monomial $m^{|G|}$ is treated as a rational function on $Y$
and is valuated at prime Weil divisor $E_i$ (\cite{Logvinenko-Families-of-G-constellations-over-resolutions-of-quotient-singularities}, Prps. 3.2). 

Let $\sigma$ be a three-dimensional cone in $\mathfrak{F}$. The set
$\Gamma_{\sigma} = \{r_{\chi}\}_{\chi \in G^\vee}$, where 
$r_\chi$ is the unique monomial generator of $\mathcal{L}(-D_\chi)$ 
over the affine chart $A_\sigma$, is called 
the \it $G$-graph \rm of $A_\sigma$. The monomials in 
$\Gamma_\sigma$ are precisely the monomials which do not lie in 
the ideal $I_\sigma \subset \regring$ defining the $G$-cluster parametrised 
by the torus fixed point $E_\sigma$ of the chart $A_\sigma$. 

Let $(e, f)$ be any edge in the triangulation $\Sigma$. Let 
$(e, f, g)$ and $(e, f, g')$ be the two triangles containing it
and let $\sigma$ and $\sigma'$  be the corresponding
three-dimensional cones in $\mathfrak{F}$. Let 
$\Gamma_{\sigma} = \{r_\chi\}_{\chi \in G^\vee}$ 
and $\Gamma_{\sigma'} = \{ r'_\chi \}_{\chi \in G^\vee}$ be 
the $G$-graphs of affine toric charts $A_{\sigma}$ and $A_{\sigma'}$.
Suppose that the hyperplane $\left<e,f\right>$ in $L$ 
is carved out by the ratio $m : m'$ for some co-prime regular monomials 
$m, m'$ in $\regring$. Suppose, without loss of generality, that
$g(\frac{m'}{m}) > 0$. The following simple observation is fundamental 
in explaining the link between Reid's recipe and the geometrical
McKay correspondence: 

\begin{lemma} \label{lemma-markings-divide-nonequal-ggraph-pieces}
If $r_\chi \neq r'_\chi$ for some $\chi \in G^\vee$
then $m | r_\chi$ and $m' | r'_\chi$.
\end{lemma}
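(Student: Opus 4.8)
The plan is to work on the two affine charts $A_\sigma$ and $A_{\sigma'}$ simultaneously, comparing the monomial generators $r_\chi$ and $r'_\chi$ of $\mathcal L(-D_\chi)$ via the valuations attached to the rays. Recall that $r_\chi$ is characterised on $A_\sigma=A_{\langle e,f,g\rangle}$ by $e(r_\chi)=q_{\chi,e}$, $f(r_\chi)=q_{\chi,f}$, $g(r_\chi)=q_{\chi,g}$, and similarly $r'_\chi$ on $A_{\sigma'}=A_{\langle e,f,g'\rangle}$ by the analogous three equations involving $e,f,g'$. Since both cones share the face $\langle e,f\rangle$, we automatically have $e(r_\chi)=e(r'_\chi)=q_{\chi,e}$ and $f(r_\chi)=f(r'_\chi)=q_{\chi,f}$. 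Hence the Laurent monomial quotient $t:=r_\chi/r'_\chi$ satisfies $e(t)=f(t)=0$, i.e.\ $\mathrm{val}_{E_e}(t)=\mathrm{val}_{E_f}(t)=0$, so $t$ lies in the one-dimensional sublattice of $\mathbb Z^3$ perpendicular to the hyperplane $\langle e,f\rangle$. By the definition of the marking in Section~\ref{subsection-reids-recipe} that sublattice is generated by $m/m'$; thus $t=(m/m')^{a}$ for some $a\in\mathbb Z$, and the claim $r_\chi\neq r'_\chi$ says precisely $a\neq 0$.

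The next step is to pin down the sign of $a$, which is where the normalisation $g\!\left(\tfrac{m'}{m}\right)>0$ enters. First I would note that $t=(m/m')^{a}$ is a genuine (not merely Laurent) monomial quotient only in a controlled way: $r_\chi$ and $r'_\chi$ are both elements of the respective $G$-graphs $\Gamma_\sigma$, $\Gamma_{\sigma'}$, hence honest monomials in $R=\mathbb C[x,y,z]$, and the minimality built into the construction of $\mathcal M$ (the embedding into $K(\mathbb C^3)$ sending $\mathcal O_Y$ to $\mathcal O_Y$, so that $r_\chi$ is the unique monomial generator, no proper monomial divisor of which still generates) forces $r_\chi$ to contain no "wasteful" factors. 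Concretely, write $m/m' = w/w'$ with $w,w'$ the two coprime regular monomials and $g(w')>0\ge g(w)=0$ (reindexing so $w=m$, $w'=m'$). If $a>0$ then $r_\chi = m^{a}\,r'_\chi/m'^{a}$; comparing exponent vectors monomial-by-monomial, the factor $m^{a}$ divides $r_\chi$ provided $m'^{a}$ divides $r'_\chi$, and symmetrically $m'^{-a}$ would have to divide $r_\chi$ if $a<0$. So the whole statement reduces to: exactly one of "$m^{a}\mid r_\chi$ and $m'^{a}\mid r'_\chi$" (case $a>0$) or "$m^{-a}\mid r'_\chi$ and $m'^{-a}\mid r_\chi$" (case $a<0$) holds, and the normalisation selects the first.

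To decide this I would use the change-of-chart geometry along the wall $\langle e,f\rangle$: moving from $A_\sigma$ to $A_{\sigma'}$ corresponds to a toric flip-type modification whose effect on the $G$-graph is governed by which side of the wall the third generator lies. The valuation $g(\cdot)$ of the "extra" ray of $\sigma$ is nonnegative on every monomial of $\Gamma_\sigma$ (these are regular monomials and $g$ corresponds to an effective exceptional or boundary divisor), and likewise $g'(\cdot)\ge 0$ on $\Gamma_{\sigma'}$. Applying $g$ to $r_\chi = (m/m')^{a} r'_\chi$ gives $g(r_\chi) = a\,g(m/m') + g(r'_\chi) = -a\,g(m'/m) + g(r'_\chi)$; since $g(r_\chi)\ge 0$ and I will argue $g(r'_\chi)$ is as small as the $G$-graph minimality allows, positivity of $g(m'/m)$ forces $a\ge 0$, hence $a>0$ given $a\neq 0$. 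The clean way to run this last inequality is to invoke that $r'_\chi$ is the $g$-minimal monomial among all monomials sharing its $e$- and $f$-valuations with it (another consequence of the $G$-graph/$\mathcal M$ characterisation), so if $a$ were negative we could strictly decrease $g$ by replacing $r'_\chi$ with $(m'/m)^{|a|}r'_\chi$, a contradiction unless that replacement leaves $\Gamma_{\sigma'}$ — which it cannot, since it keeps $e$- and $f$-valuations fixed and only lowers $g'$. Finally, from $a>0$, reading off exponents gives $m\mid r_\chi$ directly, and rewriting $r'_\chi = (m'/m)^{a} r_\chi$ gives $m'\mid r'_\chi$, completing the proof.

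The main obstacle I anticipate is the divisibility bookkeeping in the middle step: translating "$t=(m/m')^{a}$ as Laurent monomials" plus "$r_\chi,r'_\chi\in R$" into the honest-monomial divisibilities $m\mid r_\chi$, $m'\mid r'_\chi$ requires knowing that no cancellation occurs, i.e.\ that $\gcd(r_\chi,m')=1$ and $\gcd(r'_\chi,m)=1$, which is exactly the content that makes the lemma nontrivial rather than formal. I expect this to follow from the coprimality of $m,m'$ together with the extremal (minimal-generator) property of the $G$-graph monomials, but spelling it out is the delicate part; everything else is linear algebra on the rank-one perpendicular lattice and sign-tracking through the normalisation $g(m'/m)>0$.
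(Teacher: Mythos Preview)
Your first paragraph is correct and is exactly how the paper begins: since $r_\chi$ and $r'_\chi$ share the same $e$- and $f$-valuations, the quotient lies in $\langle e,f\rangle^\perp$, hence equals $(m/m')^a$ for some nonzero integer $a$.

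The genuine gap is in your sign determination. You claim that $r'_\chi$ is the $g$-minimal monomial among regular monomials of its character with the prescribed $e$- and $f$-valuations. This is backwards: the ray $g$ belongs to $\sigma$, not $\sigma'$, so there is no reason whatsoever for $r'_\chi$ (which is defined by conditions at $e,f,g'$) to minimise the $g$-valuation. In fact it is $r_\chi$ that has the minimal $g$-valuation, and even stated that way the claim requires justification. The paper's argument here is both simpler and the one you are really circling around: since $\mathcal{M}^G=\mathcal{O}_Y$, the constant $1$ is a global section of $\mathcal{M}$, and hence so is every regular monomial; in particular $r'_\chi$ is a section of $\mathcal{L}(-D_\chi)$ on $A_\sigma$. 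As $r_\chi$ generates that sheaf on $A_\sigma$, the ratio $r'_\chi/r_\chi$ lies in $\mathcal{O}_Y(A_\sigma)$, so $g(r'_\chi/r_\chi)\ge 0$. With $r'_\chi/r_\chi=(m'/m)^a$ and $g(m'/m)>0$ this gives $a\ge 0$, hence $a>0$. No appeal to an ad hoc ``minimality'' is needed.

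Your final worry about divisibility bookkeeping is also misplaced. Once $a>0$, the identity $r_\chi\cdot m'^{\,a} = r'_\chi\cdot m^{\,a}$ holds among regular monomials; since $m$ and $m'$ are coprime, so are $m^a$ and $m'^{\,a}$, and one reads off $m^a\mid r_\chi$ and $m'^{\,a}\mid r'_\chi$ immediately. You do not need $\gcd(r_\chi,m')=1$ or $\gcd(r'_\chi,m)=1$; the coprimality of $m$ and $m'$ alone suffices.
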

\begin{proof}
Since $r_\chi$ and $r'_\chi$ generate $\mathcal{O}_Y$-module
$\mathcal{L}(-D_\chi)$ on toric affine charts $A_\sigma$ and
$A_{\sigma'}$, respectively, $\frac{r'_\chi}{r_\chi}$ has to be 
invertible on $A_\sigma \cap A_\sigma'$. Therefore the valuation of 
$\frac{r'_\chi}{r_\chi}$ is zero on $E_e$ and $E_f$. In other
words $\frac{r'_\chi}{r_\chi} \in \left<e,f\right>^\perp$. Since 
$\left<e,f\right>^\perp$ is a one-dimensional ray in $M$ generated 
by $\frac{m'}{m}$ we must have $\frac{r'_\chi}{r_\chi} =
(\frac{m'}{m})^k$ for some $k \in \mathbb{Z}$. 

On the other hand, recall that we have $\mathcal{M}^G =
\mathcal{O}_Y$ and therefore $1$ is a global section of $\mathcal{M}$. 
So is any $f \in \regring$ as $\regring$ acts on $\mathcal{M}
\subset K(\mathbb{C}^3)$ by restriction of the natural action 
of $\regring$ on $K(\mathbb{C}^3)$ by multiplication
and so $f = f \cdot 1$. In particular, both $r_\chi$ and
$r'_\chi$ are sections of $\mathcal{L}(-D_\chi)$ on $A_\sigma$. Since 
$r_\chi$ generates $\mathcal{L}(-D_\chi)$ on 
$U_\sigma$ as an $\mathcal{O}_Y$-module we must have 
$\frac{r'_\chi}{r_\chi} \in \mathcal{O}_Y(U_\sigma)$
and hence $g(\frac{r'_\chi}{r_\chi}) = \text{val}_{E_g}(\frac{r'_\chi}{r_\chi}) 
\geq 0$. As we assumed that $g(\frac{m'}{m}) > 0$, we conclude 
that $k \geq 0$. And as by assumption $r_\chi \neq r'_\chi$
we further have $k >0$. The claim now follows. 
\end{proof}

\begin{cor} \label{cor-markings-belong-to-ggraphs}
Let $\chi \in G^\vee$ be the character of $G$ which marks 
$\left< e, f \right>$, i.e. $\chi$ is the common character of $m$ and $m'$. 
Then $r_\chi = m$, whilst $r'_\chi = m'$. 
\end{cor}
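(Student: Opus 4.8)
The plan is to derive the statement from Lemma~\ref{lemma-markings-divide-nonequal-ggraph-pieces} together with two elementary properties of a $G$-graph: that $\Gamma_\sigma$ contains exactly one monomial of each character of $G$ --- namely $r_\chi$ in character $\chi$ --- and that, being the set of monomials of $\regring$ not lying in the ideal $I_\sigma$, it is closed under passing to divisors (if $s\mid r$ in $\regring$ and $r\notin I_\sigma$, then $s\notin I_\sigma$ since $I_\sigma$ is an ideal).

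The first thing I would note is that Lemma~\ref{lemma-markings-divide-nonequal-ggraph-pieces} carries the hypothesis $r_\chi\neq r'_\chi$, which is not a priori available for the character $\chi$ marking $\left<e,f\right>$; so the lemma should be applied not to $\chi$ but to \emph{some} character for which $\Gamma_\sigma$ and $\Gamma_{\sigma'}$ are already known to differ. Such a character exists: the torus-fixed points $E_\sigma$ and $E_{\sigma'}$ are distinct points of $Y=G\text{-}\hilb\,\mathbb{C}^3$, hence parametrise distinct $G$-clusters, hence $I_\sigma\neq I_{\sigma'}$; as both are monomial ideals, $\Gamma_\sigma\neq\Gamma_{\sigma'}$, so we may fix $\psi\in G^\vee$ with $r_\psi\neq r'_\psi$.

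Applying Lemma~\ref{lemma-markings-divide-nonequal-ggraph-pieces} with $\chi$ replaced by $\psi$ yields $m\mid r_\psi$ and $m'\mid r'_\psi$ in $\regring$. Since $r_\psi\in\Gamma_\sigma$ and $\Gamma_\sigma$ is closed under divisors, $m\in\Gamma_\sigma$; but $m$ has character $\chi$ (it is the common character of $m$ and $m'$) and $\Gamma_\sigma$ contains a unique monomial of character $\chi$, namely $r_\chi$, so $m=r_\chi$. The same argument across the wall --- $m'\mid r'_\psi\in\Gamma_{\sigma'}$, closed under divisors, $m'$ of character $\chi$ --- gives $m'=r'_\chi$. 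As a by-product, since $m$ and $m'$ are coprime and distinct, this recovers $r_\chi\neq r'_\chi$, so in hindsight the hypothesis of Lemma~\ref{lemma-markings-divide-nonequal-ggraph-pieces} did hold for $\chi$ itself.

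I expect the only real subtlety --- the ``main obstacle'' --- to be the choice of the auxiliary character $\psi$: one must resist applying the lemma to $\chi$ directly and instead exploit that the two adjacent charts carry genuinely different $G$-clusters, so that their $G$-graphs disagree in at least one character. Once that is in place the rest is immediate from the definition of a $G$-graph.
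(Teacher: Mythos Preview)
Your proof is correct and follows essentially the same approach as the paper: pick an auxiliary character $\psi$ (the paper calls it $\chi'$) where the two $G$-graphs differ, apply Lemma~\ref{lemma-markings-divide-nonequal-ggraph-pieces} to get $m\mid r_\psi$ and $m'\mid r'_\psi$, then use closure of $G$-graphs under divisors together with uniqueness of the $\chi$-monomial in each $G$-graph. The paper's version is terser --- it leaves the final identification ``$m$ has character $\chi$, hence $m=r_\chi$'' implicit in ``the claim follows'' --- but the argument is the same.
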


\begin{proof}
There has to exist $\chi' \in G^\vee$ for which $r_{\chi'} \neq
r'_{\chi'}$ as otherwise the $G$-clusters parametrised
by the torus fixed points $E_\sigma$ and $E_\sigma'$ would be
isomorphic. By Lemma $\ref{lemma-markings-divide-nonequal-ggraph-pieces}$ 
we must then have $m | r_{\chi'}$ and $m' | r_{\chi'}$. But any regular
monomial which divides an element of a $G$-graph must itself 
belong to that $G$-graph, as the compliment of a $G$-graph in 
a set of regular monomials is the monomial part of an ideal in 
$\regring$. The claim follows.
\end{proof}

\begin{cor} \label{cor-primitivity-of-marking-ratios}
There doesn't exist a $G$-invariant Laurent monomial 
$\frac{m_1}{m'_1} \neq 1$ such that $m_1$ and $m'_1$ strictly 
divide $m$ and $m'$ respectively.  
\end{cor}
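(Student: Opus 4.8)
The plan is to argue by contradiction, using Corollary~\ref{cor-markings-belong-to-ggraphs} together with two elementary facts about a $G$-graph that were already invoked above: a $G$-graph contains exactly one monomial in each character class (being a monomial basis of the regular representation of $G$), and it is closed under passing to regular monomial divisors (its complement among the regular monomials is the monomial part of the defining ideal $I_\sigma$).

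So I would suppose that a nontrivial $G$-invariant Laurent monomial $\tfrac{m_1}{m_1'}$ exists, with $m_1$ a regular monomial strictly dividing $m$ and $m_1'$ a regular monomial strictly dividing $m'$. Being $G$-invariant, the ratio forces $m_1$ and $m_1'$ to carry the same character, say $\psi \in G^\vee$. By Corollary~\ref{cor-markings-belong-to-ggraphs} we have $r_\chi = m$ and $r'_\chi = m'$, hence $m_1 \mid r_\chi$ and $m_1' \mid r'_\chi$; closedness of $\Gamma_\sigma$ and $\Gamma_{\sigma'}$ under regular monomial divisors then places $m_1 \in \Gamma_\sigma$ and $m_1' \in \Gamma_{\sigma'}$. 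Since each of these $G$-graphs has a unique monomial of character $\psi$, namely $r_\psi$ and $r'_\psi$ respectively, I would identify $m_1 = r_\psi$ and $m_1' = r'_\psi$.

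The final step is to feed this back into Lemma~\ref{lemma-markings-divide-nonequal-ggraph-pieces}. Since $\tfrac{m_1}{m_1'} \neq 1$ we have $r_\psi = m_1 \neq m_1' = r'_\psi$, so that lemma, applied with $\psi$ in the place of the character $\chi$ appearing there (the standing assumption $g(\tfrac{m'}{m}) > 0$ being the one fixed in the set-up), gives $m \mid r_\psi = m_1$; combined with $m_1 \mid m$ this forces $m_1 = m$, contradicting strict divisibility. I do not expect a genuine obstacle here: the argument is short, and the only points needing a little care are checking that each generator $r_\psi$ really carries character $\psi$, so that the ``one monomial per character'' bookkeeping inside a $G$-graph is legitimate, and noting that $m_1, m_1'$ may be taken to be honest regular monomials (being divisors of the coprime pair $m, m'$, they are automatically coprime as well) --- both facts being immediate from the constructions recalled in Section~\ref{section-G-clusters-and-g-graphs}.
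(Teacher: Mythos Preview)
Your argument is correct and follows essentially the same route as the paper: assume such $\tfrac{m_1}{m'_1}$ exists, use Corollary~\ref{cor-markings-belong-to-ggraphs} and closure of $G$-graphs under monomial divisors to place $m_1 \in \Gamma_\sigma$, $m'_1 \in \Gamma_{\sigma'}$, then apply Lemma~\ref{lemma-markings-divide-nonequal-ggraph-pieces} to the common character of $m_1$ and $m'_1$ to force $m \mid m_1$, contradicting strict divisibility. The only difference is cosmetic: you make the identification $m_1 = r_\psi$, $m'_1 = r'_\psi$ explicit before invoking the lemma, whereas the paper applies the lemma directly to the pair $m_1, m'_1$ once they are known to lie in the respective $G$-graphs.
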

\begin{proof}
Suppose such $\frac{m_1}{m'_1}$ were to exist. By Corollary
$\ref{cor-markings-belong-to-ggraphs}$ we have $m \in \Gamma_\sigma$ 
and as $m_1 \mid m$ we must also have $m_1 \in \Gamma_\sigma$. 
Similarly, we must have $m'_1 \in \Gamma_{\sigma'}$. Since
$\frac{m_1}{m'_1}$ is $G$-invariant the monomials $m_1$ and $m'_1$
have to be of the same character and as $\frac{m_1}{m'_1} \neq 1$
we must have $m_1 \neq m'_1$. But then by Lemma
$\ref{lemma-markings-divide-nonequal-ggraph-pieces}$ we 
must have $m \mid m_1$ and $m' \mid m'_1$, contradicting the 
assumption that $m_1$ and $m'_1$ strictly 
divide $m$ and $m'$.
\end{proof}

\subsection{The McKay quiver of $G$ and the sink-source graphs}
\label{section-mckay-quiver-and-sink-source-graphs}

A detailed account of this is given in
\cite{CautisLogvinenko-ADerivedApproachToGeometricMcKayCorrespondence},
Section 4. Below we briefly summarize the essentials. 

The action of $G$ on $\regring$ is obtained from the action of $G$ on 
$\mathbb{C}^3$ by setting $g \cdot m (\bf v\rm) = m (g^{-1} \cdot \bf
v\rm)$ for all $m \in \regring$ and $\bf v\rm \in \mathbb{C}^3$.  
For any regular monomial $m \in \regring$ denote by $\kappa(m)$ the
character with which $G$ acts on $m$. Quite generally, to any finite 
subgroup $G \subset \gl_n(C)$ we can associate a quiver $Q(G)$ called 
\it the McKay quiver of $G$\rm. In our case of a finite abelian subgroup of
$\gsl_3(\mathbb{C})$ the quiver $Q(G)$ has as its vertices the
characters $\chi \in G^\vee$ of $G$ and from every vertex $\chi$
there are three arrows going to $\kappa(x) \chi$, $\kappa(y) \chi$ and
$\kappa(z) \chi$. We denote these arrows by $(\chi,x)$, $(\chi,y)$
and $(\chi,z)$ and say that they are $x$-, $y$- and $z$-oriented,
respectively.

There exists a standard planar embedding of $Q(G)$ into a real two
dimensional torus first constructed by Craw and Ishii in
\cite{Craw-Ishii-02}. We use the version of it detailed in 
\cite{CautisLogvinenko-ADerivedApproachToGeometricMcKayCorrespondence},
Section 4.1. The torus, which we denote by $T_G$, is tesselated
by the embedded $Q(G)$ into $2|G|$ regular triangles. Locally, this
tesselation looks as depicted on Figure \ref{figure-07}. 
\begin{figure}[h] \begin{center}
\includegraphics[scale=0.20]{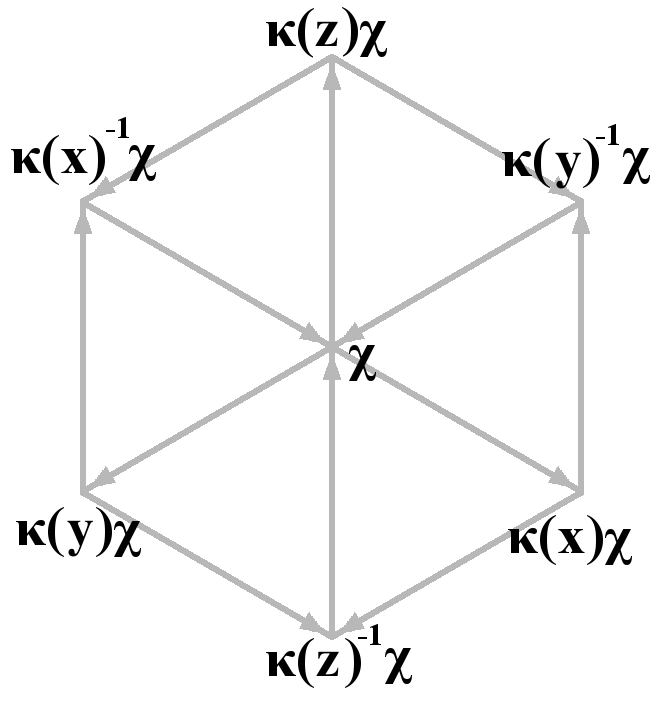} \end{center}
\caption{\label{figure-07} The tesselation of $T_G$ by $Q(G)$ (locally)}
\end{figure}
When depicting $T_G$ in diagrams we draw its fundamental domain 
in $\mathbb{R}^2$. For an example see Figure \ref{figure-08} where
we give a depiction of the McKay quiver of $G = \frac{1}{13}(1,5,7)$ embedded
into $T_G$. $\frac{1}{13}(1,5,7)$ is a common shorthand for the image
in $\gsl_3(\mathbb{C})$ of the group of 13th roots of unity  
under the embedding $\xi
\mapsto 
\left( \begin{smallmatrix}
\xi^1 & & \\
& \xi^5 & \\
& & \xi^7
\end{smallmatrix} \right) $.
\begin{figure}[h] \begin{center}
\includegraphics[scale=0.30]{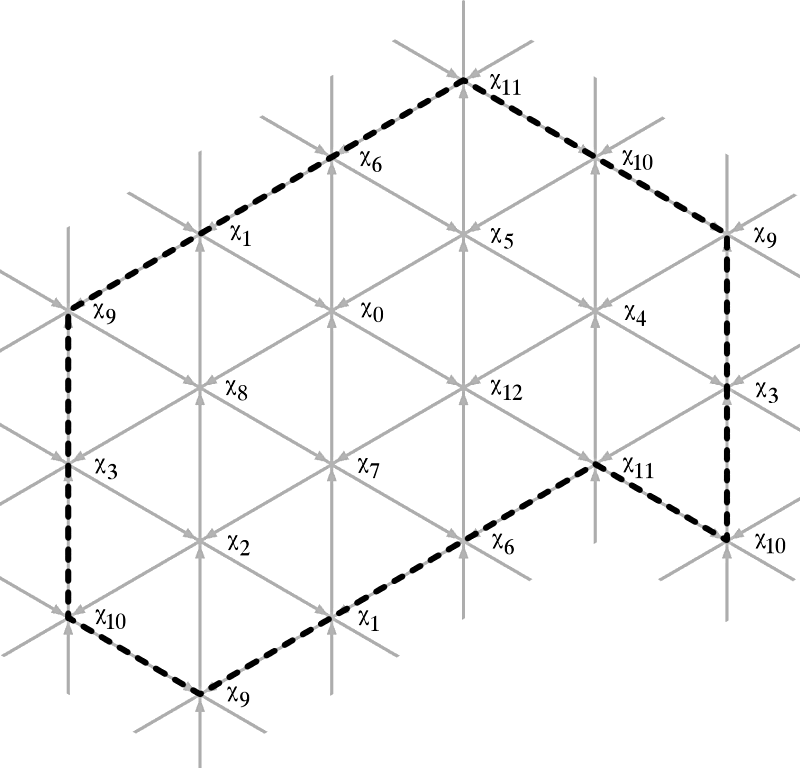} \end{center}
\caption{\label{figure-08} The McKay quiver of $G = \frac{1}{13}(1,5,7)$}
\end{figure}

The importance of the McKay quiver for us is due to the fact 
that $G$-clusters are a special case of a more general concept of
\it $G$-constellations\rm, which are coherent $G$-sheaves on 
$\mathbb{C}^3$ whose global sections are the regular representation 
$\regrep$. The category of $G$-constellations is equivalent
to the category of $\twalg$-modules whose underlying
$G$-representation is $\regrep$ and the latter category is 
equivalent to the category of representations of the McKay quiver into 
the graded vector space $\regrep =  \oplus_{\chi} \mathbb{C}_\chi$ 
where $\mathbb{C}_\chi$ is a copy of $\mathbb{C}$ on which $G$ acts by
$\chi$. This equivalence enables us to define
for the universal family $\mathcal{M}$ of $G$-clusters
(or, more generally, for any 
$\gnat$-family\footnote{A \em $\gnat$-family is a flat family of
$G$-constellations satisfying certain geometrical naturality criteria. See
\cite{Logvinenko-DerivedMcKayCorrespondenceViaPureSheafTransforms},
\S3.2.}) its \it associated representation $Q(G)_\mathcal{M}$ \rm of 
the McKay quiver over $Y$\rm (see
\cite{CautisLogvinenko-ADerivedApproachToGeometricMcKayCorrespondence}, 
Section 4.2). To any arrow $(\chi, x)$ of $Q(G)$ 
in this representation corresponds a map $\alpha_{\chi,x}$ from
$\mathcal{L}(-D_{\chi^{-1}})$, the $\chi$-eigensheaf of $\mathcal{M}$, 
to $\mathcal{L}(-D_{\kappa(x_k)^{-1} \chi^{-1}})$, the $\kappa(x)
\chi$-eigensheaf of $\mathcal{M}$. This map is given by $s \mapsto x \cdot s$.
Denote by $B_{\chi,x}$ the locus in $Y$ where map 
$\alpha_{\chi,x}$ vanishes. It follows from 
\cite{CautisLogvinenko-ADerivedApproachToGeometricMcKayCorrespondence}, 
Propositions $4.4$ and $4.5$ that $B_{\chi,x}$ is an effective
divisor of form $\sum_{e \in \mathfrak{E}} b_e E_e$ where 
$b_e \in \{0,1\}$. We say that
arrow $(\chi,x)$ vanishes along $E_e$ if so does the corresponding 
map $\alpha_{\chi,x}$ in the associated representation 
$Q(G)_{\mathcal{M}}$, i.e. $E_e \subset B_{\chi,x}$. Similarly for the
arrows $(\chi, y)$ and $(\chi,z)$.

Let $E \in \Except{Y}$. For every character $\chi \in G^\vee$ 
we classify the corresponding vertex of $Q(G)$ according 
to which arrows in the subquiver $\hex(\chi)$, formed by the six
triangles containing $\chi$ as per Figure $\ref{figure-07}$, 
vanish along the divisor $E$ and which do not
(\cite{CautisLogvinenko-ADerivedApproachToGeometricMcKayCorrespondence},
Prps. 4.7). On Figures \ref{figure-09} - \ref{figure-13} we list all 
possible cases, drawing in black the arrows which vanish and in grey 
the arrows which don't. These cases divide into four basic classes :
the \it charges\rm, the \it sources\rm, the \it sinks \rm and the \it tiles\rm.
The reason for this choice of names is that charge vertices always occur in
$Q(G)$ in straight lines propagating from a source vertex to a sink
vertex. An $x$-oriented charge propagates along $x$-oriented arrows of
$Q(G)$ and similarly for $y$ and $z$. A type $(1,0)$-charge propagates
in the direction of the arrows, while a type $(0,1)$-charge propagates
against the direction of the arrows. A type $(a,b)$-source (resp.
sink) emits (resp.  receives) $a$ charges of type $(1,0)$ and
$b$-charges of type $(0,1)$. 
\begin{figure}[h]
\begin{center} \includegraphics[scale=0.090]{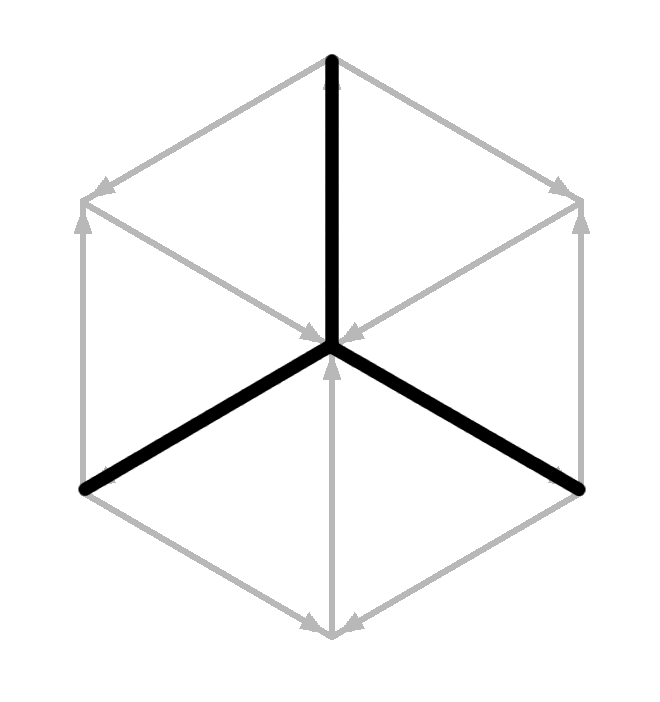}
\includegraphics[scale=0.090]{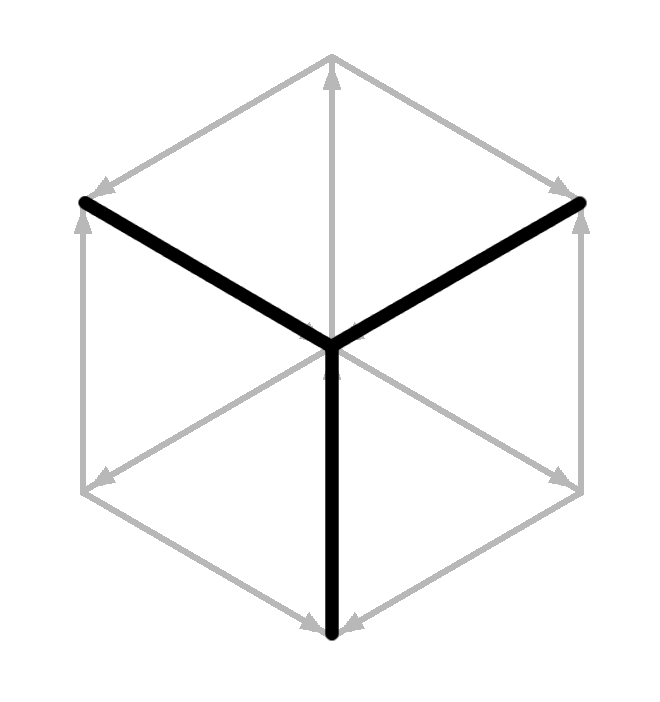}
\caption{\label{figure-09} The $(3,0)$-sink and the $(0,3)$-sink.}
\end{center} \begin{center}
\includegraphics[scale=0.090]{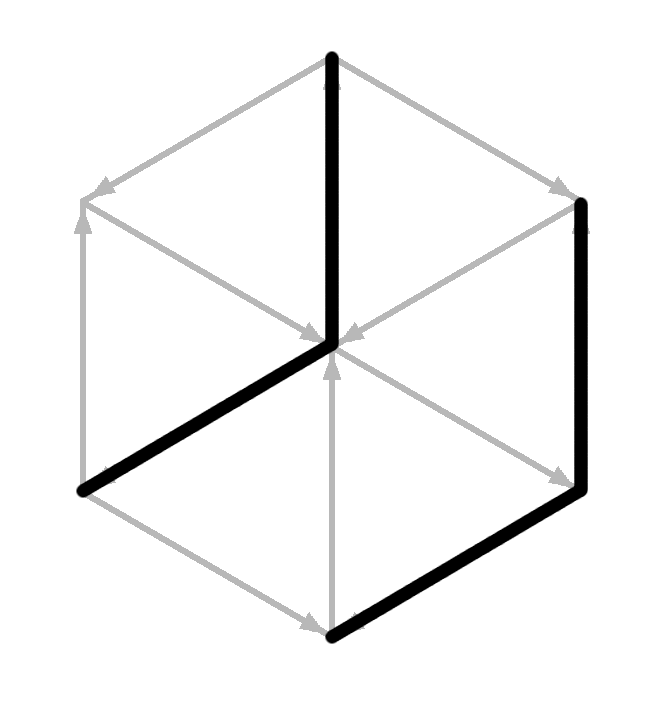}
\includegraphics[scale=0.090]{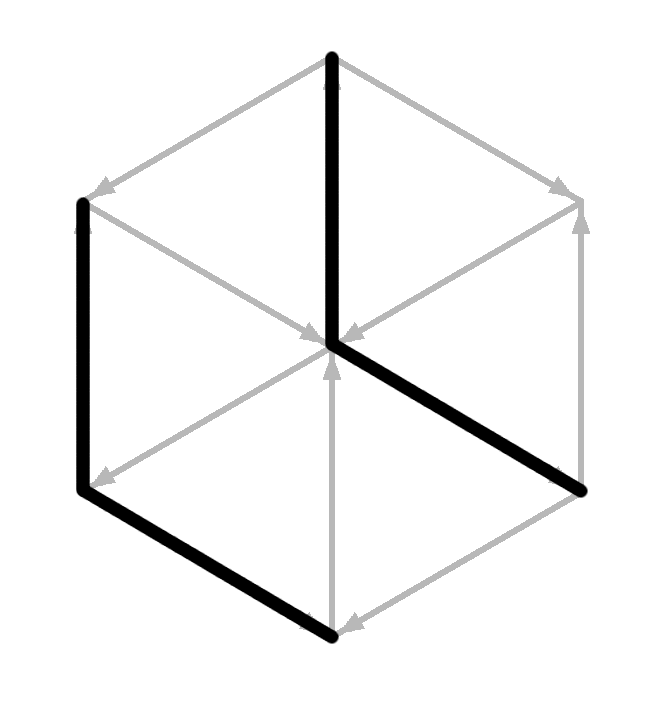}
\includegraphics[scale=0.090]{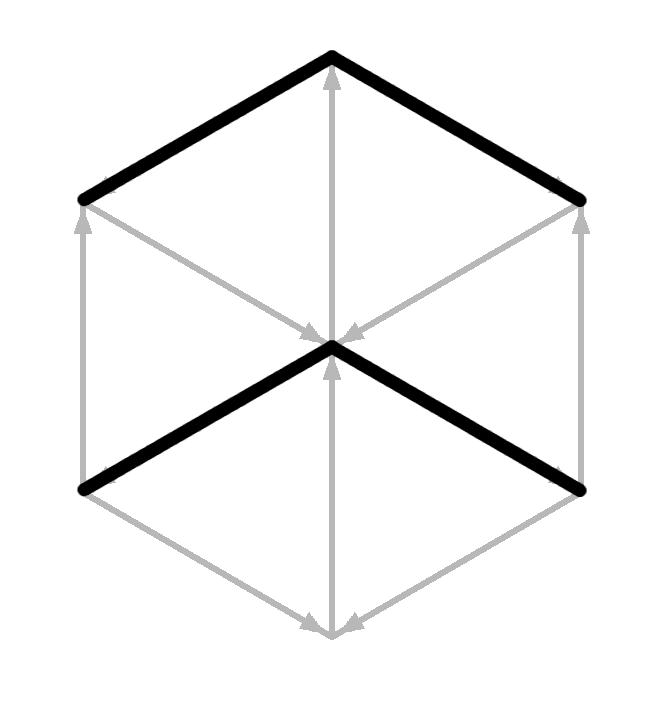}
\includegraphics[scale=0.090]{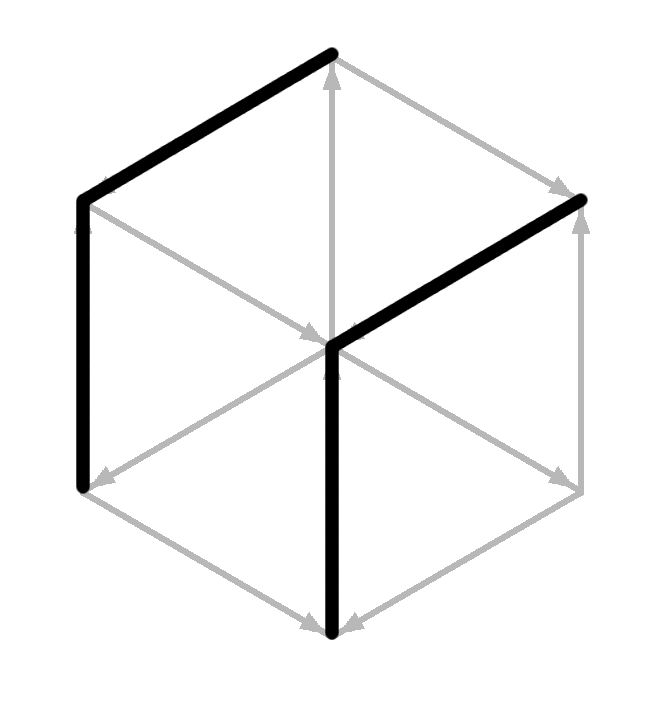}
\includegraphics[scale=0.090]{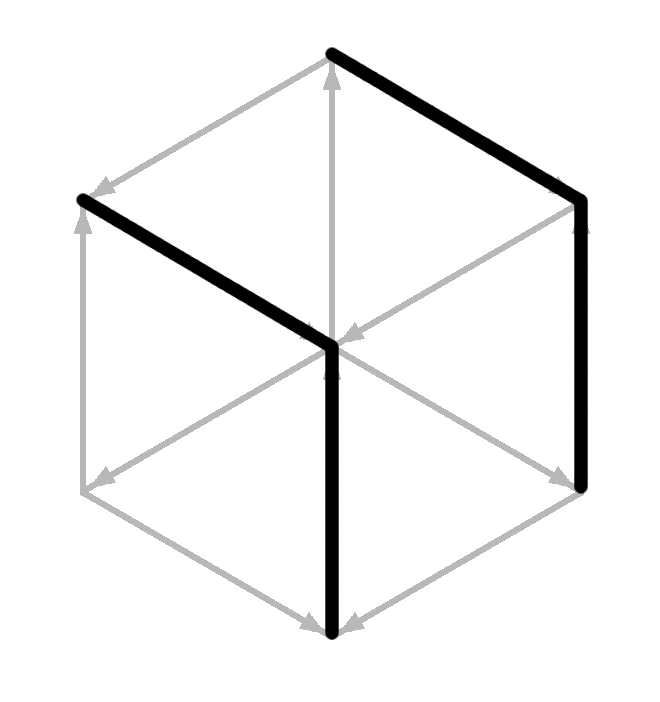}
\includegraphics[scale=0.090]{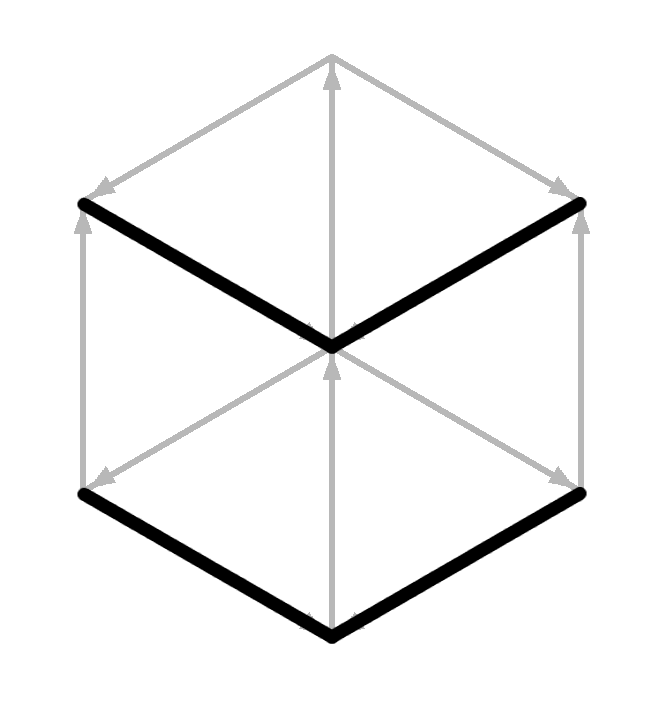}
\caption{\label{figure-10} The $x$-, $y$- and $z$-
$(1,0)$-charges, the $x$-, $y$- and $z$- $(0,1)$-charges}
\end{center} \begin{center}
\includegraphics[scale=0.090]{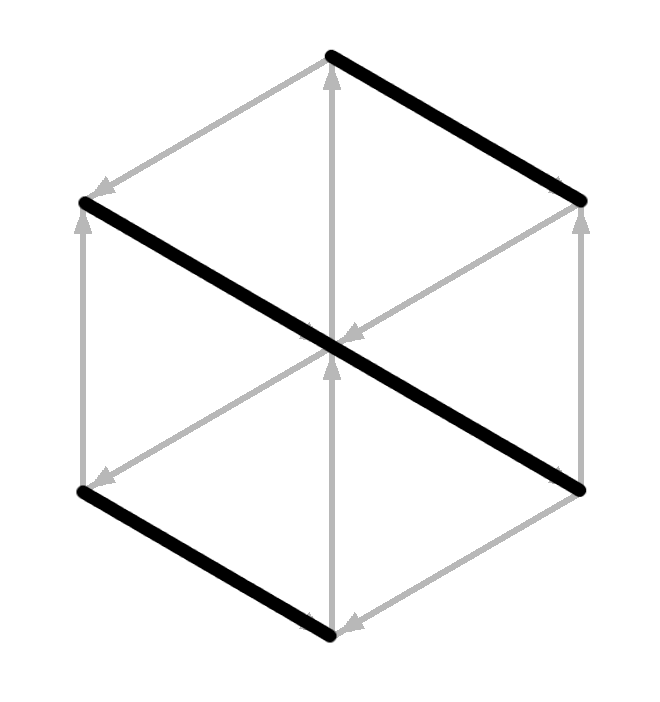}
\includegraphics[scale=0.090]{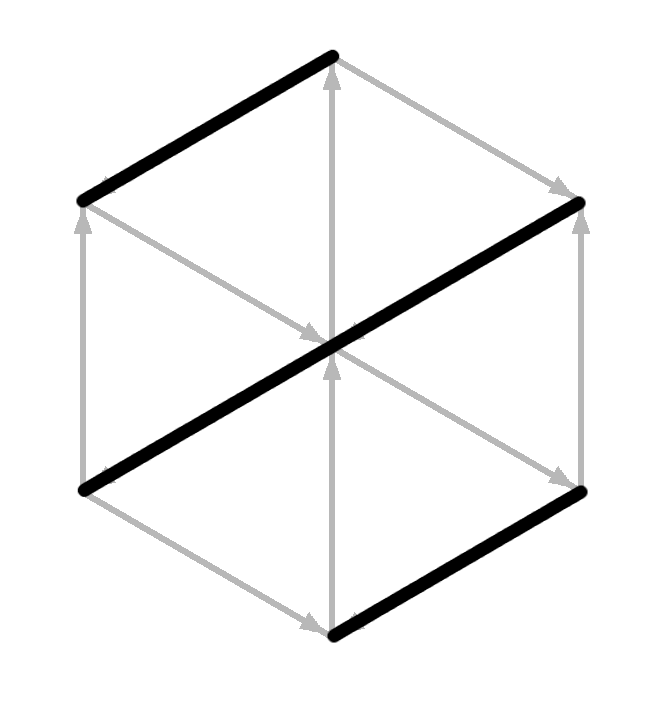}
\includegraphics[scale=0.090]{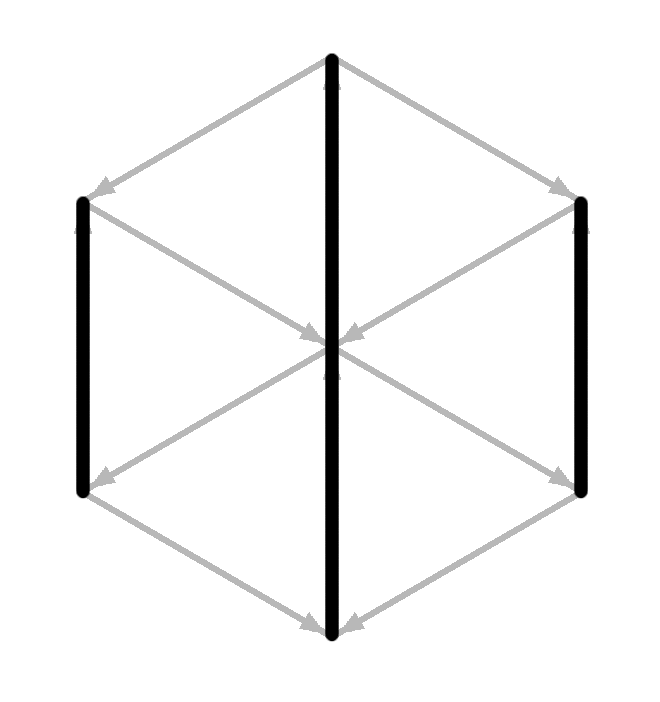}
\caption{\label{figure-11} The $x$-tile, the $y$-tile and the
$z$-tile.} \end{center} \begin{center}
\includegraphics[scale=0.090]{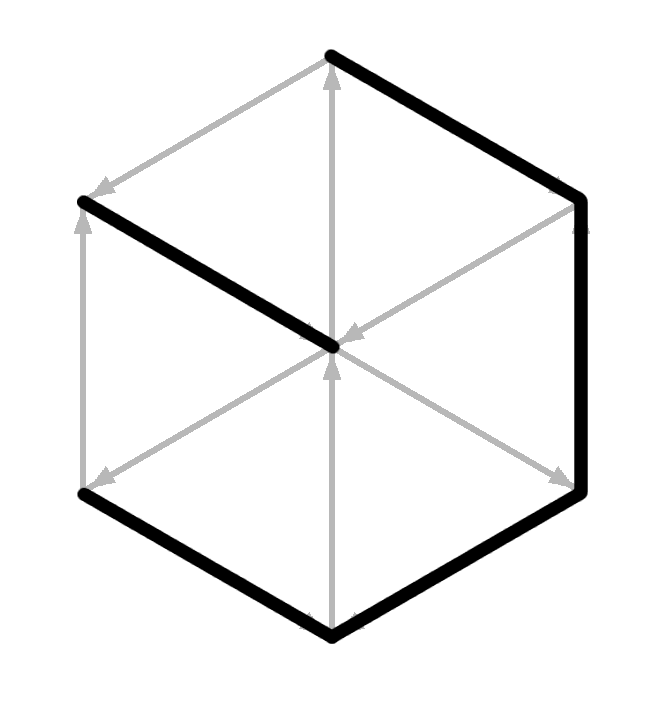}
\includegraphics[scale=0.090]{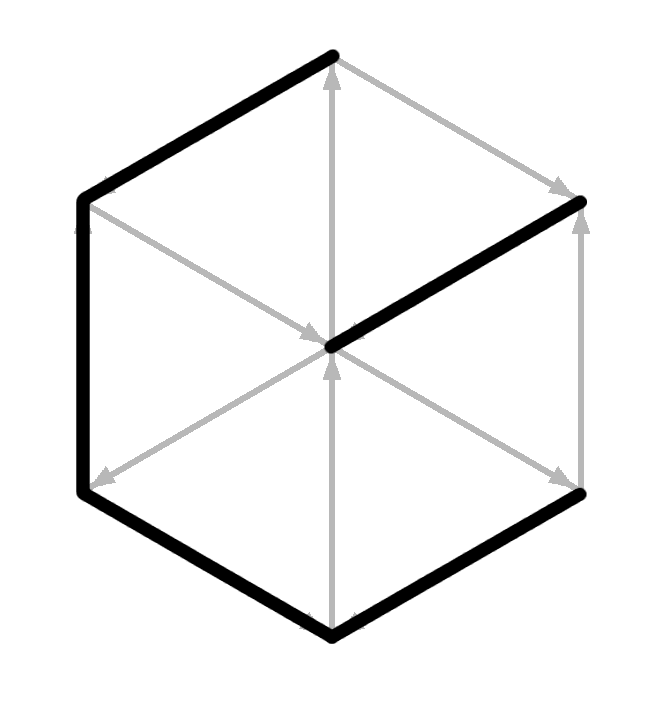}
\includegraphics[scale=0.090]{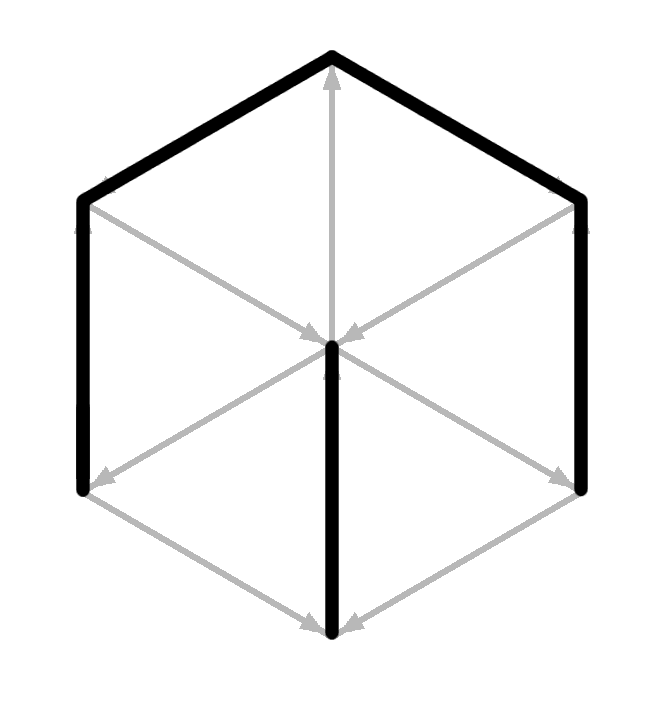}
\includegraphics[scale=0.090]{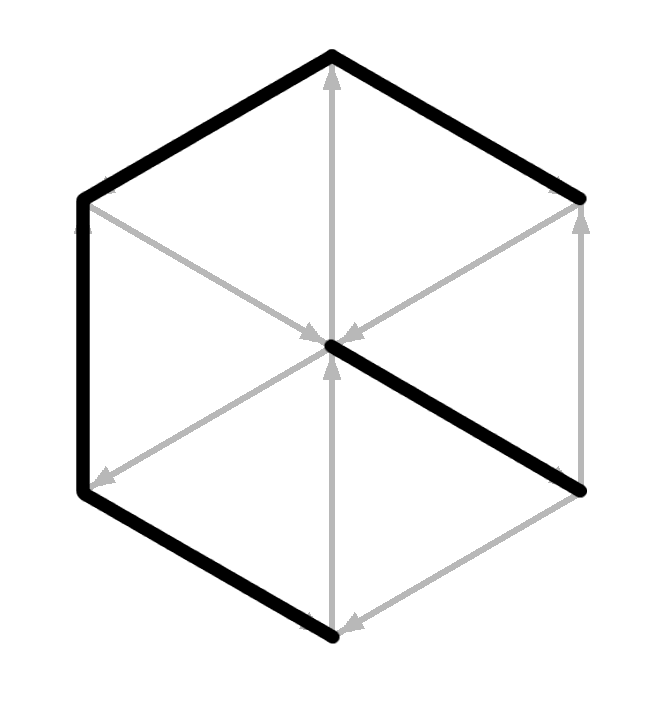}
\includegraphics[scale=0.090]{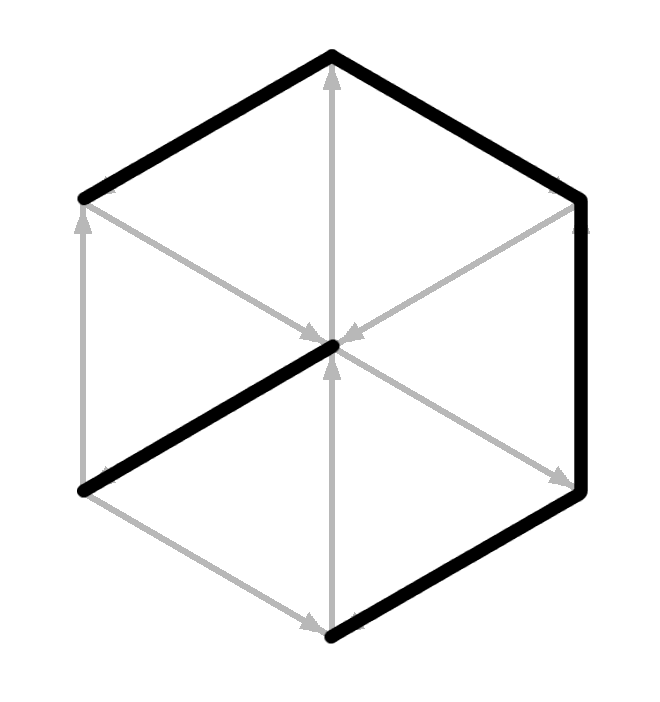}
\includegraphics[scale=0.090]{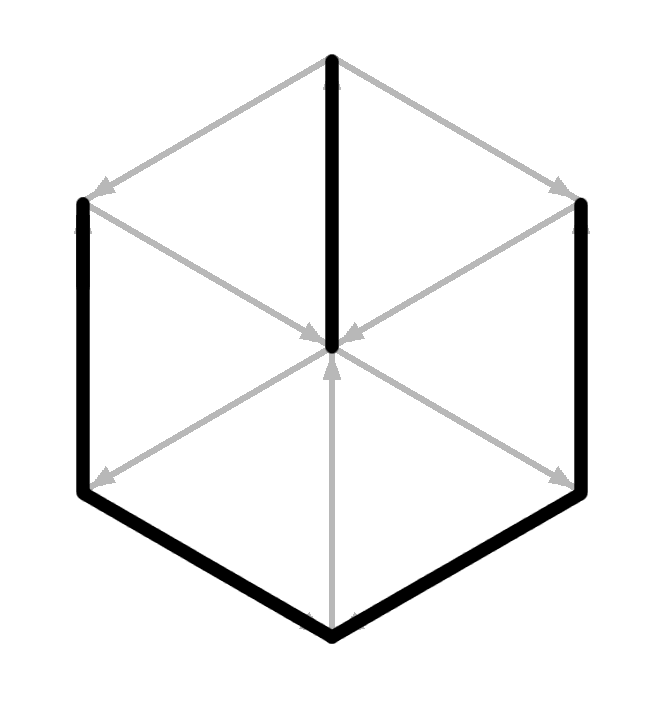}
\caption{\label{figure-12} The $x$-, $y$- and $z$-
$(1,2)$-sources, the $x$-, $y$- and $z$- $(2,1)$-sources.}
\end{center} \begin{center}
\includegraphics[scale=0.090]{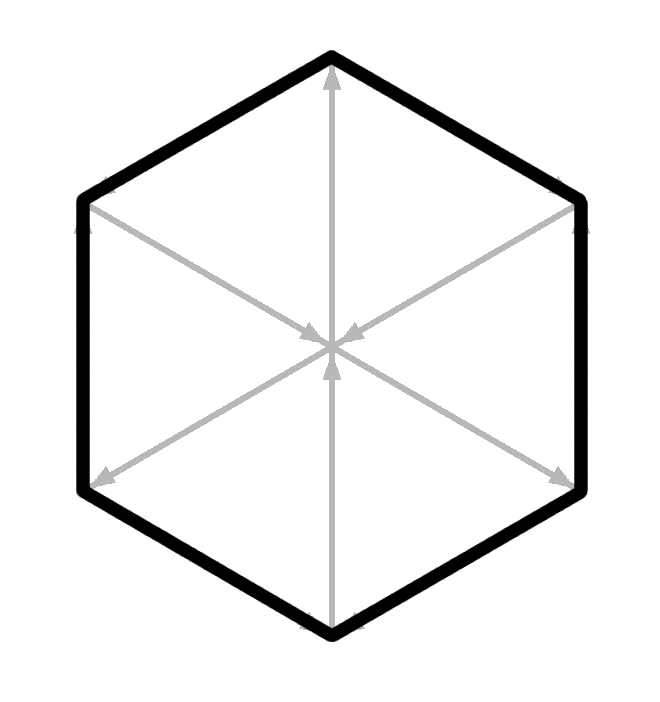}
\caption{\label{figure-13} The $(3,3)$-source} \end{center}
\end{figure} 

The \it sink-source graph \rm $SS_{\mathcal{M}, E}$ is a graph drawn 
on top of $Q(G)$ whose vertices are the sinks and the sources and whose 
edges are the charge lines. The torus $T_G$ is then 
divided up by $SS_{\mathcal{M}, E}$ into several regions with 
all vertices interior to any one region being tiles of same orientation. 
It turns out 
(\cite{CautisLogvinenko-ADerivedApproachToGeometricMcKayCorrespondence},
Proposition $4.14$ and then as per Corollary $4.16$) that there are
only three possible shapes that $SS_{\mathcal{M}, E}$ can take and we depict 
them on Figures \ref{figure-14a} - \ref{figure-14c} indicating by thin 
grey lines the orientation of tiles in that region. In case of Figure 
$\ref{figure-14b}$
the rotations of the depicted shape by $\frac{2\pi}{3}$ and 
$\frac{4\pi}{3}$ are also possible and since one of the tile regions
is non-contractible we indicated by a dotted line the fundamental
domain of $Q(G)$. \begin{figure}[h] \centering 
\subfigure[A single $(3,3)$-source] { \label{figure-14a}
\includegraphics[scale=0.11]{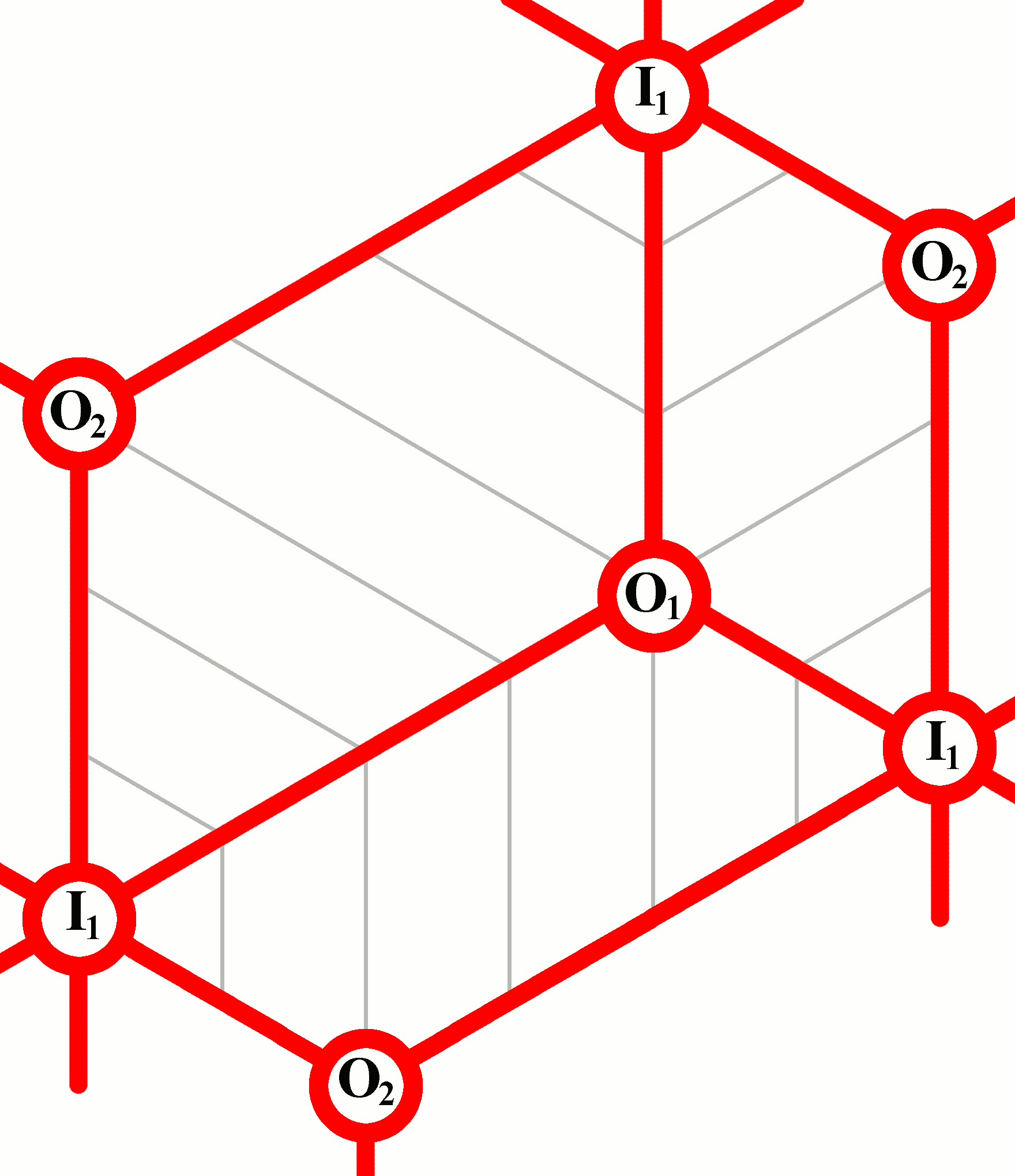} } 
\subfigure[One $(1,2)$-source and one
$(2,1)$-source] { \label{figure-14b}
\includegraphics[scale=0.12]{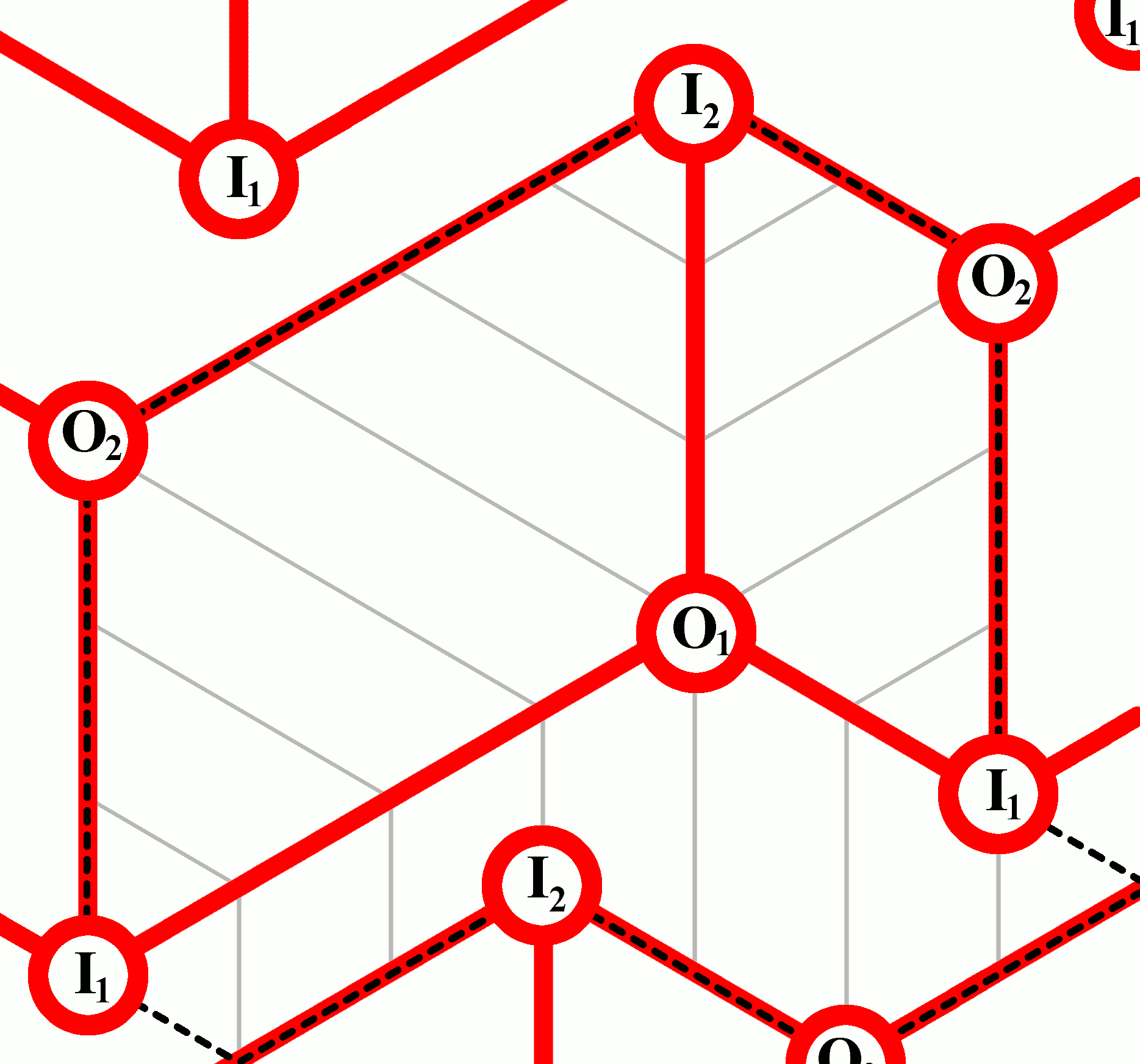} } 
\subfigure[Three $(2,1)$-sources] { \label{figure-14c}
\includegraphics[scale=0.12]{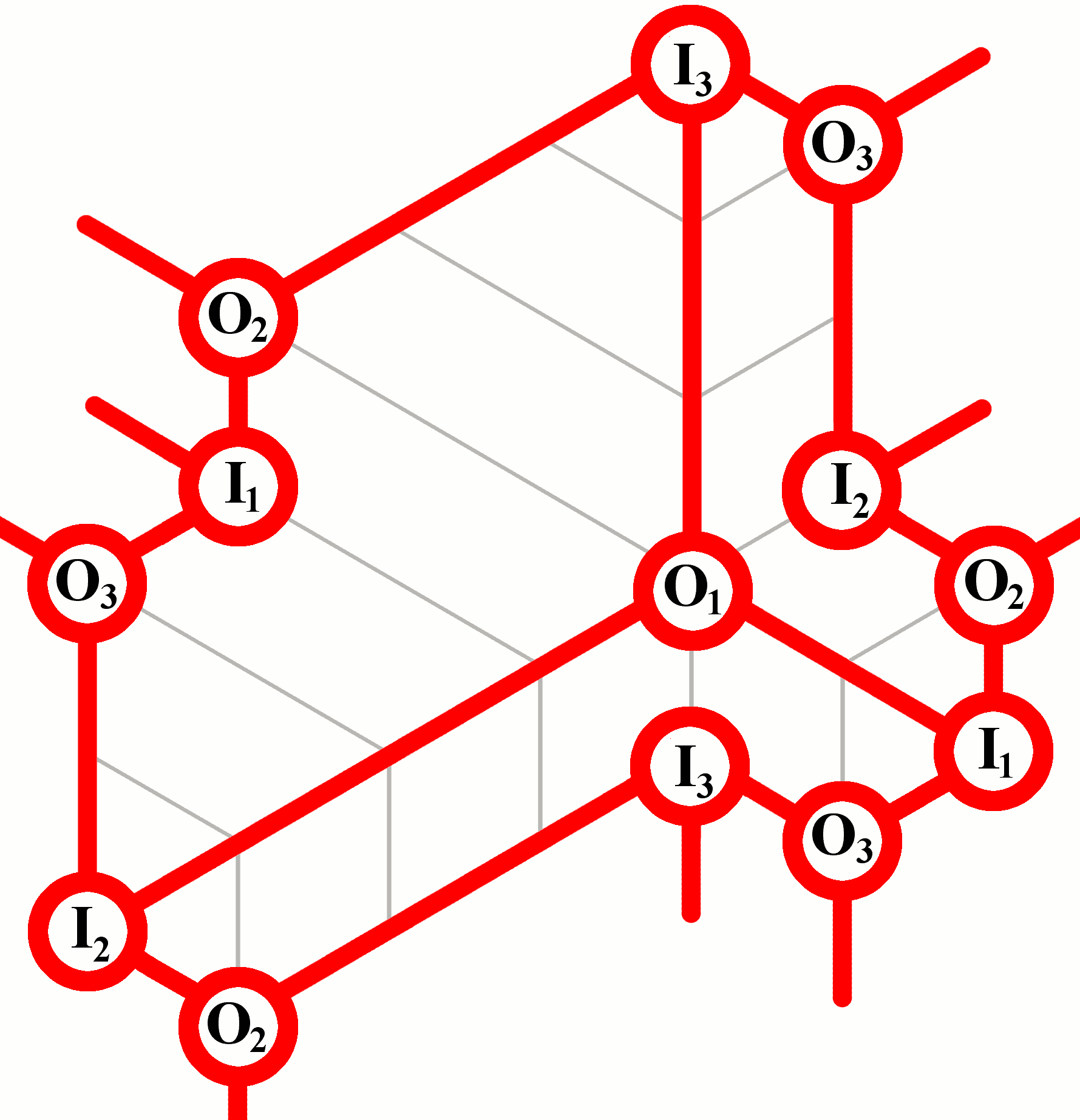} } \caption{Sink-source
graphs in the case of a single $(0,3)$-sink} \label{figure-14}
\end{figure}

It is worth noting that sink-source graph $SS_{\mathcal{M}, E}$ 
determines completely the divisor $E$ which gave rise to it. This
is because it completely determines which arrows of
$Q(G)$ do and which do not vanish along $E$: no arrow 
which belongs to one of the charge lines vanishes along $E$ and within each 
of the regions into which $SS_{\mathcal{M}, E}$ divides up $T_G$ the
arrows that vanish are those that have the same orientation as the tile
vertices of that region. Then the following result can be applied to 
determine the toric coordinates of $e \in \mathfrak{E}$ which 
corresponds to $E$:
\begin{lemma} \label{lemma-number-of-diamonds}
Let $e \in \mathfrak{E}$. Suppose the total number of $x$-oriented arrows
of $Q(G)$ which vanish along $E_e$ in $Q(G)_{\mathcal{M}}$ is $a$, 
the total number of $y$-oriented arrows 
is $b$ and the total number of $z$-oriented arrows is $c$. 
Then $e = \frac{1}{|G|}(a,b,c) \in L \subset \mathbb{Q}^3$.
\end{lemma}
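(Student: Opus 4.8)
The plan is to compute the valuation $\text{val}_{E_e}$ of each coordinate monomial $x$, $y$, $z$ — these valuations are precisely the coordinates $e_1, e_2, e_3$ of $e$ in $L \subset \mathbb{Q}^3$ — by counting how the total vanishing of the corresponding arrows in $Q(G)_{\mathcal{M}}$ accumulates around the McKay quiver. First I would recall from \S\ref{section-mckay-quiver-and-sink-source-graphs} that the arrow $(\chi, x)$ of $Q(G)$ corresponds in the associated representation to the multiplication map $\alpha_{\chi,x}\colon \mathcal{L}(-D_{\chi^{-1}}) \to \mathcal{L}(-D_{\kappa(x)^{-1}\chi^{-1}})$, $s \mapsto x\cdot s$, and that by \cite{CautisLogvinenko-ADerivedApproachToGeometricMcKayCorrespondence}, Props.\ 4.4--4.5, the vanishing divisor $B_{\chi,x}$ has all coefficients in $\{0,1\}$, so ``$(\chi,x)$ vanishes along $E_e$'' simply means $\text{val}_{E_e}(\alpha_{\chi,x}) = 1$ and otherwise it is $0$. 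Hence $a$, the number of $x$-oriented arrows vanishing along $E_e$, equals $\sum_{\chi \in G^\vee} \text{val}_{E_e}(\alpha_{\chi,x})$.

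The key computation is then: identifying each $\mathcal{L}(-D_\chi)$ with its image in $K(\mathbb{C}^3)$ as in \S\ref{section-G-clusters-and-g-graphs}, on a fixed affine chart $A_\sigma$ the generator of $\mathcal{L}(-D_{\chi^{-1}})$ is the monomial $r_{\chi^{-1}}$ and that of $\mathcal{L}(-D_{\kappa(x)^{-1}\chi^{-1}})$ is $r_{\kappa(x)^{-1}\chi^{-1}}$, and the map $\alpha_{\chi,x}$ sends the first generator to $x \cdot r_{\chi^{-1}}$. Thus the local expression of $\alpha_{\chi,x}$ is the monomial $\frac{x\, r_{\chi^{-1}}}{r_{\kappa(x)^{-1}\chi^{-1}}}$, so
\begin{align*}
\text{val}_{E_e}(\alpha_{\chi,x}) = e\!\left(x\right) + e\!\left(r_{\chi^{-1}}\right) - e\!\left(r_{\kappa(x)^{-1}\chi^{-1}}\right),
\end{align*}
where I write $e(m)$ for $\text{val}_{E_e}(m) = \frac{1}{|G|}\text{val}_{E_e}(m^{|G|})$ as in the excerpt. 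Now sum over all $\chi \in G^\vee$: the $x$-oriented arrows out of the vertices form, under $\chi^{-1} \mapsto \kappa(x)^{-1}\chi^{-1}$, a permutation of $G^\vee$ (a disjoint union of cycles along the $\kappa(x)$-orbits), so the terms $e(r_{\chi^{-1}}) - e(r_{\kappa(x)^{-1}\chi^{-1}})$ telescope around each cycle and cancel entirely. What remains is
\begin{align*}
a = \sum_{\chi \in G^\vee} \text{val}_{E_e}(\alpha_{\chi,x}) = |G^\vee| \cdot e(x) = |G| \cdot e_1,
\end{align*}
whence $e_1 = a/|G|$, and symmetrically $e_2 = b/|G|$, $e_3 = c/|G|$, which is exactly the assertion $e = \frac{1}{|G|}(a,b,c)$.

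The main point requiring care is the bookkeeping in the telescoping sum: one must check that the $G^\vee$-set of $x$-oriented arrows really is acted on freely and transitively within each $\kappa(x)$-coset by the shift $\chi \mapsto \kappa(x)\chi$ (it is, since $G^\vee$ is a group), so that the intermediate-generator terms pair up and vanish with no leftover boundary contribution — i.e.\ there is no issue of the sum being over a path rather than a cycle. One should also note that the count $a$ is independent of which chart $A_\sigma$ is used to write the $r_\chi$: each $\text{val}_{E_e}(\alpha_{\chi,x})$ is intrinsic (it is $E_e \subset B_{\chi,x}$ or not), so although the individual monomials $r_\chi$ depend on $\sigma$, the displayed identity holds chart-by-chart and in particular the final equality $a = |G|\, e_1$ does not depend on that choice. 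No genuine obstacle is expected beyond this combinatorial care; the lemma is essentially a telescoping/conservation-of-flow statement on $Q(G)_{\mathcal{M}}$.
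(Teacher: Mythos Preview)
Your proof is correct and is essentially the same argument as the paper's: the paper sums the vanishing divisors $B_{\chi,x} = D_{\chi^{-1}} + (x) - D_{\kappa(x)^{-1}\chi^{-1}}$ over $\chi \in G^\vee$, observes that the $D$-terms cancel telescopically leaving $|G|(x)$, and reads off the multiplicity of $E_e$. Your version simply evaluates this identity at $E_e$ from the start (writing $e(r_{\chi^{-1}})$ for the coefficient of $E_e$ in $D_{\chi^{-1}}$) and is otherwise identical, including the same telescoping step.
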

\begin{proof}
Consider the sum of the vanishing divisors of all $x$-oriented arrows 
of $Q(G)$: 
\begin{align}\label{eqn-sum-of-x-arrows}
\sum_{\chi \in G^\vee} B_{\chi,x}
\end{align}
The divisor $E_e$ appears in each $B_{\chi,x}$ with multiplicity 
$1$ if the arrow $(\chi,x)$ vanishes along $E_e$ and with 
multiplicity $0$ if it doesn't. We conclude that the multiplicity 
with which $E_e$ occurs in $\eqref{eqn-sum-of-x-arrows}$ is $a$, 
the number of the $x$-oriented arrows of $Q(G)$ which
vanish along $E_e$. 

On the other hand, write $\mathcal{M}$ as $\bigoplus_{\chi \in G^\vee}
\mathcal{L}(-D_\chi)$ (see Section
\ref{section-G-clusters-and-g-graphs}). Then 
$B_{\chi,x} = D_{\chi^{-1}} + (x) -
D_{\kappa(x)^{-1} \chi^{-1}}$, where $(x) = \sum_{f \in \mathfrak{E}}
\text{val}_{E_f}(x)$ is the principal $G$-Weil divisor of $x$
(see \cite{Logvinenko-DerivedMcKayCorrespondenceViaPureSheafTransforms},
Section 4.3). So we re-write $\eqref{eqn-sum-of-x-arrows}$ as
\begin{align}\label{eqn-sum-of-x-arrows-2}
\sum_{\chi \in G^\vee} \left( 
D_{\chi^-1} + (x) - D_{\kappa(x)^{-1} \chi^{-1}}
\right).
\end{align}
All $D_\chi$ in the sum cancel out and we are left with $|G| (x)$. 
Therefore the multiplicity of $E_e$ in \eqref{eqn-sum-of-x-arrows-2}
is $|G|\text{val}_{E_e}(x) = \text{val}_{E_e}(x^{|G|}) = e(|G|,0,0) = 
|G| e(1,0,0)$. We conclude that $e(1,0,0) = \frac{1}{|G|} a$. 
Similarly $e(0,1,0) = \frac{1}{|G|} b$ and $e(0,0,1) = \frac{1}{|G|} c$. 
The result follows.
\end{proof}

Most of the results on sink-source graphs in
\cite{CautisLogvinenko-ADerivedApproachToGeometricMcKayCorrespondence}
are stated for the dual $\tilde{\mathcal{M}}$ of the universal family
$\mathcal{M}$ of $G$-clusters, as it is $\tilde{\mathcal{M}}$ that is used
for computing the transforms $\Psi(\mathcal{O}_0 \otimes \chi)$. It
is more convenient for us to work with $\mathcal{M}$ in the present paper, 
so the following general lemma is useful: 
\begin{lemma} \label{class-in-a-family-from-a-class-in-its-dual}
Let $\mathcal{F}$ be any $\gnat$-family on $Y$, let $E \in
\Except(Y)$ and let $\chi \in G^\vee$. 
\begin{enumerate}
\item $\chi$ is an $(a,b)$-source (resp. sink) in
$SS_{\mathcal{F}, E}$ if and 
only if $\chi^{-1}$ is a $(b,a)$-source (resp. sink) in
$SS_{\tilde{\mathcal{F}}, E}$.
\item $\chi$ is an $x$-$(a,b)$-charge in $SS_{\mathcal{F}, E}$ if and 
only if $\chi^{-1}$ is an $x$-$(b,a)$-charge in
$SS_{\tilde{\mathcal{F}}, E}$. Similarly for $y$- and $z$-oriented charges.
\item $\chi$ is an $x$-tile in $SS_{\mathcal{F}, E}$ if and only if 
$\chi^{-1}$ is an $x$-tile in $SS_{\tilde{\mathcal{F}}, E}$. 
Similarly for $y$- and $z$-oriented tiles. 
\end{enumerate}
\end{lemma}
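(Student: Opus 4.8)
The plan is to reduce all three statements to one combinatorial observation about how dualising a $\gnat$-family acts on its associated McKay quiver representation, and then to read off the three cases by inspecting the classification of Figures \ref{figure-09}--\ref{figure-13}. Throughout, the only feature of the vanishing divisors $B_{\chi,x}$ that we use is that they are unchanged both by twisting a map of line bundles by a line bundle and by applying $\mathcal{H}om_{\mathcal{O}_Y}(-,L)$ to it for a line bundle $L$; neither operation alters the locus where a nonzero map of rank one reflexive sheaves on the smooth threefold $Y$ degenerates.

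First I would recall, from \cite{CautisLogvinenko-ADerivedApproachToGeometricMcKayCorrespondence}, Section 4.2, together with the construction of the dual family in \cite{Logvinenko-DerivedMcKayCorrespondenceViaPureSheafTransforms}, that passing from a $\gnat$-family $\mathcal{F}$ to its dual $\tilde{\mathcal{F}}$ amounts, on the level of associated representations of $Q(G)$, to an $\mathcal{O}_Y$-linear duality combined with the antipode $g\mapsto g^{-1}$ of $G$: each eigensheaf $\tilde{\mathcal{F}}_\chi$ is identified with $\mathcal{F}_{\chi^{-1}}^\vee$ up to a line bundle independent of $\chi$, and the map attached to each arrow of $Q(G)$ in $Q(G)_{\tilde{\mathcal{F}}}$ is, up to that same uniform twist, the $\mathcal{O}_Y$-dual of the map attached in $Q(G)_{\mathcal{F}}$ to the reversed arrow after relabelling each vertex $\psi$ by $\psi^{-1}$. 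Explicitly: the $x$-oriented arrow of $Q(G)$ starting at $\chi$, read in $Q(G)_{\tilde{\mathcal{F}}}$, is the dual of the $x$-oriented arrow ending at $\chi^{-1}$, read in $Q(G)_{\mathcal{F}}$; the $x$-oriented arrow ending at $\chi$ in $Q(G)_{\tilde{\mathcal{F}}}$ is the dual of the $x$-oriented arrow starting at $\chi^{-1}$ in $Q(G)_{\mathcal{F}}$; and the same with $y$ and $z$. By the remark of the previous paragraph, this gives: an $x$-oriented arrow of $Q(G)$ vanishes along $E$ in $Q(G)_{\tilde{\mathcal{F}}}$ if and only if the $x$-oriented arrow obtained by reversing it and relabelling each vertex by its inverse vanishes along $E$ in $Q(G)_{\mathcal{F}}$.

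Under the standard planar embedding of $Q(G)$ into $T_G$ of Figure \ref{figure-07}, the operation ``reverse all arrows and relabel $\psi\mapsto\psi^{-1}$'' is realised by rotating the picture by $\pi$: this carries the hexagon $\hex(\chi)$ onto $\hex(\chi^{-1})$ and, for each of $x,y,z$, interchanges the incoming with the outgoing arrow of that orientation. Hence the black/grey pattern on $\hex(\chi)$ in $SS_{\tilde{\mathcal{F}},E}$ is exactly the $\pi$-rotation of the black/grey pattern on $\hex(\chi^{-1})$ in $SS_{\mathcal{F},E}$. Now one inspects the finitely many admissible patterns in Figures \ref{figure-09}--\ref{figure-13}: the $\pi$-rotation is an involution on the set of vertex types which fixes each of the three tile patterns (an $x$-tile pattern is symmetric under swapping its incoming and outgoing $x$-arrows), which interchanges the $x$-$(1,0)$-charge pattern with the $x$-$(0,1)$-charge pattern and likewise for $y$ and $z$, and which interchanges the $(a,b)$-source (resp. sink) pattern with the $(b,a)$-source (resp. sink) pattern. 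Since by \cite{CautisLogvinenko-ADerivedApproachToGeometricMcKayCorrespondence}, Prps. 4.7, every vertex of $Q(G)$ is of exactly one of these types, combining the last two observations yields precisely the three asserted equivalences.

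The step I expect to require the most care is the second paragraph: fixing the exact form of the duality $\mathcal{F}\mapsto\tilde{\mathcal{F}}$ on associated quiver representations, keeping straight simultaneously the inversion $\chi\mapsto\chi^{-1}$ on vertices and the reversal of each individual arrow, and verifying that the normalising line bundle twist inherent in the definition of $\tilde{\mathcal{F}}$ genuinely drops out of every vanishing-locus computation. Once that bookkeeping is pinned down, the remaining steps are a direct check against the pictures in Figures \ref{figure-09}--\ref{figure-13}.
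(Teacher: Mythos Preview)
Your proposal is correct and follows essentially the same route as the paper. The paper's proof also identifies that the arrow $(\xi,x)$ in $Q(G)_{\tilde{\mathcal{F}}}$ is the dual of the arrow $(\xi^{-1}\kappa(x)^{-1},x)$ in $Q(G)_{\mathcal{F}}$, deduces that their vanishing loci along $E$ coincide, and then checks this against the hexagon patterns of Figures \ref{figure-09}--\ref{figure-13}; your packaging of the arrow correspondence as a $\pi$-rotation of $\hex(\chi)$ onto $\hex(\chi^{-1})$ is a pleasant way to organise that final case-check, but the underlying argument is the same.
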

\begin{proof}
From the definition of the dual of a $\gnat$-family in 
\cite{CautisLogvinenko-ADerivedApproachToGeometricMcKayCorrespondence},
Section $2.2$ it follows that for any $\xi \in G^\vee$
the $\xi$-eigensheaf $\tilde{\mathcal{F}}_\xi$ of $\tilde{\mathcal{F}}$ 
is precisely the dual of the $\xi^{-1}$-eigensheaf $\mathcal{F}_{\xi^{-1}}$ of
$\mathcal{F}$. It further folows that the map $\alpha'_{\xi,x}$
which corresponds to the arrow $(\xi,x)$ 
in the associated representation $Q(G)_{\tilde{\mathcal{F}}}$ is precisely
the dual of the map $\alpha_{\xi^{-1}\kappa(x)^{-1}, x}$ in
$Q(G)_{\mathcal{F}}$. Therefore $\alpha'_{\xi,x}$ vanishes along $E$
if and only if $\alpha_{\xi^{-1}\kappa(x)^{-1}, x}$ vanishes along $E$. 
Applying this to every arrow in the subquiver $\hex(\chi)$ 
surrounding $\chi$ for every case depicted on 
Figures \ref{figure-09} - \ref{figure-13} yields the claim. 
\end{proof}

The importance of sink-source graphs for us lies in the fact
that the sinks and the sources of $SS_{\mathcal{M}, E}$ are precisely
the characters $\chi \in G^\vee$ for which $E \subset
\supp(\Psi(\mathcal{O}_0 \otimes \chi))$:
\begin{prps} \label{sinks-and-sources-iff-in-transforms-support}
Let $E \in \Except(Y)$ and $\chi \in G^\vee$. Then:
\begin{enumerate}
\item $E \subset \supp H^0(\Psi(\mathcal{O}_0 \otimes \chi))$
if and only if $\chi$ is a $(3,0)$-sink in $SS_{\mathcal{M}, E}$. 
\item $E \subset \supp H^{-1}(\Psi(\mathcal{O}_0 \otimes \chi))$
if and only if $\chi$ is a source in $SS_{\mathcal{M}, E}$. 
\item $E \subset \supp H^{-2}(\Psi(\mathcal{O}_0 \otimes \chi))$
if and only if $\chi$ is a $(0,3)$-sink in $SS_{\mathcal{M}, E}$, 
that is if $\chi$ is the trivial character $\chi_0$.
\end{enumerate}
\end{prps}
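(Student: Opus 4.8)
The plan is to reduce the statement to the computation of the transforms $\Psi(\mathcal{O}_0\otimes\chi)$ performed in \cite{CautisLogvinenko-ADerivedApproachToGeometricMcKayCorrespondence}, localised at the generic points of the exceptional divisors. Recall that, resolving $\mathcal{O}_0$ on $\mathbb{C}^3$ by the Koszul complex of $(x,y,z)$ and applying $\Psi$ term by term, one represents $\Psi(\mathcal{O}_0\otimes\chi)$ by a Koszul-type complex $\mathcal{C}^\bullet_\chi$ of tautological line bundles $\L(-D_{(-)})$ on $Y$ placed in degrees $-3,\dots,0$, whose non-zero differential entries are exactly the ``multiplication by $x$, $y$ and $z$'' maps attached to the twelve arrows of the hexagon subquiver $\hex(\chi)$ of $Q(G)$ in the associated representation of the dual family $\tilde{\mathcal{M}}$. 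Since $x$, $y$, $z$ are non-zero-divisors the leftmost differential is injective, so $H^{-3}(\mathcal{C}^\bullet_\chi)=0$, and as $\mathcal{C}^\bullet_\chi$ is concentrated in non-positive degrees only $H^0$, $H^{-1}$ and $H^{-2}$ can be non-zero.

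First I would fix $E\in\Except(Y)$, let $\eta$ be its generic point, and observe that, since each $\supp H^i(\Psi(\mathcal{O}_0\otimes\chi))$ is closed and $E$ is irreducible, $E\subseteq\supp H^i(\Psi(\mathcal{O}_0\otimes\chi))$ if and only if $H^i(\mathcal{C}^\bullet_\chi\otimes_{\mathcal{O}_Y}\mathcal{O}_{Y,\eta})\neq 0$. The local ring $R:=\mathcal{O}_{Y,\eta}$ is a discrete valuation ring, so trivialising the line bundles of $\mathcal{C}^\bullet_\chi$ over it turns $\mathcal{C}^\bullet_\chi\otimes R$ into a complex of finite free $R$-modules each of whose matrix entries is multiplication by an element of $R$. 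By \cite{CautisLogvinenko-ADerivedApproachToGeometricMcKayCorrespondence}, Propositions $4.4$ and $4.5$, each such element has valuation $0$ or $1$ along $E$, the valuation being $1$ precisely when the corresponding arrow of $Q(G)$ vanishes along $E$. Consequently the quasi-isomorphism class of $\mathcal{C}^\bullet_\chi\otimes R$ depends only on which arrows of $\hex(\chi)$ vanish along $E$, that is only on the type of the vertex $\chi$ among the configurations of Figures \ref{figure-09}--\ref{figure-13} for the family $\tilde{\mathcal{M}}$; and by Lemma \ref{class-in-a-family-from-a-class-in-its-dual} this type is read off from the type of $\chi$ in $SS_{\mathcal{M},E}$ by interchanging $(a,b)$- and $(b,a)$- sinks, sources and charges.

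It then remains to compute, for each of the finitely many configurations of Figures \ref{figure-09}--\ref{figure-13}, the cohomology of the resulting complex of free $R$-modules, a short exercise in linear algebra over a discrete valuation ring. When $\chi$ is a tile or a charge of any orientation, enough of the three ``multiplication'' maps in $\mathcal{C}^\bullet_\chi\otimes R$ are units that the complex is acyclic, so $E$ lies in no $\supp H^i$. When $\chi$ is a sink or a source exactly one cohomology survives at $\eta$: a $(3,0)$-sink of $SS_{\mathcal{M},E}$ gives $H^0\neq0$, each of the $(3,3)$-, $(1,2)$- and $(2,1)$-sources gives $H^{-1}\neq0$, and the $(0,3)$-sink of $SS_{\mathcal{M},E}$ gives $H^{-2}\neq0$. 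Since by \cite{CautisLogvinenko-ADerivedApproachToGeometricMcKayCorrespondence} the unique $(0,3)$-sink appearing in every $SS_{\mathcal{M},E}$ is the trivial character $\chi_0$, this also accounts for the parenthetical remark in item $(3)$.

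The conceptual content here is modest and the main effort is bookkeeping. The delicate points are keeping track of which tautological bundle and which arrow of $\hex(\chi)$ sits in which spot of $\mathcal{C}^\bullet_\chi$, of the signs in the Koszul differential, and of the passage between the representations of $\mathcal{M}$ and of $\tilde{\mathcal{M}}$ furnished by Lemma \ref{class-in-a-family-from-a-class-in-its-dual}; and then carrying out the discrete-valuation-ring linear algebra uniformly over all hexagon configurations, in particular verifying genuine exactness (not merely torsion cohomology) in the tile and charge cases.
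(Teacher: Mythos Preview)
Your approach is essentially the same as the paper's: represent $\Psi(\mathcal{O}_0\otimes\chi)$ by the cube complex of tautological line bundles coming from a hexagon in $Q(G)_{\tilde{\mathcal{M}}}$, reduce the question of whether $E$ lies in $\supp H^i$ to which arrows of that hexagon vanish along $E$, run through the vertex classes of Figures~\ref{figure-09}--\ref{figure-13}, and translate via Lemma~\ref{class-in-a-family-from-a-class-in-its-dual}. The only structural difference is that where you propose to localise at the generic point of $E$ and do the linear algebra over the DVR by hand, the paper simply invokes \cite{CautisLogvinenko-ADerivedApproachToGeometricMcKayCorrespondence}, Lemma~3.1, which packages exactly that computation.

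One bookkeeping slip to fix: by \cite{CautisLogvinenko-ADerivedApproachToGeometricMcKayCorrespondence}, Proposition~4.6, the cube computing $\Psi(\mathcal{O}_0\otimes\chi)$ comes from $\hex(\chi^{-1})_{\tilde{\mathcal{M}}}$, not $\hex(\chi)_{\tilde{\mathcal{M}}}$. Accordingly, the case analysis determines the type of $\chi^{-1}$ in $SS_{\tilde{\mathcal{M}},E}$, and Lemma~\ref{class-in-a-family-from-a-class-in-its-dual} then converts this into the type of $(\chi^{-1})^{-1}=\chi$ in $SS_{\mathcal{M},E}$. As written, your chain ``type of $\chi$ in $\tilde{\mathcal{M}}$ $\leadsto$ type of $\chi$ in $\mathcal{M}$'' misapplies the lemma, which always exchanges $\chi\leftrightarrow\chi^{-1}$ between a family and its dual. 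You flagged this passage as delicate, and it is; once the indices are corrected the argument goes through exactly as in the paper.
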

\begin{proof}
By
\cite{CautisLogvinenko-ADerivedApproachToGeometricMcKayCorrespondence},
Proposition 4.6, the transform $\Psi(\mathcal{O}_0 \otimes \chi)$
is given by the total complex of the skew-commutative cube of
line-bundles induced by the subrepresentation
$\hex(\chi^{-1})_{\tilde{\mathcal{M}}}$ of the associated
representation $Q(G)_{\tilde{\mathcal{M}}}$. Then 
\cite{CautisLogvinenko-ADerivedApproachToGeometricMcKayCorrespondence},
Lemma 3.1 reduces the question of whether $E \subset \supp
H^{-i}(\Psi(\mathcal{O}_0 \otimes \chi))$ for $i = 0$, $-1$ or $-2$ to  
knowing which of the maps in the cube vanish along $E$, i.e. which 
of the arrows in $\hex(\chi^{-1})_{\tilde{\mathcal{M}}}$ vanish along
$E$. We can therefore go through every vertex class depicted on Figures 
$\ref{figure-09}$-$\ref{figure-13}$ and apply
\cite{CautisLogvinenko-ADerivedApproachToGeometricMcKayCorrespondence}, 
Lemma 3.1 to see whether $E \subset 
\supp H^{-i}(\Psi(\mathcal{O}_0 \otimes \chi))$ for some $i$ 
if $\chi^{-1}$ is of that class in $SS_{\tilde{\mathcal{M}}, E}$. 
Finally, we translate the class of $\chi^{-1}$ in
$SS_{\tilde{\mathcal{M}}, E}$
to the class of $\chi$ in $SS_{\mathcal{M}, E}$ via Lemma
\ref{class-in-a-family-from-a-class-in-its-dual}. 

For example, by \cite{CautisLogvinenko-ADerivedApproachToGeometricMcKayCorrespondence},
Lemma 3.1 we have $E \subset \supp H^{0}(\Psi(\mathcal{O}_0 \otimes
\chi))$ if and only if $E$ belongs to the vanishing divisors of
maps $\alpha^1$, $\alpha^2$ and $\alpha^3$ of the cube, which 
translates to arrows $(\chi^{-1} \kappa(x)^{-1}, x)$, 
$(\chi^{-1} \kappa(y)^{-1}, y)$ and $(\chi^{-1} \kappa(z)^{-1}, z)$
in $\hex(\chi^{-1})_{\tilde{\mathcal{M}}}$. From 
Figures $\ref{figure-09}$-$\ref{figure-13}$
this can only happen when $\chi^{-1}$ is a $(0,3)$-sink in
$SS_{\tilde{\mathcal{M}}, E}$, i.e. $\chi$ is a $(3,0)$-sink
in $SS_{\mathcal{M}, E}$. 
\end{proof}

\section{Main results}

\begin{prps}\label{prps-(3,3)-source-graph-to-case-1}
Let $e \in \mathfrak{E}$ be such that $E_e \subset \Except(Y)$. If 
the graph $SS_{\mathcal{M}, E_e}$ is as depicted on Figure 
\ref{figure-15a} then the vertex $e$ in the triangulation $\Sigma$ 
looks locally as depicted on Figure $\ref{figure-15b}$. 
The monomial ratios carving out the edges incident to $e$
can be computed in terms of the indicated lengths in 
$SS_{\mathcal{M}, E_e}$ as shown on Figure $\ref{figure-15b}$. 
And $e = \frac{1}{|G|}(bc, ac, ab)$ in $L \subset \mathbb{Q}^3$.
\end{prps}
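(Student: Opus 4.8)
The plan is to read off everything from the sink-source graph $SS_{\mathcal{M}, E_e}$ directly. By hypothesis this graph has the shape of Figure~\ref{figure-14a}: a single $(3,3)$-source $\chi$ with three charge lines leaving it, one of each orientation, each returning to $\chi$ after going once around the torus $T_G$, and the three resulting regions are tiled by $x$-, $y$- and $z$-tiles respectively. Say the $x$-oriented charge line has length $a$, the $y$-oriented one length $b$ and the $z$-oriented one length $c$ (these are the "indicated lengths" on Figure~\ref{figure-15a}). The first thing I would do is count, for each orientation, the total number of arrows of $Q(G)$ that vanish along $E_e$. No arrow on a charge line vanishes, and inside each region the arrows that vanish are exactly those with the orientation of that region's tiles. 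The $x$-tiled region is the one bounded by the $y$- and $z$-charge lines; counting lattice triangles one finds it contains $bc$ many $x$-oriented arrows (and the $x$-charge line of length $a$ crosses it, contributing no vanishing $x$-arrows). Hence the number of vanishing $x$-oriented arrows is $bc$, and symmetrically $ac$ for $y$ and $ab$ for $z$. Lemma~\ref{lemma-number-of-diamonds} then gives immediately $e = \frac{1}{|G|}(bc, ac, ab)$ in $L \subset \mathbb{Q}^3$.

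Next I would identify the local picture of $e$ in the triangulation $\Sigma$. By Proposition~\ref{prps-the-classification-of-the-vertices-of-the-triangulation}, $e$ falls into one of three cases, and since $SS_{\mathcal{M}, E_e}$ determines $E_e$ and hence $e$ completely, it suffices to see that the $(3,3)$-source shape forces Case~\ref{case-meeting-of-champions}: $e$ is the meeting point of three lines emanating from the three vertices of $\Delta$ (Figure~\ref{figure-01}). I would argue this by the structure of the tile regions. The three charge lines of $SS_{\mathcal{M}, E_e}$ close up around the torus, so each bounds a region on both sides; the combinatorics of how the three one-dimensional rays in $M$ — perpendicular to the three edges through $e$ that are crossed by the three straight lines of Figure~\ref{figure-01} — relate to the three charge orientations is exactly the $(3,3)$-source configuration. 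Concretely, each of the three "long" edges incident to $e$ (the ones lying on a line through a vertex of $\Delta$) corresponds to one of the three charge lines, and Corollary~\ref{cor-markings-belong-to-ggraphs} together with the length of that charge line pins down the carving ratio.

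For the monomial ratios themselves, I would use Corollary~\ref{cor-markings-belong-to-ggraphs}: if an edge $\langle e, f\rangle$ is carved out by the co-prime ratio $m : m'$ and marked by $\chi$, then $m$ is the $G$-graph generator $r_\chi$ on one side and $m'$ is $r'_\chi$ on the other. The length of the corresponding charge line in $SS_{\mathcal{M}, E_e}$ counts precisely the number of arrows of $Q(G)$ composing the path from the source to the sink (which here is again $\chi$), and each such arrow multiplies the eigensheaf generator by one of $x$, $y$, $z$; tracking the $G$-graph generators along this path (and using the $(3,3)$-source picture to see which variable appears, namely the tile orientation of the adjacent region) shows that the carving ratio is a pure power $x^a$, $y^b$ or $z^c$ against the relevant monomial on the other side, exactly as displayed on Figure~\ref{figure-15b}. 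Corollary~\ref{cor-primitivity-of-marking-ratios} guarantees these ratios are primitive, which is what lets us conclude the exponents are exactly $a$, $b$, $c$ and not proper multiples.

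The main obstacle I anticipate is the bookkeeping that converts "charge line of length $\ell$" into "the monomial ratio is $x^\ell : (\text{something})$" — i.e. making precise, via the standard planar embedding of $Q(G)$ into $T_G$, the claim that traversing a charge line of a given orientation corresponds to multiplying by that variable exactly $\ell$ times while staying within a fixed $G$-graph. This requires carefully matching the arrow-labels along the charge line in $Q(G)_{\mathcal{M}}$ with the generators $r_\chi$ of the $G$-graphs $\Gamma_\sigma$ on the charts meeting $E_e$, and checking that the path does not leave the relevant chart. Once that dictionary is set up the rest is the routine triangle-counting in $T_G$ already used in Lemma~\ref{lemma-number-of-diamonds}.
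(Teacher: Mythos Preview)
Your computation of $e = \frac{1}{|G|}(bc, ac, ab)$ via Lemma~\ref{lemma-number-of-diamonds} is correct and is exactly what the paper does for that part. But the rest of your argument has a genuine gap: you are trying to \emph{read off} the edges incident to $e$ and their carving ratios directly from the charge lines, tacitly assuming a one-to-one correspondence ``charge line $\leftrightarrow$ edge of $\Sigma$ through $e$''. No such correspondence has been established, and your second and third paragraphs do not supply one --- they assert the conclusion (``the combinatorics\dots is exactly the $(3,3)$-source configuration'') rather than prove it. In particular you never explain why $e$ has valency~$3$ rather than $4$, $5$ or $6$, nor why no edge incident to $e$ can be carved out by a mixed ratio such as $x^{i'} : y^{j'} z^{k'}$ with $j',k' > 0$.

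The paper's argument proceeds by \emph{elimination}. Take any hypothetical edge $(e,f)$ carved out by $m : m'$. By Corollary~\ref{cor-markings-belong-to-ggraphs} the monomials $m$ and $m'$ lie in the $G$-graphs $\Gamma_\sigma$, $\Gamma_{\sigma'}$ of the two adjacent $3$-cones, so $m \cdot 1 \neq 0$ in $\mathcal{M}_{|E_\sigma}$ and $m' \cdot 1 \neq 0$ in $\mathcal{M}_{|E_{\sigma'}}$. Translated into $Q(G)_{\mathcal{M}}$, this says that every path in $Q(G)$ starting at $\chi_0$ and spelling out $m$ (respectively $m'$) consists of arrows that do not vanish at $E_\sigma$ (respectively $E_{\sigma'}$), hence do not vanish along $E_e$. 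The shape of $SS_{\mathcal{M},E_e}$ then severely constrains where the common endpoint $X = \kappa(m) = \kappa(m')$ can sit: for a ratio $x^{i'} : y^{j'} z^{k'}$ the vertex $X$ must lie on the $x$-arrow path $O_1 I_1 O_2$ \emph{and} on the boundary of the $x$-tiled region, forcing $X = O_2$ and the ratio $x^{2a} : y^b z^c$, which is killed by Corollary~\ref{cor-primitivity-of-marking-ratios}; for $x^{i'} : y^{j'}$ one is forced to $X = I_1$ and the ratio $x^a : y^b$. Only after this exhaustive case analysis does comparison with Proposition~\ref{prps-the-classification-of-the-vertices-of-the-triangulation} place $e$ in Case~\ref{case-meeting-of-champions}. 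The ``obstacle'' you anticipate is not mere bookkeeping; it is this elimination step, and it is the heart of the proof.
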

\begin{figure}[h] \centering 
\subfigure[A one $(3,3)$-source graph] { \label{figure-15a} 
\includegraphics[scale=0.13]{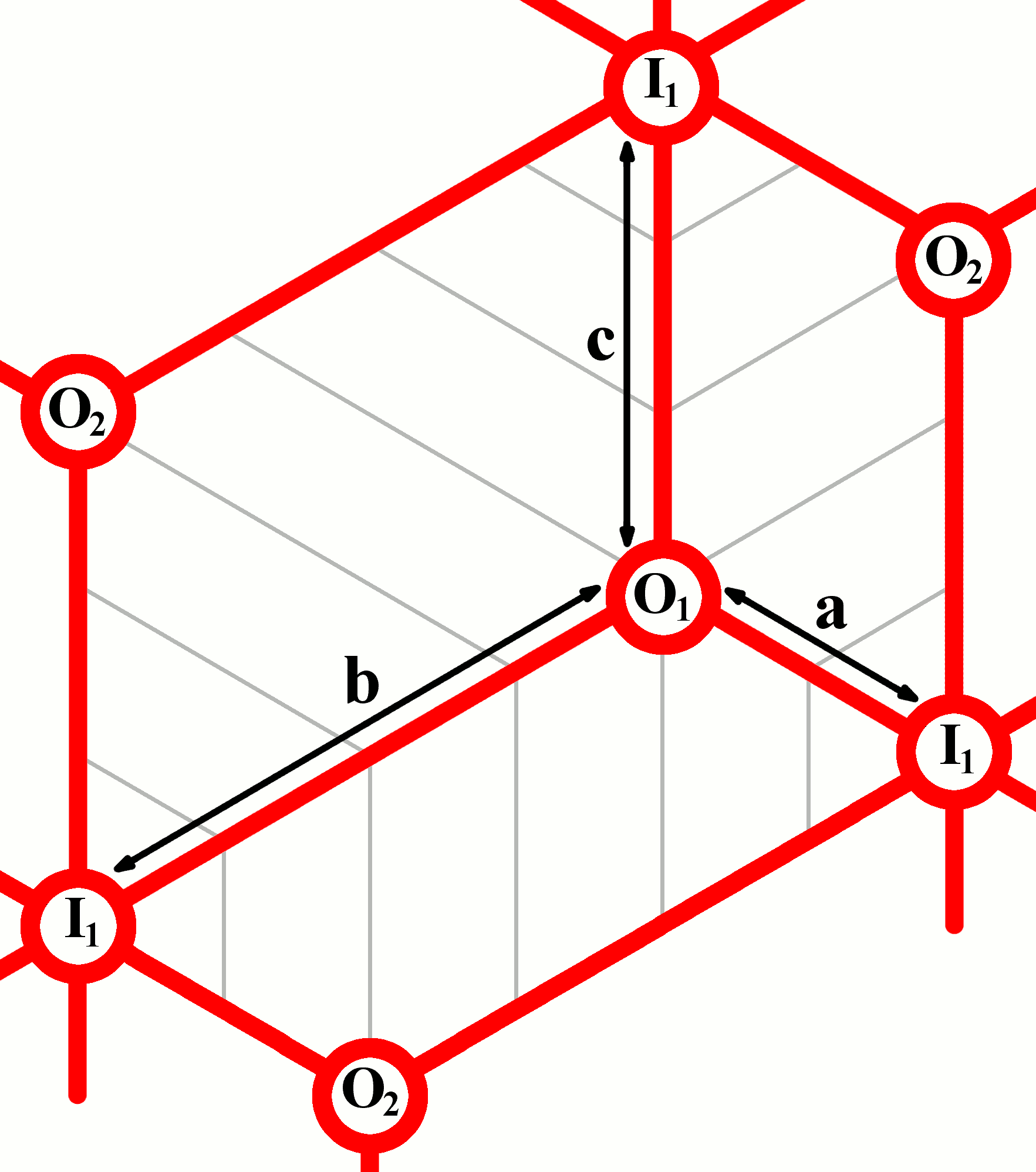}} \hspace{0.1cm}
\subfigure[Case $\ref{case-meeting-of-champions}$ of Prps.
$\ref{prps-the-classification-of-the-vertices-of-the-triangulation}$] { \label{figure-15b}
\includegraphics[scale=0.32]{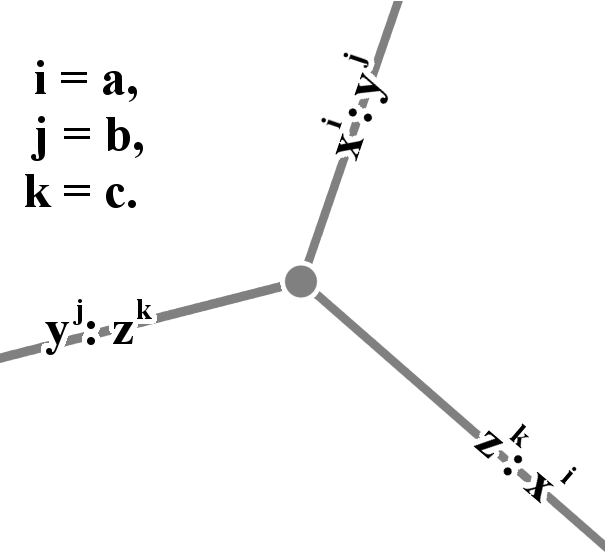}} 
\caption{The correspondence of Proposition
\ref{prps-(3,3)-source-graph-to-case-1}} \label{figure-15}
\end{figure}
\begin{proof}
We proceed by showing how the shape of the sink-source graph 
$SS_{\mathcal{M}, E_e}$ imposes restrictions on which monomial ratios 
can mark the edges which are incident to the vertex $e$ in the triangulation
$\Sigma$. 

Suppose there is an edge incident to $e$ which is carved out by 
a ratio $x^{i'} : y^{j'} z^{k'}$ for some $i',j',k' \neq 0$. 
Let $\sigma$ and $\sigma'$ be the two
triangles containing the edge in question.  Then by Corollary 
$\ref{cor-markings-belong-to-ggraphs}$ one of $x^{i'}$ and $y^{j'} z^{k'}$
must belong to the $G$-graph $\Gamma_\sigma$ of the toric fixed 
point $E_{\sigma}$ and the other to $\Gamma_{\sigma'}$. Without loss of
generality, assumed $x^{i'} \in \Gamma_\sigma$ and $y^{j'} z^{k'} \in
\Gamma_{\sigma'}$. Then, by definition 
of a $G$-graph, $x^{i'}$ doesn't belong to the ideal defining the
$G$-cluster $\mathcal{M}_{|E_{\sigma}}$. Therefore $x^{i'} \cdot 1
\neq 0$ in $\mathcal{M}_{|E_{\sigma}}$ and, similarly, $y^{j'} z^{k'}
\cdot 1 \neq 0$ in $\mathcal{M}_{|E_{\sigma'}}$. Translating this
into the language of the associated representation $Q(G)_{\mathcal{M}}$
we see that in the path of $i'$ $x$-oriented arrows
in $Q(G)$ which starts at $\chi_0$ no arrow can vanish at $E_{\sigma}$.
Similarly, in any path which starts at $\chi_0$ and consists of $j'$
$y$-oriented arrows and $k'$ $z$-oriented arrows no arrow can vanish
at $E_{\sigma'}$. But the vanishing locus of any arrow in
$Q(G)_{\mathcal{M}}$ is a divisor of form $\sum_{f \in \mathfrak{E}}
b_{f} E_f$ where $b_f = 1$ or $0$. Therefore, if an arrow doesn't
vanish at $E_{\sigma}$ it doesn't vanish at any $E_{f}$ such that
$E_{\sigma} \in E_{f}$. In other words, it doesn't vanish at any of
the three divisors corresponding to the three vertices of the triangle
$\sigma$ in $\mathfrak{E}$. In particular, it doesn't vanish at
$E_{e}$. Similarly if an arrow doesn't vanish at $E_\sigma'$
it also doesn't vanish along $E_e$.  

Denote by $X$ the vertex of $Q(G)$ corresponding to the common 
character of $x^{i'}$ and $y^{j'} z^{k'}$. Recall that on 
Figure \ref{figure-15a} the vertex $O_1$ corresponds to the trivial
character $\chi_0$.  So we have a path of $i'$ $x$-oriented arrows
which begins at $O_1$, terminates at $X$ and none of the arrows vanish 
along $E_e$. It must not 
therefore enter the region tiled with $x$-oriented tiles as within 
this region every $x$-oriented arrow vanishes at $E_e$. It is evident 
from Figure \ref{figure-15a} that the whole path, together with its
endpoint $X$, must therefore be contained in $x$-$(0,1)$-charge line 
$O_1 I_1$ and the $x$-$(1,0)$-charge line $I_1 O_2$ which follows upon
it. On the other hand, the paths which start at $O_1$ terminate at 
$X$, and consist of $j'$ $y$-oriented arrows and $k'$ $z$-oriented
arrows, sweep out a parallelogram betwen $O_1$ and $X$ with sides 
of length $j'$ and $k'$. As by assumption neither $j' = 0$ nor $k' = 0$
this parallelogram is non-degenerate. As no $y$- or $z$-oriented arrow 
within this parallelogram vanishes along $E_e$ the whole parallelogram must be
contained within the region tiled with $x$-oriented tiles. In
particular, $X$ itself must lie within this region or on its boundary.
From Figure \ref{figure-15a} it is evident that the only 
common points of the $x$-arrow path $O_1 I_1 O_2$ on which $X$ must lie
and of the $x$-tiled region are $O_1$, $I_1$ and $O_2$. We can't have $X =
O_1$ or $I_1$, as then the parallelogram would be degenerate which contradicts
$j',k' \neq 0$. We conclude that $X = O_2$, the parallelogram is 
the whole of the $x$-tiled region and $(i',j',k') = (2a,b,c)$. 
The ratio would then be $x^{2a} : y^{b} z^{c}$. But this is
impossible since $\frac{x^{2a}}{y^{b} z^{c}}$ decomposes as 
$\frac{x^{a}}{y^b}\;\frac{x^a}{z^c}$ and so can't carve
out an edge of $\Sigma$ by Corollary $\ref{cor-primitivity-of-marking-ratios}$. 
Repeating the same argument 
for the ratios of form $y^{j'} : z^{k'} x^{i'}$ and $z^{k'} : x^{i'} y^{j'}$ 
we see that neither of them can occur either. 

Suppose now there is an edge incident to $e$ which is carved out by 
the ratio $x^{i'} : y^{j'}$ for some $i',j' \neq 0$. Denote by $X$ the vertex
 which corresponds to the common character of $x^{i'}$ and
$y^{j'}$. Arguing as above we see that $X$ must lie both somewhere on 
the $x$-arrow path $O_1 I_1 O_2$ and 
somewhere on the $y$-arrow path $O_1 I_1 O_2$. From Figure \ref{figure-15a} 
it is evident that $X$ must then be either $O_1$, $I_1$ or $O_2$. 
If $X = O_1$, then $i' = j' = 0$ which contradicts our assumption. 
If $X = O_2$ then $i' = 2a$ and $j' = 2b$, so the ratio marking the incident 
edge would be $x^{2a} : y^{2b}$. This contradicts the fact that by its
definition any ratio marking an edge must come from a primitive
Laurent monomial.  Therefore we must have $X = I_1$ and then $(i',j')
= (a,b)$ and the ratio is $x^a : y^b$. Repeating the same argument
for ratios of form $y^{j'} : z^{k'}$ and $z^{k'} : x^{i'}$ we
conclude that the only monomial ratios which can mark the edges
incident to $e$ in $\Sigma$ are:
\begin{align}
x^a : y^b, \quad y^b : z^c,  \quad z^c : x^a.
\end{align}
Consulting the classification of the
Proposition
$\ref{prps-the-classification-of-the-vertices-of-the-triangulation}$ 
we see that $e$ must necessarily belong to Case
$\ref{case-meeting-of-champions}$ reproduced on Figure
$\ref{figure-15b}$, and that we must have $i = a$, $j = b$ and $k=c$.

Finally, to obtain $e = \frac{1}{|G|}(bc, ac, ab)$ we apply
Lemma \ref{lemma-number-of-diamonds}. Observe that the number 
of $x$-oriented arrows which vanish along $E_e$
is precisely the number of $x$-arrows in  
the $x$-tiled region. We count the latter by breaking up 
the $x$-tiled region into little parallelograms 
whose sides are a single $y$-arrow and a single $z$-arrow. 
Each such parallelogram contains exactly one $x$-oriented
arrow as its diagonal and on Figure $\ref{figure-15a}$ we
see the $x$-tiled region consists of $bc$ such parallelograms, since
the $x$-tiled region is itself a big parallelogram with two sides
consisting one of $b$ $y$-arrows and the other of $c$
$z$-arrows. Similarly, we see see that the number of
$y$-arrows which vanish along $E_e$ is $ac$ and the number 
of $z$-arrows is $ab$. Therefore by Lemma \ref{lemma-number-of-diamonds}
we have $e = \frac{1}{|G|}(bc, ac, ab)$. 
\end{proof}

\begin{figure}[!htb] \centering 
\subfigure[A one $(1,2)$-source and one $(2,1)$-source graph]{\label{figure-16a} 
\includegraphics[scale=0.11]{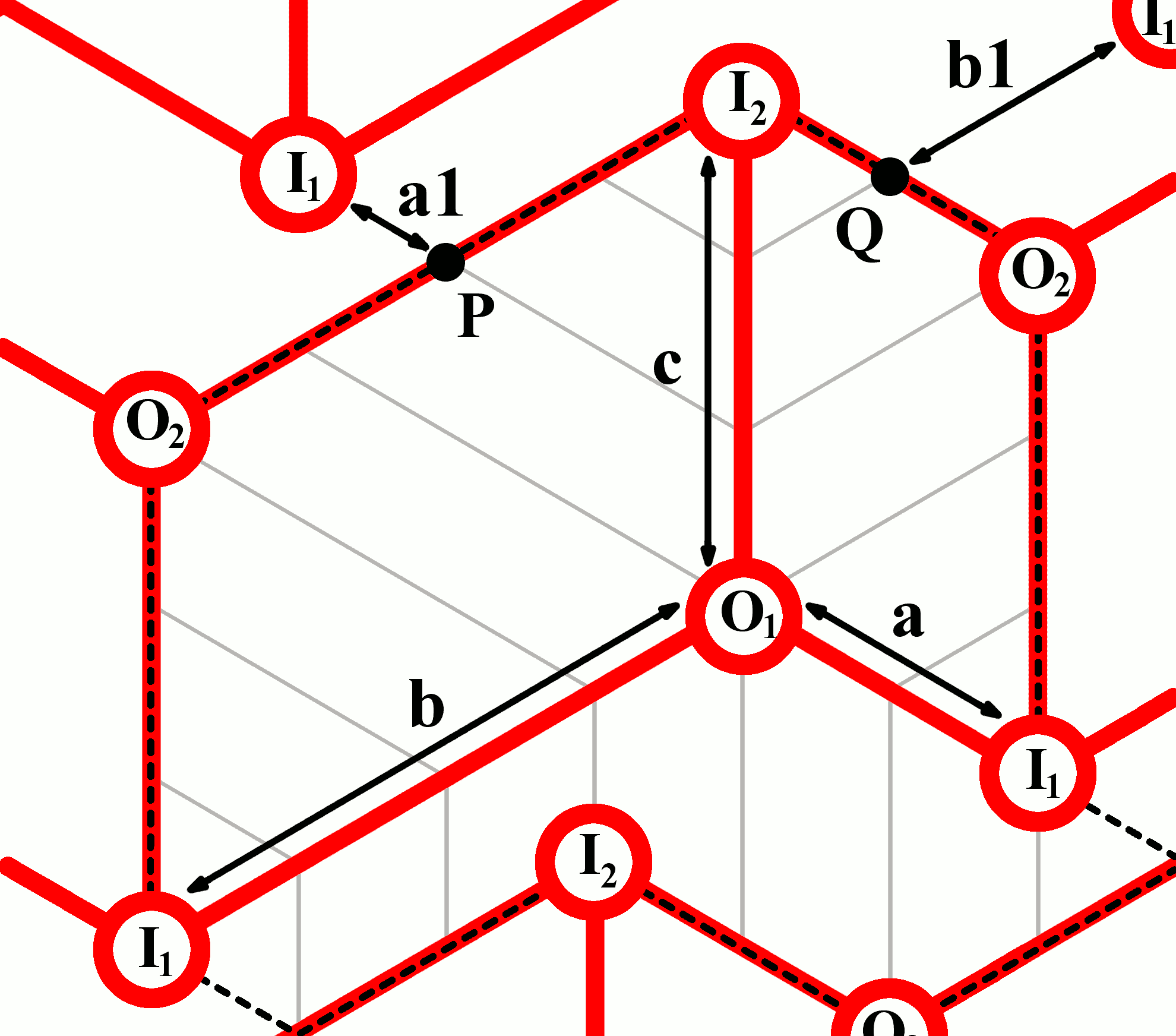}}  
\subfigure[Case $\ref{case-one-line-from-vertex}(a)$ of Prps.
$\ref{prps-the-classification-of-the-vertices-of-the-triangulation}$] { \label{figure-16b}
\includegraphics[scale=0.29]{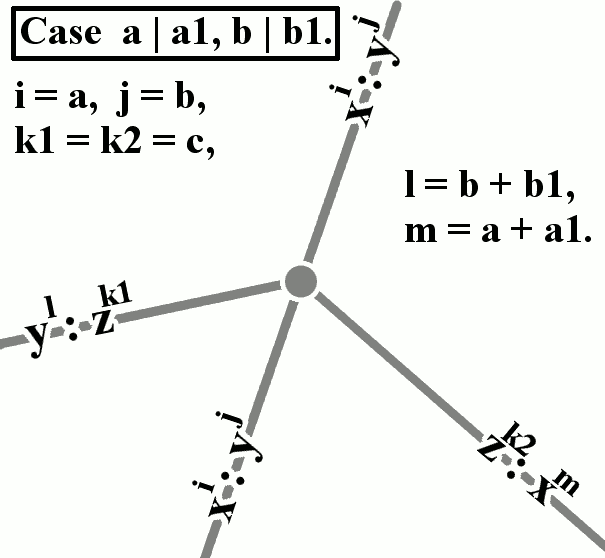}} \\
\subfigure[Case $\ref{case-one-line-from-vertex}(b)$ of Prps.
$\ref{prps-the-classification-of-the-vertices-of-the-triangulation}$] { \label{figure-16c}
\includegraphics[scale=0.29]{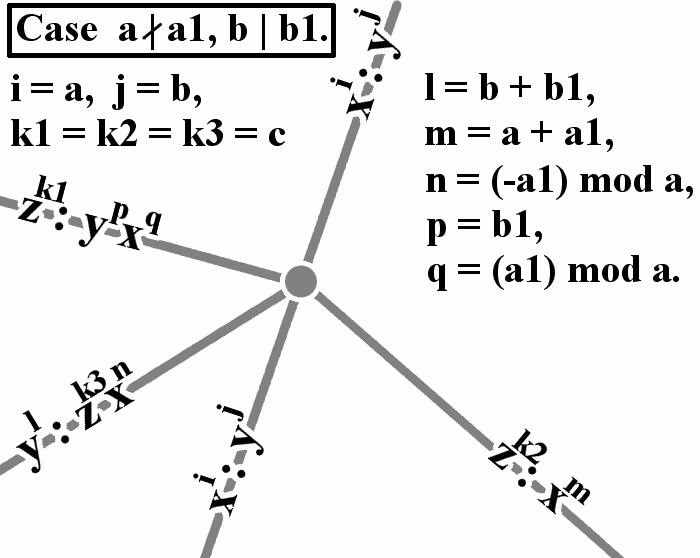}} 
\subfigure[Case $\ref{case-one-line-from-vertex}(b)$ of Prps.
$\ref{prps-the-classification-of-the-vertices-of-the-triangulation}$, 
with $x$ and $y$ permuted] { \label{figure-16d}
\includegraphics[scale=0.29]{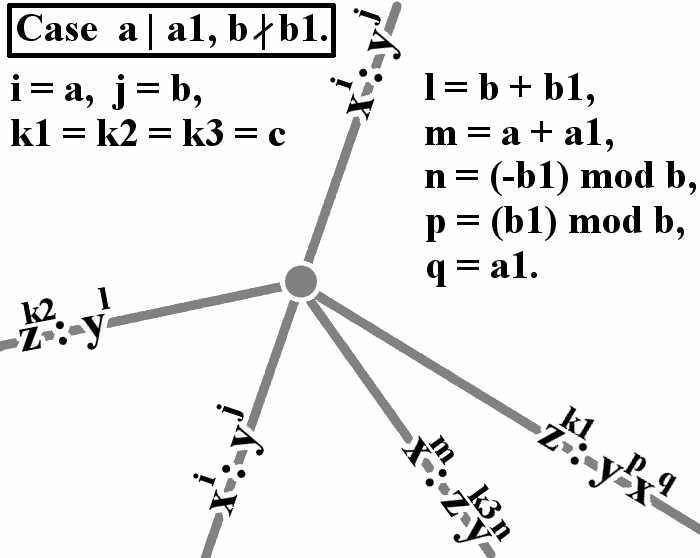}} \\
\subfigure[Case $\ref{case-one-line-from-vertex}(c)$ of Prps.
$\ref{prps-the-classification-of-the-vertices-of-the-triangulation}$] { \label{figure-16e}
\includegraphics[scale=0.29]{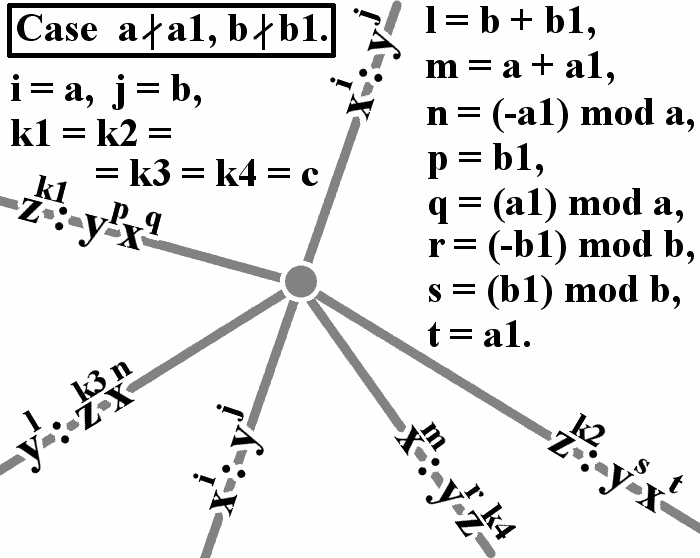}} 
\caption{The correspondence of Proposition
\ref{prps-one-(2,1)-source-one-(1,2)-source-graph-to-case-2}} \label{figure-16}
\end{figure}
\begin{prps}\label{prps-one-(2,1)-source-one-(1,2)-source-graph-to-case-2}
Let $e \in \mathfrak{E}$ be such that $E_e \in \Except(Y)$. If 
the graph $SS_{\mathcal{M}, E_e}$ is as depicted on Figure 
\ref{figure-16a} then the vertex $e$ in 
the triangulation $\Sigma$ looks locally as depicted on:
\begin{enumerate}
\item Figure $\ref{figure-16b}$ if $a \mid a1$ and $b \mid b1$.
\item Figure $\ref{figure-16c}$ if $a \mid a1$ and $b \nmid b1$.
\item Figure $\ref{figure-16d}$ if $a \nmid a1$ and $b \mid b1$.
\item Figure $\ref{figure-16e}$ if $a \nmid a1$ and $b \nmid b1$.
\end{enumerate}
The monomial ratios carving out the edges incident to $e$ can be
computed in terms of the indicated lengths in $SS_{\mathcal{M}, E_e}$
as shown on Figures $\ref{figure-16b}$-$\ref{figure-16d}$. 
If the shape of $SS_{\mathcal{M}, E_e}$ is a rotation of Figure
\ref{figure-16a} by $\frac{2\pi}{3}$ or $\frac{4\pi}{3}$ one permutes
$x$, $y$ and $z$ in Figures $\ref{figure-16b}$-$\ref{figure-16d}$
accordingly. The coordinates of $e$ in $L$ are 
$\frac{1}{|G|}(bc, ac, |G| - bc - ac)$. 
\end{prps}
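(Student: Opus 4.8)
The plan is to run, with more elaborate case–checking, the same three–step argument used to prove Proposition \ref{prps-(3,3)-source-graph-to-case-1}. First I would use the shape of $SS_{\mathcal{M}, E_e}$ depicted on Figure \ref{figure-16a} to restrict which monomial ratios can carve out the edges of $\Sigma$ incident to $e$; then I would compare the surviving list of ratios against the classification of Proposition \ref{prps-the-classification-of-the-vertices-of-the-triangulation} to see that $e$ falls into Case \ref{case-one-line-from-vertex} with the local picture asserted; and finally I would compute the coordinates of $e$ in $L$ via Lemma \ref{lemma-number-of-diamonds}, by counting, for each orientation, the arrows of $Q(G)$ that vanish along $E_e$.

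For the first step I would fix the $(0,3)$-sink of $SS_{\mathcal{M}, E_e}$ to be the trivial character $\chi_0$ (it always is, by Proposition \ref{sinks-and-sources-iff-in-transforms-support}) and, without loss of generality, take the non-contractible tiled region to carry $z$-oriented tiles. As in the proof of Proposition \ref{prps-(3,3)-source-graph-to-case-1}, an edge at $e$ carved by a ratio $x^{i'} : m$ with $m$ a monomial in $y$ and $z$ produces a vertex $X$ of $Q(G)$ — the common character of $x^{i'}$ and $m$ — that is joined to $\chi_0$ both by a path of $i'$ $x$-arrows which must avoid the $x$-tiled region (no arrow in it may vanish along $E_e$), and by a family of $y,z$-paths which sweep out a parallelogram that must lie in the closure of the $x$-tiled region. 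Reading Figure \ref{figure-16a}, these two demands force this parallelogram to be degenerate, so $m$ is a pure power of $y$ or of $z$; the analogous reasoning for ratios headed by a power of $y$ or of $z$ shows that \emph{every} edge at $e$ is carved by a ratio between two prime-power monomials. Corollary \ref{cor-primitivity-of-marking-ratios} then discards the imprimitive ones, leaving exactly the ratios recorded on Figures \ref{figure-16b}--\ref{figure-16e}. The divisibility hypotheses $a \mid a_1$ and $b \mid b_1$ enter precisely at this point: they are the conditions under which the ratio arising from the portion of $SS_{\mathcal{M}, E_e}$ between the $(1,2)$-source and the $(2,1)$-source is again a prime-to-prime ratio lying on the same line through $e$, rather than forcing a fresh edge, and this is exactly what separates the valency $4$, $5$ and $6$ alternatives of Case \ref{case-one-line-from-vertex}, hence selects which of Figures \ref{figure-16b}--\ref{figure-16e} applies.

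The last step proceeds as for Proposition \ref{prps-(3,3)-source-graph-to-case-1}: one tiles the $x$-oriented region of $SS_{\mathcal{M}, E_e}$ by elementary parallelograms with a $y$-side and a $z$-side, each carrying a single $x$-oriented diagonal, and likewise for the $y$-oriented region; Figure \ref{figure-16a} then gives $bc$ vanishing $x$-arrows and $ac$ vanishing $y$-arrows, so $e = \frac{1}{|G|}(bc, ac, n)$ for some $n$. The $z$-oriented region is non-contractible, so its diagonal count is not a plain product; but $e$ lies on the junior simplex $\Delta$, so its three coordinates sum to $1$, which forces $n = |G| - bc - ac$ and hence $e = \frac{1}{|G|}(bc, ac, |G| - bc - ac)$.

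The hard part will be the first step. Because one tiled region of $SS_{\mathcal{M}, E_e}$ wraps around $T_G$, a same-oriented arrow path issuing from $\chi_0$ that avoids a given tiled region may still wind nontrivially, so one must argue carefully that it is nevertheless trapped on the charge lines bounding that region and cannot leak out through the non-contractible region; and then each of the four combinations of the two divisibility hypotheses has to be checked separately, confirming that it yields the stated subfigure and no other vertex type of Case \ref{case-one-line-from-vertex}. Both verifications are carried out by direct inspection of Figure \ref{figure-16a}, exactly as the simpler versions were in the proof of Proposition \ref{prps-(3,3)-source-graph-to-case-1} — there are just more configurations to work through.
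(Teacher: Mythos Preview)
Your overall three-step plan matches the paper's, and the coordinate computation via Lemma \ref{lemma-number-of-diamonds} together with the $\sum e_i = 1$ trick is exactly right. But the heart of your first step contains a genuine error.

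You claim that the parallelogram argument forces the second monomial $m$ in a ratio $x^{i'}:m$ to be a pure power, and hence that \emph{every} edge at $e$ is carved by a ratio of two prime-power monomials. This is false. In the paper's proof the $x$-arrow path from $\chi_0$ is \emph{not} confined to a charge line: after running along the $x$-$(0,1)$-charge line $O_1I_1$ it enters the non-contractible $z$-tiled region (where $x$-arrows do not vanish) and continues until it meets the $y$-$(1,0)$-charge line $O_2I_2$ at a vertex $P$. This $P$ lies on the boundary of the $x$-tiled region, so taking $X=P$ satisfies both constraints with a genuinely non-degenerate parallelogram, producing the mixed ratio $x^{a+a_1}:z^{c}y^{(-b_1)\bmod b}$ whenever $b\nmid b_1$. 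These mixed ratios are precisely what appear in Figures \ref{figure-16c}--\ref{figure-16e}; they are not eliminated.

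Accordingly, the role of the divisibility hypotheses is different from what you describe. The paper introduces the auxiliary vertices $P,Q$ and lengths $a_1,b_1,a_2,b_2$, then uses the homology isomorphism $M/\mathbb{Z}(1,1,1)\xrightarrow{\sim}H_1(T_G,\mathbb{Z})$ to show that any intersection of the $x$- and $y$-paths inside the $z$-tiled region occurs at a multiple of $(a,b)$, and that $a_2=(-a_1)\bmod a$, $b_2=(-b_1)\bmod b$. The conditions $b\mid b_1$ and $a\mid a_1$ are then exactly the conditions $P=I_2$ and $Q=I_2$, under which the mixed ratios above degenerate to the pure ratios $x^{a+a_1}:z^{c}$ and $y^{b+b_1}:z^{c}$. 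This is what distinguishes the four subcases. Finally, when $a\nmid a_1$ and $b\nmid b_1$ the surviving list of ratios is a priori compatible with both Case \ref{case-one-line-from-vertex}(c) and Case \ref{case-three-straight-lines} of Proposition \ref{prps-the-classification-of-the-vertices-of-the-triangulation}, and the paper needs a separate ``regular triangle'' product argument to rule out Case \ref{case-three-straight-lines}. Your prime-power claim would have made that step unnecessary, but since the claim fails you will need it.
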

\begin{proof}
Assume that $SS_{\mathcal{M}, E_e}$ is exactly as depicted on Figure 
$\ref{figure-16a}$. The cases of rotation of Figure $\ref{figure-16a}$
by $\frac{2\pi}{3}$ or $\frac{4\pi}{3}$ are dealt with in exactly
the same way.

We employ the same method here as for Proposition 
$\ref{prps-(3,3)-source-graph-to-case-1}$. However this time 
the $z$-tiled region is non-contractible, wrapping around the torus. 
We can perfectly well have two straight lines intersecting at more than
one point within it. This gives rise to some minor technical difficulties,
so we need to establish several auxiliary facts. Consider
a path of $x$-oriented arrows which begins at $O_1$. It first
travels along the $x$-$(0,1)$-charge line $O_1 I_1$. Past the vertex $I_1$ 
the path enters the $z$-tiled region and travels within it until 
it encounters the $y$-$(1,0)$-charge line $O_2 I_2$. We define
$P$ to be the vertex where it happens and we define $a_1$ be
the length of the $x$-arrow path $I_1 P$. Similarly, we define 
$Q$ to be the point where the $y$-arrow path which starts at $O_1$ 
first meets the $x$-$(1,0)$-charge line $I_2 O_2$ and $b_1$ 
to be the length of the $y$-arrow path $I_1 Q$. As noted above 
it is perfectly possible for the paths $I_1 P$ and $I_1 Q$ to 
intersect several times within the $z$-tiled region (even though
it doesn't happen on Figure $\ref{figure-16a}$). Let $C$ be
any vertex where they intersect. Let $a_c$ be the length of 
the $x$-arrow path $I_1 C$ and $b_c$ be the length of $y$-arrow 
path $I_1 C$. We claim that $a_c = n' a$ and $b_c = n' b$ for 
some integer $n' \in \mathbb{Z}$. This is because there
exists a natural isomorphism $M / \mathbb{Z}(1,1,1) \xrightarrow{\sim}
H_1(T_G,\mathbb{Z})$ where $M$ is the lattice of 
$G$-invariant Laurent monomials (see Section
\ref{section-ghilb-and-toric}). This isomorphism sends $[i',j',k']$, 
the class of the monomial $x^{i'} y^{j'} z^{k'}$, to the class of 
a loop in $T_G$ consisting of $i'$ $x$-arrows, 
$j'$ $y$-arrows and $k'$ $z$-arrows 
(\cite{Logvinenko-thesis}, Lemma 6.41).
Now $z$-tiled region is clearly contractible to a $1$-sphere, 
so its first homology is $\mathbb{Z}$. On Figure $\ref{figure-16a}$ 
we see that the loop $O_1 I_1 O_1$ formed by 
$x$-$(0,1)$- and $y$-$(0,1)$-charge lines lies within the $z$-tiled
region and wraps around it exactly once. Therefore its
class must generate the first homology of the $z$-tiled region. 
This class is $[\frac{x^a}{y^b}]$ and therefore the class of every loop 
contained within the $z$-tiled region must be a multiple of it. 
In particular $[\frac{x^{a_c}}{y^{b_c}}]$, which is the class of the loop 
$I_1 C I_1$. The claim follows. 

Similarly, denote by $a_2$ the length of the $x$-arrow path $I_2 Q$
and by $b_2$ the length of the $y$-arrow path $I_2 P$. Consider
the loop $I_1 P I_2 Q I_1$. Its homology class is $[\frac{x^{a_1 +
a_2}}{y^{b_1 + b_2}}]$. As this loop lies within the $z$-tiled 
region its class must be a multiple of $[\frac{x^a}{y^b}]$. 
As clearly $0 \leq a_2 < a$ and $0 \leq b_2 < b$ we conclude 
that $a_2 = (- a_1) \mod a$ and $b_2 = (- b_1) \mod b$.  

We can now proceed to the main proof. Suppose there is an edge incident to 
$e$ which is carved out by a ratio $x^{i'} : y^{j'} z^{k'}$ for some 
$i',j',k' \neq 0$. Denote by $X$ the vertex of $Q(G)$
corresponding to the common character of $x^{i'}$ and $y^{j'} z^{k'}$.
Assume first that $b \nmid b_1$ and therefore $P \neq I_2$. Then 
travelling along the $x$-arrow path which starts at $O_1$ we 
first encounter an arrow which vanishes at $E_e$ immediately after 
the vertex $P$. Arguing as in the proof of Proposition 
$\ref{prps-(3,3)-source-graph-to-case-1}$
we see that on one hand $X$ must lie on the $x$-arrow path $O_1 I_1 P$, 
while on the other hand it must lie somewhere within the $x$-tiled 
region or its boundary. On Figure $\ref{figure-16a}$ we see
that it is only possible if $X = O_1$ or $P$. We can't have $X =
O_1$ as then $i' = j' = k' = 0$, so $X = P$
and $i' = a + a_1$, $j' = b_2 = (- b_1) \mod b$ and $k' = c$. 
Assume now that $b \mid b_1$. Then $P = I_2$ and one can 
see on Figure $\ref{figure-16a}$ that the maximal path of 
$x$-arrows which starts at $O_1$ and in which no $x$-arrow
vanishes at $E_e$ is $O_1 I_1 I_2 O_2$. Arguing again as in the proof
of Proposition $\ref{prps-(3,3)-source-graph-to-case-1}$ we 
see that $X$ must belong both to $O_1 I_1 I_2 O_2$ and 
to the $x$-tiled region or its boundary. On Figure $\ref{figure-16a}$
we see that it is only possible if $X = O_1$, $I_2$ or $O_2$.
We can't have $X = O_1$ or $X = I_2$ as that would contradict
$i', j', k' \neq 0$. Therefore $X = O_2$ and the ratio is 
$x^{a + a_1 + a} : y^{b} z^{c}$. But this is impossible,
since $\frac{x^{a + a_1 + a}}{y^{b} z^{c}}$ decomposes as 
$\frac{x^{a + a_1}}{z^c}\; \frac{x^{a}}{y^{b}}$ and so can't
be carving out an edge of $\Sigma$ by 
Corollary $\ref{cor-primitivity-of-marking-ratios}$.
Similar argument for ratios of form $y^{j'} : x^{i'} z^{k'}$ shows 
that the only possibility is 
$y^{b + b_1} : x^{(- a_1) \mod a} z^{c}$ when $a \nmid a_1$. 

Suppose there is an edge incident to $e$ which is carved out by
a ratio $z^{k'} : x^{i'} y^{j'}$ for some $i',j',k' \neq 0$. Denote by $X$ 
the vertex corresponding to the common character of $z^{k'}$ and 
$x^{i'} y^{j'}$. As before, we see that on one hand $X$ must lie somewhere on
$z$-$(0,1)$-charge line $O_1 I_2$ and on the other hand it must lie
somewhere within the $z$-tiled region or its boundary. This is clearly only 
possible when $X = I_2$. Then $i' = c$, but due to non-contractibility 
of the $z$-tiled region we can no longer uniquely determine $j'$ and
$k'$. 

Suppose there is an edge incident to $e$ which is carved out by 
a ratio $x^{i'} : z^{k'}$ with $i',k' \neq 0$. Denote by $X$ the vertice
of $Q(G)$ corresponding to the common character of $x^{i'}$ and
$z^{k'}$. Then $X$ has to lie on both the $x$-arrow path $O_1 I_1 P$ and 
the $z$-arrow path $O_1 I_2$. 
From Figure $\ref{figure-16a}$ we see that this is only possible
when $X = P = I_2$, i.e. when $b \mid b_1$ and so $P$ coincides
with $I_2$. The ratio would then be $x^{a + a_1} : z^{c}$. A similar 
argument for ratios of form $y^{j'} : z^{k'}$ yields that we'd have 
to have $a \mid a_1$ and the ratio would have to be $y^{b+b1} : z^{c}$. 

Finally, suppose there is an edge incident to $e$ which is carved out
by a ratio $x^{i'} : y^{j'}$. Denote by $X$ the vertice
of $Q(G)$ corresponding to the common character of $x^{i'}$ and
$y^{j'}$. As before, we see $X$ would have to lie both on the
$x$-arrow path $O_1 I_1 P$ and on the $y$-arrow path $O_1 I_1 Q$. 
One possibility is always $X = I_1$, which yields the ratio 
$x^{a} : y^{b}$. As established above, any other intersection point
of $O_1 I_1 P$ and $O_1 I_1 Q$ would give rise to ratios of form
$x^{n'a} : y^{n'b}$ for some $n' \geq 2$. As the ratio marking
an edge in $\Sigma$ has to be primitive we conclude that the only 
possibility is $x^{a} : y^{b}$. 

Suppose $a \mid a_1$ and $b \mid a_2$. Then from the above
we see that the only ratios which could mark an edge incident
to $e$ would be:
\begin{align*} 
x^a : y^b, \quad y^{b + b_1} : z^c,  \quad z^c : x^{a+a_1}, \quad
z^c : x^{i'} y^{j'}.
\end{align*} 
for some $i',j' > 0$. 
Consulting the classification of the Proposition 
$\ref{prps-the-classification-of-the-vertices-of-the-triangulation}$ 
we see that $e$ must necessarily belong to Case
$\ref{case-one-line-from-vertex}(a)$, reproduced on Figure
$\ref{figure-16b}$, and we must have $i = a$, $j = b$,
$k_1=k_2=c$, $l = b + b_1$ and $m = a + a_1$.

Suppose $a \nmid a_1$ and $b \mid a_2$. Then the only ratios which
could mark an edge incident to $e$ would be: 
\begin{align*} 
x^a : y^b, \quad y^{b + b_1} : z^c x^{(-a_1)\mod a},  \quad 
x^{a+a_1} : z^c, \quad z^c : x^{i'} y^{j'}
\end{align*} 
for some $i',j' > 0$. 
Consulting the classification of the Proposition 
$\ref{prps-the-classification-of-the-vertices-of-the-triangulation}$
we see that $e$ must necessarily belong to Case
$\ref{case-one-line-from-vertex}(b)$, reproduced on Figure
$\ref{figure-16c}$ and we must have $i = a$, $j = b$,
$k_1=k_2=k_3=c$, $l = b + b_1$, $m = a + a_1$, $n = (-a_1) \mod a$. 
To compute $p$ and $q$ we use the following method. 
By construction of $\Sigma$ the 
lines carved out by the ratios $x^i : y^j$, $y^l : z^{k_3} x^{n}$ 
and $z^{k_1}:y^p x^q$ are parallel to sides of some regular triangle
and therefore themselves form a (degenerate) regular triangle (see
\cite{Craw-AnexplicitconstructionoftheMcKaycorrespondenceforAHilbC3}, 
Section 2 and \cite{Craw02}, Section 1.2). Three lines in $\Sigma$ are 
said to form a regular triangle if the product of ratios carving them out
(for some choice of one of the two mutually inverse Laurent monomials 
corresponding to each ratio) is $(x y z)^{r'}$ for some $r' \geq 0$. 
Such three lines must 
intersect at a triangle which has $r' + 1$ lattice points in each side,
with the degenerate case $r' = 0$ corresponding to the intersection 
being a point. Therefore in our case we must have 
\begin{align*}
\frac{x^i}{y^j} \; \frac{y^l}{z^{k3} x^{n}} \frac{z^{k1}}{y^p x^q} = 1.
\end{align*}
It follows that $p = l - j = b_1$ and $q = i - n = (a_1) \mod a$. 

The case $a \mid a_1$ and $b \nmid a_2$ is entirely analogus
to that of $a \nmid a_1$ and $b \mid a_2$.

Suppose that $a \nmid a_1$ and $b \nmid a_2$. Then the only ratios 
which could mark an edge incident to $e$ would be:
$$ x^a : y^b, \quad y^{b + b_1} : z^c x^{(-a_1)\mod a},  \quad 
x^{a+a_1} : z^c y^{(-b_1) \mod b}, \quad z^c : x^{i'} y^{j'}$$
for some $i',j' > 0$. Consulting all the possibilities for 
$e$ in the classification of the Proposition 
$\ref{prps-the-classification-of-the-vertices-of-the-triangulation}$
we see that $e$ could belong to either Case 
$\ref{case-one-line-from-vertex}(c)$ or Case 
$\ref{case-three-straight-lines}$. But were $e$ to belong to 
Case $\ref{case-three-straight-lines}$ it would have to be the
intersection point of three straight lines carved out by ratios
$x^{a+a_1} : z^c y^{(-b_1) \mod b}$, 
$y^{b + b_1} : z^c x^{(-a_1)\mod a}$ 
and $z^c : x^{r} y^{s}$ (see Figure $\ref{figure-05}$). 
These lines, by construction of $\Sigma$, 
form a regular triangle of side $0$ and therefore we would have 
to have: 
$$\frac{x^{a+a_1}}{z^c y^{(-b_1) \mod b}}
\frac{y^{b + b_1}}{z^c x^{(-a_1)\mod a}}
\frac{z^c}{x^{r} y^{s}} = 1.$$
This is impossible as the power of $z$ in the expression on LHS 
is clearly $-c$. We conclude that $e$ has to belong to Case
$\ref{case-one-line-from-vertex}(b)$ of the classification of the
Proposition 
$\ref{prps-the-classification-of-the-vertices-of-the-triangulation}$
and we must have  
\begin{align*}
&i = a, \quad j = b, \quad k_1=k_2=k_3=k4=c, \\
&l = b + b_1, \quad m = a + a_1, \quad n = (-a_1) \mod a, 
\quad r = (-b_1) \mod b.
\end{align*}
Computing $p$, $q$, $s$ and $t$ in the same way $p$ and $q$
were computed in the case $a \nmid a_1$ and $b \mid a_2$, we 
obtain:
$$p = b_1, \quad q = (a_1) \mod a, \quad s = (b_1) \mod b, \quad t =
a_1. $$

Finally, to obtain $e = \frac{1}{|G|}(bc, ac, |G| - bc - ac)$ 
we apply Lemma \ref{lemma-number-of-diamonds}. 
\end{proof}

\begin{prps}\label{prps-3-(1,2)-sources-to-case-3}
Let $e \in \mathfrak{E}$ be such that $E_e \subset \Except(Y)$. If 
the graph $SS_{\mathcal{M}, E_e}$ is as depicted on Figure 
\ref{figure-17a} then the vertex $e$ in the triangulation $\Sigma$ 
looks locally as depicted on Figure $\ref{figure-17b}$. 
The monomial ratios carving out the edges incident to $e$
can be computed in terms of the indicated lengths in 
$SS_{\mathcal{M}, E_e}$ as shown on Figure $\ref{figure-17b}$. 
The coordinates of $e$ in $L$ are $\frac{1}{|G|}(b c_3 + b_2 c -
b_2 c_3 , a c_2 + a_3 c - a_3 c_2, a b_3 + a_2 b - a_2 b_3)$. 
\end{prps}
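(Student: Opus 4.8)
The plan is to argue exactly as in the proofs of Propositions \ref{prps-(3,3)-source-graph-to-case-1} and \ref{prps-one-(2,1)-source-one-(1,2)-source-graph-to-case-2}: the combinatorics of $SS_{\mathcal{M}, E_e}$ dictates which monomial ratios are allowed to carve out the edges incident to $e$ in $\Sigma$, and together with the trichotomy of Proposition \ref{prps-the-classification-of-the-vertices-of-the-triangulation} this forces $e$ to be of Case \ref{case-three-straight-lines} with the exponents shown on Figure \ref{figure-17b}. The engine is the same: if an edge incident to $e$, sitting between triangles $\sigma$ and $\sigma'$, is carved out by $m : m'$ then Corollary \ref{cor-markings-belong-to-ggraphs} gives $m \in \Gamma_\sigma$ and $m' \in \Gamma_{\sigma'}$; reading $m$ and $m'$ as monomial paths in $Q(G)$ based at $\chi_0$, no arrow of these paths vanishes along $E_\sigma$ (resp. $E_{\sigma'}$), hence none vanishes along $E_e$. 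Since inside the $x$-tiled region of $SS_{\mathcal{M}, E_e}$ every $x$-oriented arrow vanishes along $E_e$, and likewise for $y$ and $z$, the vertex $X \in Q(G)$ realising the common character of $m$ and $m'$ is forced to lie simultaneously on a definite charge path and in the complement of the tiled region of the relevant orientation. Running through the finitely many shapes $m : m'$ can take --- the three-variable ratios $x^{i'} : y^{j'}z^{k'}$, $y^{j'} : x^{i'}z^{k'}$, $z^{k'} : x^{i'}y^{j'}$ and the two-variable ratios $x^{i'}:y^{j'}$, $y^{j'}:z^{k'}$, $z^{k'}:x^{i'}$ --- and locating $X$ on Figure \ref{figure-17a} each time, one whittles the list of admissible ratios down to the ones recorded on Figure \ref{figure-17b}.

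Two points need more care than in Proposition \ref{prps-(3,3)-source-graph-to-case-1}, and this is where I expect the real work to lie. First, the case analysis is longer: $e$ now carries up to six incident edges, so there are correspondingly more shapes of ratio to eliminate, and after narrowing the list one must still rule out that $e$ belongs to Case \ref{case-one-line-from-vertex} rather than Case \ref{case-three-straight-lines}; this is done, exactly as at the end of the proof of Proposition \ref{prps-one-(2,1)-source-one-(1,2)-source-graph-to-case-2}, by observing that a collapsing line would force a product of edge-ratios to equal $(xyz)^{r'}$ with $r' \geq 0$ while the exponent of one of the variables in that product is manifestly negative. Second, should any of the three tiled regions of Figure \ref{figure-17a} turn out to be non-contractible and wrap around $T_G$, the repeated intersections of charge paths issuing from $\chi_0$ are handled just as in Proposition \ref{prps-one-(2,1)-source-one-(1,2)-source-graph-to-case-2}, via the isomorphism $M/\mathbb{Z}(1,1,1) \xrightarrow{\sim} H_1(T_G,\mathbb{Z})$: any loop contained in a region that deformation retracts onto a circle has homology class a multiple of the class of the charge loop bounding that region, and this pins down the relevant path-length residues modulo $a$, $b$, $c$ and discards the spurious crossing points. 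I would set up, for each of the three regions, the analogues of the ``first crossing'' vertices $P$, $Q$ of that proof.

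Once the admissible ratios have been reduced to the list on Figure \ref{figure-17b}, $e$ is of Case \ref{case-three-straight-lines} and the exponents on the three lines through $e$ are read off directly; the exponents on the remaining edges at $e$ are then determined by the regular-triangle relation used in Proposition \ref{prps-one-(2,1)-source-one-(1,2)-source-graph-to-case-2}, namely that the product of the ratios carving out three concurrent lines forming a regular triangle equals $(xyz)^{r'}$ for some $r' \geq 0$ (here $r' = 0$), which turns each such relation into a linear equation for the unknowns. Finally, the coordinates of $e$ follow from Lemma \ref{lemma-number-of-diamonds}: $|G|$ times the $x$-coordinate of $e$ is the number of $x$-oriented arrows of $Q(G)$ vanishing along $E_e$, which is the number of $x$-oriented arrows inside the $x$-tiled region, and similarly for $y$ and $z$. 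Each of these regions is an L-shaped hexagon --- a union of two lattice parallelograms overlapping in a smaller one --- and decomposing it into unit parallelograms, each carrying exactly one arrow of the region's orientation as its diagonal, yields the counts $bc_3 + b_2 c - b_2 c_3$, $ac_2 + a_3 c - a_3 c_2$ and $ab_3 + a_2 b - a_2 b_3$, which is the claimed formula. The bulk of the labour, and the step most likely to hide a subtlety, is the bookkeeping: matching charge paths to the correct tiled regions, taming multiple intersections with the homology argument, and getting the unit-parallelogram counts of the hexagonal regions right.
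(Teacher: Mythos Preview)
Your approach is the paper's approach, but you have inverted the difficulty: this case is \emph{easier} than Proposition~\ref{prps-one-(2,1)-source-one-(1,2)-source-graph-to-case-2}, not harder. In the three-$(2,1)$-source graph all three tiled regions are contractible, so the homology argument is never invoked; the $x$-arrow path from $O_1$ consisting of arrows not vanishing along $E_e$ is simply the $x$-$(0,1)$-charge line $O_1I_1$, and likewise for $y$ and $z$. Running the six ratio shapes then shows that the two-variable ratios $x^{i'}:y^{j'}$, $y^{j'}:z^{k'}$, $z^{k'}:x^{i'}$ are \emph{impossible} (the two charge lines from $O_1$ meet only at $O_1$), leaving exactly the three ratios $x^a:y^{b_2}z^{c_3}$, $y^b:x^{a_3}z^{c_2}$, $z^c:x^{a_2}y^{b_3}$. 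Since Cases~\ref{case-meeting-of-champions} and~\ref{case-one-line-from-vertex} of Proposition~\ref{prps-the-classification-of-the-vertices-of-the-triangulation} both require at least one two-variable edge, Case~\ref{case-three-straight-lines} is forced immediately --- no product-of-ratios contradiction is needed, and no ``remaining edges'' await the regular-triangle relation, because the six edges at $e$ are the three straight lines, each carved out by one of the three ratios already found. Your arrow-counting for the coordinates via Lemma~\ref{lemma-number-of-diamonds} and the hexagonal decomposition is correct.
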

\begin{figure}[h]
\centering 
\subfigure[Three $(2,1)$-source graph] { \label{figure-17a} 
\includegraphics[scale=0.12]{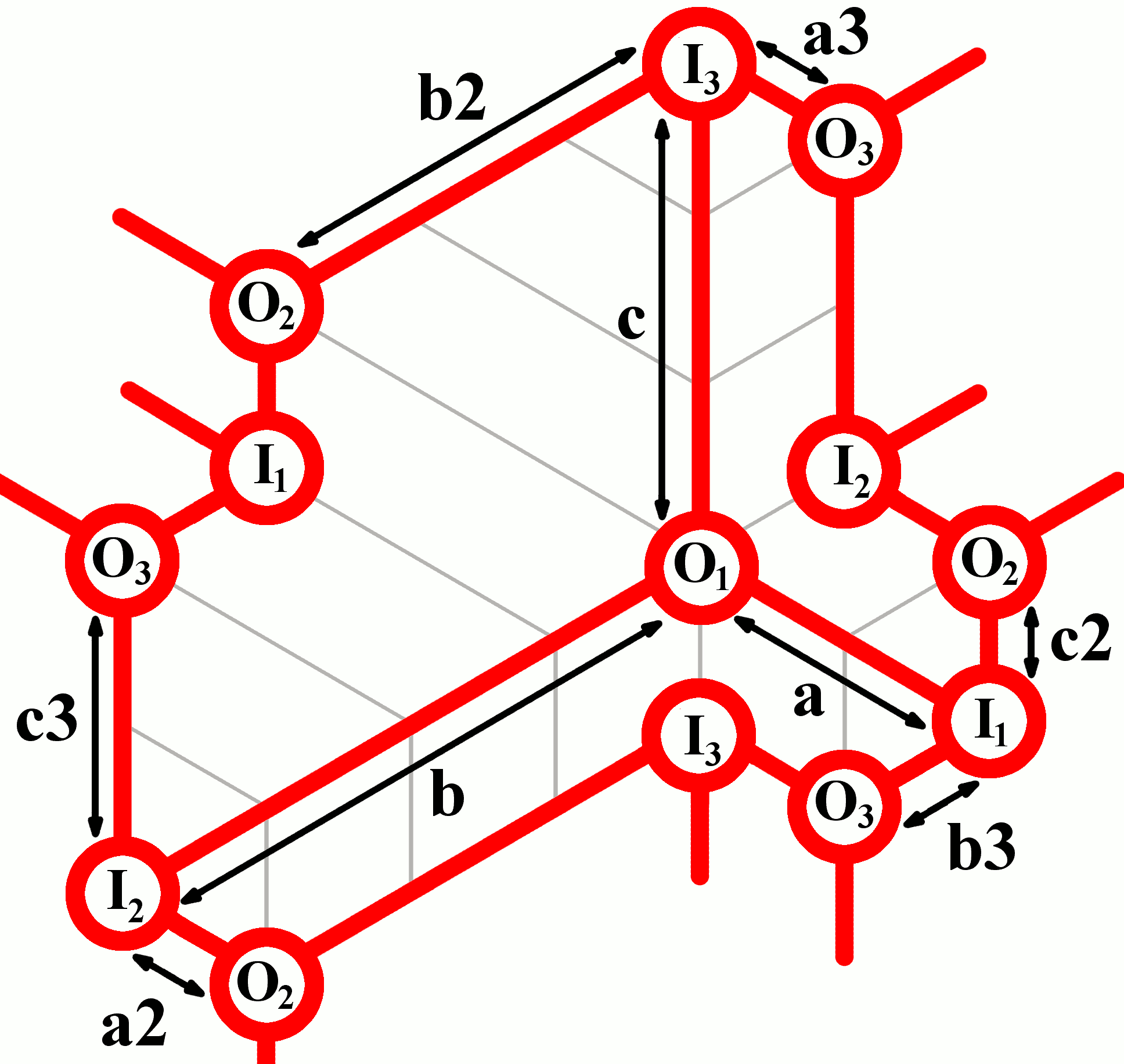}} \hspace{0.1cm}
\subfigure[Case $\ref{case-three-straight-lines}$ of Prps.
$\ref{prps-the-classification-of-the-vertices-of-the-triangulation}$] 
{\label{figure-17b} 
\raisebox{1.5cm}{\includegraphics[scale=0.35]{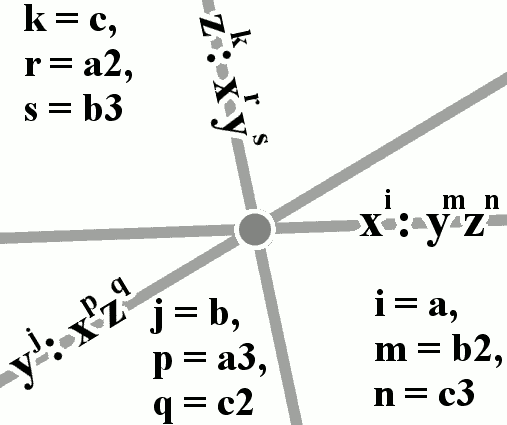}}}
\caption{The correspondence of Proposition
\ref{prps-3-(1,2)-sources-to-case-3}} \label{figure-17}
\end{figure}
\begin{proof}
Suppose there is an edge incident to 
$e$ which is carved out by ratio $x^{i'} : y^{j'} z^{k'}$ for some 
$i',j',k' \neq 0$. Denote by $X$ the vertex of $Q(G)$
corresponding to the common character of $x^{i'}$ and $y^{j'} z^{k'}$.
Arguing as in the proof of Proposition 
$\ref{prps-(3,3)-source-graph-to-case-1}$ 
we see that on one hand $X$ must lie on the $x$-$(0,1)$-charge line 
$O_1 I_1$, while on the other hand it must lie somewhere within 
the $x$-tiled region or its boundary. On Figure $\ref{figure-17a}$ we see
that it is only possible if $X = I_1$ and that the ratio would then 
have to be $x^{a} : y^{b_2} z^{c_3}$. Arguing similarly for ratios
of form $y^{j'} : x^{i'} z^{k'}$ and $z^{k'} : x^{i'} y^{j'}$ we
see that the only possibilities there are $y^{b} : x^{a_3} z^{c_2}$
and $z^{c} : x^{a_2} y^{b_3}$.

Suppose there is an edge incident to $e$ which is carved out 
by ratio $x^{i'} : y^{j'}$ for some $i',j' \neq 0$. Denote by $X$
the vertex corresponding to the common character of $x^{i'}$
and $y^{j'}$. Then $X$ must lie both on the $x$-$(0,1)$-charge
line $O_1 I_1$ and the $y$-$(0,1)$-charge line $O_1 I_2$. 
On Figure $\ref{figure-17a}$ we see that it is impossible 
unless $X = O_1$, but that would contradict $i', j' \neq 0$. 
Arguing similarly for ratios of form $y^{j'} : z^{k'}$ and
$z^{k'} : x^{i'}$ we see that they are impossible also. 

We conclude that the only ratios which could mark an edge incident
to $e$ would be:
\begin{align*} 
x^{a} : y^{b_2} z^{c_3}, \quad y^{b} : x^{a_3} z^{c_2}, 
z^{c} : x^{a_2} y^{b_3}
\end{align*}
Consulting the classification of the Proposition
$\ref{prps-the-classification-of-the-vertices-of-the-triangulation}$
we see that $e$ must necessarily belong to Case
$\ref{case-three-straight-lines}$, reproduced on Figure
$\ref{figure-17b}$, and that we must have
\begin{align*}
i = a, m = b_2, n = c_3, 
j = b, p = a_3, q = c_2,
k = c, r = a_2, s = b_3
\end{align*}

For $e = \frac{1}{|G|}(b c_3 + b_2 c - b_2 c_3 , a c_2 + a_3 c - a_3 c_2,
a b_3 + a_2 b - a_2 b_3)$ we apply Lemma \ref{lemma-number-of-diamonds}. 
\end{proof}

Since the sink source graph $SS_{\mathcal{M},E}$ of every exceptional
divisor $E \subset \Except(Y)$ is as depicted on either Figure
$\ref{figure-15a}$, Figure $\ref{figure-16a}$ or Figure $\ref{figure-17a}$, 
Propositions $\ref{prps-(3,3)-source-graph-to-case-1}$,
$\ref{prps-one-(2,1)-source-one-(1,2)-source-graph-to-case-2}$
and $\ref{prps-3-(1,2)-sources-to-case-3}$ add together to give
a following theorem:
\begin{theorem} \label{theorem-ssgraphs-to-divisor-types}
Let $e \in \mathfrak{E}$ be such that $E_e \subset \Except(Y)$. 
Then:
\begin{enumerate}
\item The graph $SS_{\mathcal{M},E_e}$ looks as on $\ref{figure-15a}$
if and only if the vertex $e$ in $\Sigma$ belongs to Case 
$\ref{case-meeting-of-champions}$ of the classification in Proposition
$\ref{prps-the-classification-of-the-vertices-of-the-triangulation}$. 
\item The graph $SS_{\mathcal{M},E_e}$ looks as on $\ref{figure-16a}$
if and only if the vertex $e$ in $\Sigma$ belongs to Case
$\ref{case-one-line-from-vertex}$ of the classification in
Proposition 
$\ref{prps-the-classification-of-the-vertices-of-the-triangulation}$. 
\item The graph $SS_{\mathcal{M},E_e}$ looks as on $\ref{figure-17a}$
if and only if the vertex $e$ in $\Sigma$ belongs to Case
$\ref{case-three-straight-lines}$ of the classification in
Proposition 
$\ref{prps-the-classification-of-the-vertices-of-the-triangulation}$.
\end{enumerate} \end{theorem}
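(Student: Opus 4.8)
The plan is to obtain Theorem~\ref{theorem-ssgraphs-to-divisor-types} as a purely formal consequence of Propositions~\ref{prps-(3,3)-source-graph-to-case-1}, \ref{prps-one-(2,1)-source-one-(1,2)-source-graph-to-case-2} and \ref{prps-3-(1,2)-sources-to-case-3}, which together already supply one implication of each of the three stated equivalences. First I would fix the two trichotomies that are being matched. Since $E_e \subset \Except(Y)$, the point $e$ is interior to the junior simplex, so Proposition~\ref{prps-the-classification-of-the-vertices-of-the-triangulation} places the vertex $e$ of $\Sigma$ in exactly one of Cases~\ref{case-meeting-of-champions}, \ref{case-one-line-from-vertex}, \ref{case-three-straight-lines}. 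On the other side, \cite{CautisLogvinenko-ADerivedApproachToGeometricMcKayCorrespondence} (Proposition~4.14 and Corollary~4.16, recalled in Section~\ref{section-mckay-quiver-and-sink-source-graphs}) guarantees that $SS_{\mathcal{M}, E_e}$ has, up to the rotations already noted, exactly one of the three shapes of Figures~\ref{figure-15a}, \ref{figure-16a}, \ref{figure-17a}. Thus both ``the shape of $SS_{\mathcal{M}, E_e}$'' and ``the Case of $e$'' are well-defined three-valued invariants of an exceptional divisor.

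Next I would record the three forward implications already proved: Proposition~\ref{prps-(3,3)-source-graph-to-case-1} gives shape~\ref{figure-15a}~$\Rightarrow$~Case~\ref{case-meeting-of-champions}; Proposition~\ref{prps-one-(2,1)-source-one-(1,2)-source-graph-to-case-2} gives shape~\ref{figure-16a}~$\Rightarrow$~Case~\ref{case-one-line-from-vertex}, since Figures~\ref{figure-16b}, \ref{figure-16c}, \ref{figure-16d} and \ref{figure-16e} are all sub-cases of Case~\ref{case-one-line-from-vertex}; and Proposition~\ref{prps-3-(1,2)-sources-to-case-3} gives shape~\ref{figure-17a}~$\Rightarrow$~Case~\ref{case-three-straight-lines}. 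The converses now follow by contraposition and pigeonhole. For instance, if $e$ is in Case~\ref{case-meeting-of-champions} then $SS_{\mathcal{M}, E_e}$ has one of the three shapes; it cannot be shape~\ref{figure-16a} (that would force Case~\ref{case-one-line-from-vertex}) nor shape~\ref{figure-17a} (that would force Case~\ref{case-three-straight-lines}), since $e$ lies in exactly one Case; hence it is shape~\ref{figure-15a}. The same reasoning for Cases~\ref{case-one-line-from-vertex} and \ref{case-three-straight-lines} completes items~(2) and~(3).

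I do not expect a genuine obstacle here: once the two trichotomies and the three implications are in hand, the argument is a one-line combinatorial shuffle, and all the real content lives in the proofs of Propositions~\ref{prps-(3,3)-source-graph-to-case-1} through \ref{prps-3-(1,2)-sources-to-case-3}. The single point deserving a moment's attention is that both classifications must be genuine partitions of the set of exceptional divisors --- every $E_e$ has one and only one sink-source shape, and its vertex falls into one and only one of the three Cases --- which is precisely what Proposition~\ref{prps-the-classification-of-the-vertices-of-the-triangulation} and the cited results from \cite{CautisLogvinenko-ADerivedApproachToGeometricMcKayCorrespondence} assert.
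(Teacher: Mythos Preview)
Your proposal is correct and matches the paper's own argument essentially verbatim: the paper states that since every sink-source graph has one of the three shapes, Propositions~\ref{prps-(3,3)-source-graph-to-case-1}, \ref{prps-one-(2,1)-source-one-(1,2)-source-graph-to-case-2} and \ref{prps-3-(1,2)-sources-to-case-3} ``add together'' to give the theorem, which is exactly the trichotomy-plus-pigeonhole reasoning you spell out.
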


We are now ready to prove Theorem
$\ref{theorem-reids-recipe-knows-everything}$. 
In the course of the proof we repeatedly use the fact that
for every $\chi \in G^\vee$ the transform $\Psi(\mathcal{O}_0 \otimes
\chi)$ is a shift of a coherent sheaf, that is - a complex all of 
whose cohomology sheaves are zero except for one 
(\cite{CautisLogvinenko-ADerivedApproachToGeometricMcKayCorrespondence}, 
Theorem $1.1$). If it is $k$-th
cohomology sheaf of $\Psi(\mathcal{O}_0 \otimes \chi)$ that 
doesn't vanish, we say that $\Psi(\mathcal{O}_0 \otimes \chi)$ is 
supported in degree $k$. 

We shall also need the following auxiliary results:

\begin{lemma}
\label{lemma-support-is-connected-and-consists-of-toric-orbit-closures}
For any $\chi \in G^\vee$ the support of $\Psi(\mathcal{O}_0 \otimes \chi)$ 
is connected and each of its irreducible components is either
a toric divisor $E_i$ or a toric curve $E_{i,j}$ with $i,j \in \mathfrak{E}$. 
\end{lemma}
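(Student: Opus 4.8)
The plan is to exploit the description of $\Psi(\mathcal{O}_0 \otimes \chi)$ as the total complex of a skew-commutative cube of line bundles on $Y$ coming from the subrepresentation $\hex(\chi^{-1})_{\tilde{\mathcal{M}}}$ of $Q(G)_{\tilde{\mathcal{M}}}$, as recalled in the proof of Proposition \ref{sinks-and-sources-iff-in-transforms-support}. Concretely, $\Psi(\mathcal{O}_0 \otimes \chi)$ is the convolution of a complex of the form $\mathcal{L}_a \to \mathcal{L}_{b_1} \oplus \mathcal{L}_{b_2} \oplus \mathcal{L}_{b_3} \to \mathcal{L}_{c_1} \oplus \mathcal{L}_{c_2} \oplus \mathcal{L}_{c_3} \to \mathcal{L}_d$ where each term is a line bundle on $Y$; in particular the complex has amplitude $[-3,0]$ and, since $\mathcal{O}_0$ is supported at the origin and $\Psi$ is a Fourier--Mukai transform with kernel supported on $Y \times_{\mathbb{C}^3/G} \{0\}$, every cohomology sheaf of $\Psi(\mathcal{O}_0 \otimes \chi)$ is supported on the exceptional fibre $\pi^{-1}(0) = \bigcup_{e \in \mathfrak{E}} E_e$ restricted to the exceptional locus, hence on a union of the toric orbit closures $E_i$ and $E_{i,j}$. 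This already gives the second assertion at the level of sets once we know the support is a closed subscheme of $\Except(Y)$; but since the cohomology sheaves are equivariant for the residual torus action on $Y$, their supports are torus-invariant closed subsets, and every torus-invariant closed subset of $Y$ is a union of orbit closures $E_\tau = \overline{S_\tau}$. The orbit closures appearing inside $\Except(Y)$ are exactly the $E_e$, the $E_{e,f}$, and the torus-fixed points $E_\sigma$; so the only thing to rule out is an isolated fixed point $E_\sigma$ appearing as an irreducible component.

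First I would record the torus-invariance: $\mathcal{M}$, hence $\tilde{\mathcal{M}}$, is a $T$-equivariant sheaf on $Y \times \mathbb{C}^3$ (the whole toric construction of Section \ref{section-ghilb-and-toric} is torus-equivariant), so each $H^k(\Psi(\mathcal{O}_0 \otimes \chi))$ is a $T$-equivariant coherent sheaf on $Y$ and its support is a $T$-invariant closed subscheme. Second, I would invoke Proposition \ref{sinks-and-sources-iff-in-transforms-support} together with Theorem \ref{theorem-ssgraphs-to-divisor-types}: a divisor $E_e$ lies in the support of some cohomology of $\Psi(\mathcal{O}_0 \otimes \chi)$ precisely when $\chi$ (or $\chi^{-1}$) is a sink or source in $SS_{\mathcal{M},E_e}$, and the three possible shapes of sink-source graphs are understood. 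To exclude a zero-dimensional component $E_\sigma = \{pt\}$: if $E_\sigma$ were an irreducible component of $\supp H^k(\Psi(\mathcal{O}_0 \otimes \chi))$ that is isolated, then on the affine chart $A_\sigma \cong \mathbb{C}^3$ the complex $\Psi(\mathcal{O}_0 \otimes \chi)|_{A_\sigma}$ would be a complex of free modules over $\mathbb{C}[x,y,z]$ whose only nonvanishing cohomology is a finite-length module supported at the origin; I would argue via the vanishing pattern of the maps $\alpha_{\chi,x}$ on $A_\sigma$ — each such map is multiplication by a monomial $r'_\chi/r_\chi$, which is either a unit or vanishes along one of the three coordinate hyperplanes through the origin — that such a complex with isolated finite-length cohomology cannot occur among the cube complexes arising from $\hex(\chi^{-1})$, because at least one of the three outgoing (or incoming) maps at the relevant vertex of the cube is an isomorphism near a generic point of some $E_{e}$ through $E_\sigma$, forcing the cohomology to extend along that $E_e$.

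For connectedness I would use the fact that $\mathcal{O}_0 \otimes \chi$ is a sheaf supported at a single (connected) point and that Fourier--Mukai transforms with a kernel whose support maps properly and with connected fibres preserve connectedness of supports; alternatively, and more in keeping with the combinatorial tone of the paper, I would note that the total complex $\Psi(\mathcal{O}_0 \otimes \chi)$ is the convolution of a bounded complex of line bundles, so its support equals the union of the loci where consecutive maps fail to be exact, and the sink-source graph analysis shows these loci are the divisors $E_e$ for $e$ ranging over a connected sub-configuration of $\mathfrak{E}$ (the vertices swept out by the charge lines and tile regions of $SS_{\mathcal{M},E}$ form a connected region of the junior simplex), together with curves and points lying on them. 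The main obstacle is the exclusion of isolated zero-dimensional components: this requires a genuinely local computation on some chart $A_\sigma$ showing that the Koszul-type cube complex cannot degenerate to have only punctual cohomology, and it is here that one must use the specific combinatorics of which arrows in $\hex(\chi^{-1})$ vanish along which divisors — essentially re-deriving, in this local setting, that a sink or source in $SS_{\mathcal{M},E}$ is never an isolated vertex but always lies on a charge line, so that the corresponding divisor $E$ drags the cohomology along with it.
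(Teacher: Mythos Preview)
Your identification of the cube complex and torus-invariance is correct and matches the paper's setup, but you miss both of the paper's key shortcuts and leave the two hard steps unfinished.

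For connectedness, the paper gives a one-line argument you overlook: since $\Psi$ is an equivalence of derived categories,
\[
\eend_{D(Y)}\bigl(\Psi(\mathcal{O}_0\otimes\chi)\bigr)=\eend_{D^G(\mathbb{C}^3)}(\mathcal{O}_0\otimes\chi)=\mathbb{C},
\]
and an object with $\eend=\mathbb{C}$ has connected support. Your option (a) about Fourier--Mukai transforms preserving connectedness is not a standard fact and would itself need proof; your option (b), that the relevant $e\in\mathfrak{E}$ form a connected sub-configuration of the junior simplex, is exactly the kind of statement this lemma is used to help establish later, so invoking it here is at best circular and in any case unproved.

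For excluding a fixed-point component, the paper again avoids all local computation: once connectedness is known, a zero-dimensional irreducible component must be the \emph{only} component (a point disjoint from all other components disconnects the support). Then $\Psi(\mathcal{O}_0\otimes\chi)$ would be a shift of a skyscraper sheaf $\mathcal{O}_p$, and applying $\Phi=\Psi^{-1}$ gives a $G$-cluster, not $\mathcal{O}_0\otimes\chi$. Your proposed route---a chart-by-chart analysis of when the cube complex can have punctual cohomology---is exactly what you flag as ``the main obstacle'', and you do not carry it out; moreover it attempts to prove a stronger statement (no punctual component among possibly many) than what is needed. The paper's two-step reduction (connectedness first, then $\Phi$ of a point sheaf is a $G$-cluster) bypasses this entirely.
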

\begin{proof}
By \cite{CautisLogvinenko-ADerivedApproachToGeometricMcKayCorrespondence}, 
Prop. $4.6$ we have it that $\Psi(\mathcal{O}_0 \otimes \chi)$ is the total 
complex of the skew-commutative cube of line bundles corressponding
to the subprepresentation $\hex(\chi^{-1})_{\tilde{\mathcal{M}}}$ of 
the associated representation $Q(G)_{\tilde{\mathcal{M}}}$. The
subquiver $\hex(\chi^{-1})$ is just the hexagonal subquiver
consisting of the six triangles in $Q(G)$ which contain $\chi^{-1}$. 
The support of $\Psi(\mathcal{O}_0 \otimes \chi)$ can then be computed 
with Lemma 3.1 of 
\cite{CautisLogvinenko-ADerivedApproachToGeometricMcKayCorrespondence}, 
which expresses it in terms of the vanishing divisors of the 
arrows of $\hex(\chi^{-1})_{\tilde{\mathcal{M}}}$. Recall now that for 
any arrow $q \in Q(G)$ its vanishing divisor $B_q$ in $\tilde{M}$ is 
of form $\sum_{f \in \mathfrak{E}}  b_{f} E_{f}$ with $b_{f} = 0,1$. With 
this in mind it follows from  
\cite{CautisLogvinenko-ADerivedApproachToGeometricMcKayCorrespondence},
Lemma 3.1, that every irreducible component of $\supp\left(\Psi(\mathcal{O}_0
\otimes \chi)\right)$ is an intersection of form $E_{f_1} \cap \dots
\cap E_{f_k}$ for $k \in \{1,2,3\}$ . Such
intersection is non-empty if and only if the cone $\sigma = \left<
f_1, \dots, f_k \right>$ is in the fan $\mathfrak{F}$ of $Y$. In which 
case it is precisely the toric orbit closure $E_\sigma$. We conclude
that each irreducible component of $\supp\left(\Psi(\mathcal{O}_0
\otimes \chi)\right)$ is either a toric divisor $E_{i}$, a toric curve
$E_{i,j}$ or a toric fixed point $E_{i,j,k}$ with $i,j,k \in
\mathfrak{E}$.

On the other hand, since $\Psi$ is an equivalence of derived
categories
$$ \eend_{D(Y)} \Psi(\mathcal{O}_0 \otimes \chi) =
\eend_{D^G(\mathbb{C}^3)} \mathcal{O}_{0} \otimes \chi =
\mathbb{C}. $$
Hence $\supp\left(\Psi(\mathcal{O}_0
\otimes \chi)\right)$ is connected. It can't therefore have
a toric fixed point as an irreducible component 
unless it is the only component. Which is impossible, as
then $\Psi(\mathcal{O}_0 \otimes \chi)$ would be a shift of a point
sheaf, but we know that \cite{BKR01} equivalence $\Phi$, of which 
$\Psi$ is the inverse, sends every point sheaf on $Y$ to 
a $G$-cluster on $\mathbb{C}^3$. The claim now follows.
\end{proof}

\begin{lemma} \label{lemma-divisor-in-supp-h1-iff-two-marked-curves}
Let $e \in \mathfrak{E}$ be such that $E_e \subset \Except(Y)$. For
any $\chi \in G^\vee$ the divisor $E_e$ belongs to the support
of $H^{-1}\left(\Psi(\mathcal{O}_0 \otimes \chi)\right)$
if and only if $E_e$ contains two or more curves marked by $\chi$. 
\end{lemma}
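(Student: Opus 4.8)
The plan is to reduce the statement to a purely local problem at the vertex $e$ and then settle it from the classification already in place. By Proposition~\ref{sinks-and-sources-iff-in-transforms-support}(2), $E_e$ lies in $\supp H^{-1}(\Psi(\mathcal{O}_0\otimes\chi))$ exactly when $\chi$ is a source of $SS_{\mathcal{M},E_e}$, so it suffices to prove that $\chi$ is a source of $SS_{\mathcal{M},E_e}$ if and only if two or more of the toric curves contained in $E_e$ are marked by $\chi$. Now the toric curves contained in $E_e$ are precisely the $\mathbb{P}^1$'s $E_{e,f}=E_e\cap E_f$ indexed by the edges $(e,f)$ of $\Sigma$ incident to $e$, and $E_{e,f}$ carries the marking $\chi$ iff $\chi$ is the common character of the coprime regular monomials carving out $\langle e,f\rangle$. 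Thus the lemma is equivalent to the assertion that the characters marking two or more edges of $\Sigma$ incident to $e$ are exactly the sources of $SS_{\mathcal{M},E_e}$.

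I would establish this by the trichotomy of Theorem~\ref{theorem-ssgraphs-to-divisor-types}, according to which of Figures~\ref{figure-15a},~\ref{figure-16a},~\ref{figure-17a} the graph $SS_{\mathcal{M},E_e}$ resembles. In each case Propositions~\ref{prps-(3,3)-source-graph-to-case-1},~\ref{prps-one-(2,1)-source-one-(1,2)-source-graph-to-case-2}, and~\ref{prps-3-(1,2)-sources-to-case-3} already list the monomial ratios carving all edges of $\Sigma$ incident to $e$, while the shape of the graph --- once $O_1$ is identified with $\chi_0$ and the charge incidences at each labelled vertex are read off --- exhibits the sources; the task is then to compare the two lists, character by character. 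One remark makes the comparison transparent whenever genuine straight lines pass through $e$: if $f,e,f'$ are collinear in $\Delta$ then $\langle 0,e,f\rangle=\langle 0,e,f'\rangle$ as $2$-planes in $L$, so the two edges of a straight line through $e$ are carved by a single ratio and hence carry a single marking, contributing a pair of identically marked curves inside $E_e$.

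For Figure~\ref{figure-15a} the three incident edges are carved by $x^a:y^b$, $y^b:z^c$, $z^c:x^a$, and since $x^a$, $y^b$, $z^c$ share one character $\chi$ in a ``meeting of champions'' all three curves in $E_e$ carry $\chi$, which is the unique $(3,3)$-source; no other character marks an incident edge and no other vertex is a source. For Figure~\ref{figure-17a} the six incident edges form three genuine straight lines through $e$, carved by $x^a:y^{b_2}z^{c_3}$, $y^b:x^{a_3}z^{c_2}$, $z^c:x^{a_2}y^{b_3}$, each of which marks two curves in $E_e$ by the collinearity remark; the three resulting characters are the vertices $I_1,I_2,I_3$ of that figure, which, as the source ends of the three $(0,1)$-charge lines meeting $O_1=\chi_0$, are exactly the three $(2,1)$-sources, while no fourth character marks two of the six edges --- this is excluded by Corollary~\ref{cor-primitivity-of-marking-ratios} and the charge-line geometry, as in the proof of Proposition~\ref{prps-3-(1,2)-sources-to-case-3}.

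The delicate case is Figure~\ref{figure-16a}, which breaks into the four sub-cases of Figures~\ref{figure-16b}--\ref{figure-16e} according to whether $a\mid a_1$ and $b\mid b_1$, and in which the $z$-tiled region is non-contractible. Here I would reuse the computation $H_1(T_G)\cong M/\mathbb{Z}(1,1,1)$ from the proof of Proposition~\ref{prps-one-(2,1)-source-one-(1,2)-source-graph-to-case-2}: any additional intersection of the $x$- and $y$-charge paths inside the $z$-region produces only non-primitive ratios of the form $x^{n'a}:y^{n'b}$ with $n'\ge 2$, which by Corollary~\ref{cor-primitivity-of-marking-ratios} cannot carve an edge of $\Sigma$, so no spurious marked curve is created. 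Granting this, comparing the ratios of Proposition~\ref{prps-one-(2,1)-source-one-(1,2)-source-graph-to-case-2} with Reid's marking rule for Case~\ref{case-one-line-from-vertex} shows that in every sub-case exactly two characters mark two incident edges each --- the character marking the unique line through $e$ emanating from a vertex of $\Delta$ (two of the incident edges), and the character $\chi'$ which by that rule marks precisely two of the remaining incident edges --- and these two are the $(1,2)$- and $(2,1)$-sources, whereas all other incident edges carry pairwise distinct markings. Matching against Proposition~\ref{sinks-and-sources-iff-in-transforms-support}(2) then finishes the proof. I expect this sub-case analysis for Figure~\ref{figure-16a}, together with verifying that no extra repeated marking occurs in the valency-$5$ and valency-$6$ configurations of Figure~\ref{figure-02}, to be the only real work.
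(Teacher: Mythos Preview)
Your approach is essentially the paper's own: reduce via Proposition~\ref{sinks-and-sources-iff-in-transforms-support}(2) to the statement that $\chi$ is a source in $SS_{\mathcal{M},E_e}$ iff $\chi$ marks two or more edges incident to $e$, and then verify this by running through the three possible shapes of $SS_{\mathcal{M},E_e}$ using the explicit data of Propositions~\ref{prps-(3,3)-source-graph-to-case-1}--\ref{prps-3-(1,2)-sources-to-case-3}. The paper compresses the entire case analysis into a single sentence (``by inspection of this data''), whereas you spell out the inspection; the content is the same.
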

\begin{proof}
By Proposition \ref{sinks-and-sources-iff-in-transforms-support}
the divisor $E_e$ belongs to $\supp H^{-1}\left(\Psi(\mathcal{O}_0
\otimes \chi)\right)$ if and only if $\chi$ is a source vertex 
in $SS_{M,E_e}$. Figures $\ref{figure-15}$-$\ref{figure-17}$ list
all possible shapes of $SS_{\mathcal{M},E_e}$ together with
the corresponding toric fans of $e$. By inspection of this data
we see that $\chi$ is a source vertex in $SS_{\mathcal{M},E_e}$ 
if and only $\chi$ marks two or more edges incident to $e$ in 
the toric fan. Each of these edges corresponds to a toric curve
contained in $E_e$ and so the claim follows. 
\end{proof}

\begin{lemma} \label{lemma-support-h1-doesn't-contain-toric-curves}
Each of the irreducible components of 
$\supp H^{-1}\left(\Psi(\mathcal{O}_0 \otimes \chi)\right)$
is a toric divisor $E_e$ for some $e \in \mathfrak{E}$.  
\end{lemma}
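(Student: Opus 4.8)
The plan is to argue by contradiction, reducing the statement to a purely local computation of the transform along a toric curve. If $H^{-1}(\Psi(\mathcal{O}_0\otimes\chi))=0$ there is nothing to prove, so assume it is non-zero. By Theorem $1.1$ of \cite{CautisLogvinenko-ADerivedApproachToGeometricMcKayCorrespondence} the transform $\Psi(\mathcal{O}_0\otimes\chi)$ is then concentrated in degree $-1$, say $\Psi(\mathcal{O}_0\otimes\chi)=\mathcal{F}[1]$ with $\mathcal{F}$ coherent, so that $\supp\mathcal{F}=\supp H^{-1}(\Psi(\mathcal{O}_0\otimes\chi))$ and $H^{0}=H^{-2}=0$. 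By Lemma \ref{lemma-support-is-connected-and-consists-of-toric-orbit-closures} every irreducible component of $\supp\mathcal{F}$ is a toric divisor $E_e$ or a toric curve $E_{e,f}$. Suppose for contradiction that some toric curve $C=E_{e,f}$ is an irreducible component of $\supp\mathcal{F}$. Since $C$ has codimension two and is properly contained in either divisor through it, neither $E_e$ nor $E_f$ can lie in $\supp\mathcal{F}$; combining this with $H^{0}=H^{-2}=0$ and Proposition \ref{sinks-and-sources-iff-in-transforms-support}, the vertex $\chi$ is neither a sink nor a source in $SS_{\mathcal{M},E_e}$ or in $SS_{\mathcal{M},E_f}$, whence by Lemma \ref{class-in-a-family-from-a-class-in-its-dual} the vertex $\chi^{-1}$ is a charge or a tile in each of $SS_{\tilde{\mathcal{M}},E_e}$ and $SS_{\tilde{\mathcal{M}},E_f}$ (Figures \ref{figure-10}--\ref{figure-11}).

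Next I would localise at the generic point $\eta$ of $C$. As $Y$ is smooth and $C$ has codimension two, $R:=\mathcal{O}_{Y,\eta}$ is a two-dimensional regular local ring with regular parameters $t_e,t_f$ cutting out $E_e$ and $E_f$. By Proposition $4.6$ of \cite{CautisLogvinenko-ADerivedApproachToGeometricMcKayCorrespondence}, $\Psi(\mathcal{O}_0\otimes\chi)$ is the total complex of the skew-commutative cube of line bundles attached to $\hex(\chi^{-1})_{\tilde{\mathcal{M}}}$; trivialising its eight line bundles over $\spec R$ turns this into a complex $K^{\bullet}$ of free $R$-modules in degrees $-3,\dots,0$ whose differentials are matrices in the twelve edge maps of the cube, and each such edge map is, up to a unit of $R$, a monomial $t_e^{\,\varepsilon_e}t_f^{\,\varepsilon_f}$ with $\varepsilon_e,\varepsilon_f\in\{0,1\}$; here $\varepsilon_e$ (respectively $\varepsilon_f$) is $1$ exactly when the corresponding arrow of $\hex(\chi^{-1})$ vanishes along $E_e$ (respectively $E_f$), which is read off from the charge-or-tile type of $\chi^{-1}$ in $SS_{\tilde{\mathcal{M}},E_e}$ and $SS_{\tilde{\mathcal{M}},E_f}$ via Figures \ref{figure-10}--\ref{figure-11}. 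The goal is then to show $H^{-1}(K^{\bullet})=0$ in every case, contradicting $C\subseteq\supp\mathcal{F}$.

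For the bulk of the cases this is immediate: whenever one of the three orientation classes of edges of the cube consists entirely of units of $R$, the cube is the cone of an isomorphism of complexes between its two opposite faces and is therefore acyclic. This is automatic when both $E_e$ and $E_f$ contribute tiles, and holds for many of the tile--charge pairs as well, since an $s$-tile makes only the $s$-oriented arrows carry a factor of the relevant parameter. The remaining pairs --- those in which $\chi^{-1}$ is a charge of an inconvenient orientation relative to at least one of $E_e$, $E_f$ --- form a short list, cut down further by the fact that $E_e$ and $E_f$ are adjacent, so $\chi^{-1}$ can only carry charge-or-tile types simultaneously realised in two sink-source graphs of the three shapes of Figures \ref{figure-15a}, \ref{figure-16a} and \ref{figure-17a} (classified by Theorem \ref{theorem-ssgraphs-to-divisor-types}). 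For each such pair one computes $H^{-1}(K^{\bullet})$ directly from the small monomial matrices, using that a charge type keeps the two spoke arrows of its orientation invertible at $\eta$, which lets one Gauss-eliminate that pair of edges and reduce to a $2\times2$ monomial cokernel which vanishes. In every case $H^{-1}(K^{\bullet})=0$, so $C\not\subseteq\supp\mathcal{F}$, a contradiction; hence $\supp\mathcal{F}=\supp H^{-1}(\Psi(\mathcal{O}_0\otimes\chi))$ has no toric-curve component and, by Lemma \ref{lemma-support-is-connected-and-consists-of-toric-orbit-closures}, each of its irreducible components is a toric divisor $E_e$.

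The step I expect to be the main obstacle is the last one: translating the arrow-vanishing pattern of $\hex(\chi^{-1})$ into the monomial cube over $R$ for each admissible charge/tile pair and checking that $H^{-1}$ vanishes --- in particular keeping the dictionary between the exponents $\varepsilon_e,\varepsilon_f$ and Figures \ref{figure-10}--\ref{figure-11} straight, and using the adjacency of $E_e$ and $E_f$ to keep the list of pairs finite and small.
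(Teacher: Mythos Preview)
Your strategy diverges from the paper's and, as stated, has a genuine gap.

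The paper does not try to show that $H^{-1}$ vanishes at the generic point of the hypothetical curve component. Instead it invokes the explicit three-step filtration of $H^{-1}\bigl(\Psi(\mathcal{O}_0\otimes\chi)\bigr)$ from \cite{CautisLogvinenko-ADerivedApproachToGeometricMcKayCorrespondence}, Lemma~3.1(2). A curve $E_{e,f}$ can only appear as an irreducible component of $\supp H^{-1}$ through one of the three filtration quotients, and this forces very specific arrow-vanishing patterns in $\hex(\chi)_{\mathcal{M}}$: after excluding sinks and sources (exactly as you do), one finds for instance that $\chi$ must be an $x$-$(0,1)$-charge for $E_e$ while being an $x$-tile, a $y$-$(1,0)$-charge, or a $z$-$(1,0)$-charge for $E_f$. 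The paper then checks that in each of these configurations $E_{e,f}\subset D^1_{23}\cap D^2_{13}\cap D^3_{12}$, hence $E_{e,f}\subset\supp H^{-2}$ by Lemma~3.1(3) of \cite{CautisLogvinenko-ADerivedApproachToGeometricMcKayCorrespondence}, contradicting single-degree support.

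The point is that in those residual configurations $H^{-1}$ is \emph{genuinely non-zero} along $E_{e,f}$ --- that is precisely what the filtration computation says --- so your target ``$H^{-1}(K^{\bullet})=0$ in every case'' is false there. The contradiction does not come from $H^{-1}$ vanishing; it comes from $H^{-2}$ failing to vanish. Your localisation set-up is fine, and your cone argument correctly disposes of all the tile--tile pairs and many tile--charge pairs, but once you reach the residual charge/tile combinations you cannot Gauss-eliminate your way to $H^{-1}(K^{\bullet})=0$; you must instead observe that the same monomial pattern forces $H^{-2}(K^{\bullet})\neq 0$ at $\eta$, which contradicts the global vanishing of $H^{-2}$ you imported from Theorem~1.1 of \cite{CautisLogvinenko-ADerivedApproachToGeometricMcKayCorrespondence}. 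In other words, in the hard cases the dichotomy is ``either $H^{-1}(K^{\bullet})=0$ or $H^{-2}(K^{\bullet})\neq 0$'', not simply the first alternative. Adjust your endgame accordingly and the argument goes through; the paper's use of the filtration is just a cleaner bookkeeping device for the same dichotomy.
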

\begin{proof}
As in the proof of Lemma 
\ref{lemma-support-is-connected-and-consists-of-toric-orbit-closures}
we use Lemma $3.1(2)$ of 
\cite{CautisLogvinenko-ADerivedApproachToGeometricMcKayCorrespondence}
to compute the support of $H^{-1}\left(\Psi(\mathcal{O}_0 \otimes \chi)\right)$
in terms of the vanishing divisors of the arrows in
$\hex(\chi^{-1})_{\tilde{\mathcal{M}}}$. 
Translating from the language of $Q(G)_{\tilde{\mathcal{M}}}$ into
that of $Q(G)_{\mathcal{\mathcal{M}}}$ as seen in the proof of Lemma
$\ref{class-in-a-family-from-a-class-in-its-dual}$ we see that
the vanishing divisors of $\hex(\chi^{-1})_{\tilde{\mathcal{M}}}$ are
precisely the vanishing divisors of $\hex(\chi)_{\mathcal{M}}$.
On Figure $\ref{figure-18}$ we marked for each arrow of $\hex(\chi)$ 
the name for its vanishing divisor in $\hex(\chi)_{\mathcal{M}}$ as
translated into the language of Lemma $3.1$ of 
\cite{CautisLogvinenko-ADerivedApproachToGeometricMcKayCorrespondence}.

\begin{figure}[!h] \begin{center}
\includegraphics[scale=0.24]{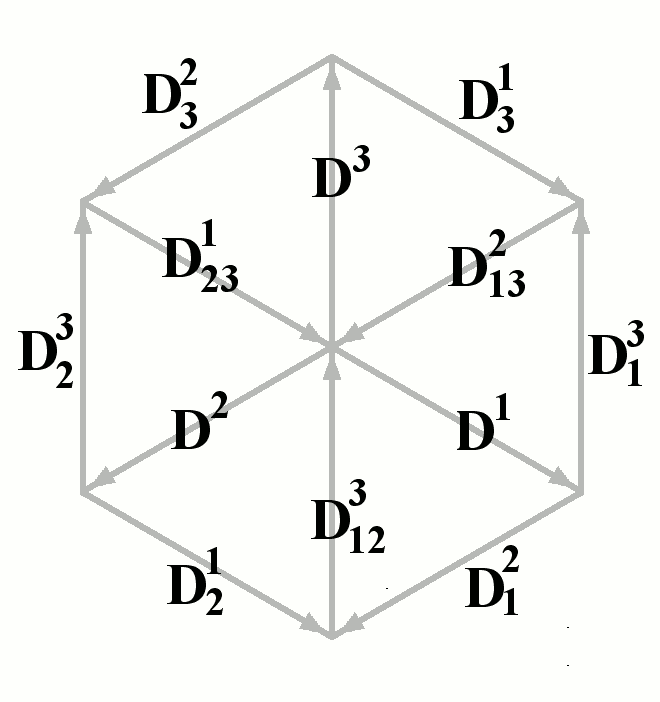} \end{center}
\caption{\label{figure-18}} The vanishing divisors of
$\hex(\chi)_{\mathcal{M}}$ in the language of Lemma $3.1$ of 
\cite{CautisLogvinenko-ADerivedApproachToGeometricMcKayCorrespondence}.
\end{figure}

By
\cite{CautisLogvinenko-ADerivedApproachToGeometricMcKayCorrespondence},
Lemma $3.1$,
$\supp H^{-1} \left( \Psi(\mathcal{O}_0
\otimes \chi)\right)$ has a filtration with successive quotients 
\begin{itemize} \item $\O_Z \otimes
\L_{12}(\gcd(D_1^2,D_2^1))$ where $Z$ is the scheme theoretic
intersection of $\gcd(D_1^2,D_2^1)$ and the effective part of $D^3 +
\lcm(\tD_3^1,\tD_3^2) - \tD_1^2 - D^1$ \item $\O_Z \otimes
\L_{13}(\gcd(D_1^3,D_3^1))$ where $Z$ is the scheme theoretic
intersection of $\gcd(D_1^3,D_3^1)$ and the effective part of $D^2 +
\lcm(D_2^1,\tD_2^3) - \tD_3^1 - D^3$ \item $\O_Z \otimes
\L_{23}(\gcd(D_2^3,D_3^2))$ where $Z$ is the scheme theoretic
intersection of $\gcd(D_2^3,D_3^2)$ and the effective part of $D^1 +
\lcm(D_1^2,D_1^3) - \tD_2^3 - D^2$ \end{itemize} where $\tD^i_j =
D^i_j - \gcd(D^i_j,D^j_i)$. 

Suppose $\supp H^{-1} \left(\Psi(\mathcal{O}_0 \otimes
\chi)\right)$ has an irreducible component which is less than a divisor 
and let us assume that it belongs to the third of 
the quotients in the filtration above. The other two cases are similar
and simpler. As each of $D^i_j$ is a sum 
$\sum_{e \in \mathfrak{E}} b_e E_e$ with 
$b_e = 0$ or $1$ we see that this component must be of form
$E_{e,f} = E_e \cap E_f$ with $E_e$ belonging to $\gcd(D_3^2,D_2^3)$ and 
$E_f$ to $D^1 + \lcm(D_2^1,D_3^1) - \tD_3^2 - D^2$. 
Every possible arrangement of which arrows of $\hex(\chi)$ vanish
along $E_e$ 
and which don't is listed in Figures \ref{figure-09} - \ref{figure-13}. 
Observe that $\chi$ can not be a source for $E_e$ as then 
$\supp H^{-1} \left(\Psi(\mathcal{O}_0 \otimes \chi)\right)$
would contain all of $E_e$ (Proposition
$\ref{sinks-and-sources-iff-in-transforms-support}$) and $E_{e,f}$ 
wouldn't be an irreducible component. Similarly $\chi$ can not be 
a sink as then $E_e$ 
would belong to either the support of $\supp H^{0}
\left(\Psi(\mathcal{O}_0 \otimes \chi)\right)$ or $\supp H^{-2}
\left(\Psi(\mathcal{O}_0 \otimes \chi)\right)$ 
and $\Psi(\mathcal{O}_0 \otimes \chi)$ 
wouldn't be supported in a single degree.  
By inspection we see that the only remaining 
possibility is that of $\chi$ being an $x$-$(0,1)$-charge for $E_e$. 
Arguing similarly for $E_f$ we see that $\chi$ must be either an $x$-tile,
or an $y$-$(1,0)$-charge or a $z$-$(1,0)$-charge. But observe that in each 
of these cases $D^2_{13}$ and $D^3_{12}$ contain $E_e$, while
$D^1_{23}$ contains $E_f$, so $D^1_{23} \cap D^2_{13} \cap D^3_{12}$
contains $E_{e,f}$. Then by Lemma $3.1(3)$ of
\cite{CautisLogvinenko-ADerivedApproachToGeometricMcKayCorrespondence}
$E_{e,f}$ would belong 
$\supp H^{-2} \left(\Psi(\mathcal{O}_0 \otimes \chi)\right)$,
and again $\Psi(\mathcal{O}_0 \otimes \chi)$ wouldn't be supported 
in a single degree. Therefore all irreducible components of
$\supp H^{-1} \left(\Psi(\mathcal{O}_0 \otimes \chi)\right)$ 
are divisors. The claim now follows from Lemma 
\ref{lemma-support-is-connected-and-consists-of-toric-orbit-closures}. 
\end{proof}

\begin{proof}[Proof of Theorem $\ref{theorem-reids-recipe-knows-everything}$]
We proceed case by case:

Proof of $(\ref{item-chi-marks-nothing})$: 

A character $\chi$ of $G$ marks nothing in Reid's recipe if and only
if it is the trivial character $\chi_0$. Using Lemma $3.1(3)$ of 
\cite{CautisLogvinenko-ADerivedApproachToGeometricMcKayCorrespondence}
as detailed in Lemma \ref{lemma-support-h1-doesn't-contain-toric-curves}
to compute $H^{-2} \left(\Psi(\mathcal{O}_0 \otimes \chi)\right)$
we see that for any $\chi \in G^\vee$
$$
H^{-2} \left(\Psi(\mathcal{O}_0 \otimes \chi)\right) = 
\mathcal{O}_Y(D) \otimes \mathcal{O}_D 
$$
where $D$ is the union of all $E \in \Except(Y)$ such 
that $\chi$ is a $(0,3)$-sink in $SS_{\mathcal{M}, E}$. 
By Proposition $4.14$ of
\cite{CautisLogvinenko-ADerivedApproachToGeometricMcKayCorrespondence}
the sink-source graph $SS_{\mathcal{M}, E}$ of any $E \in \Except(Y)$ 
has only one $(0,3)$-sink - the vertex $\chi_0$. Thus $H^{-2} 
\left(\Psi(\mathcal{O}_0 \otimes \chi)\right)$ is
$\mathcal{O}_Y(\Except(Y))
\otimes \mathcal{O}_{\Except(Y)}$ if $\chi =\chi_0$ and 
zero if $\chi \neq \chi_0$. The claim now follows. 
 
Proof of $(\ref{item-chi-marks-a-divisor})$:

\em The `If' direction: \rm Let $e \in \mathfrak{E}$ be such that 
$E_e \subset \Except(Y)$. We claim that it follows from Propositions 
$\ref{prps-(3,3)-source-graph-to-case-1}$, $\ref{prps-one-(2,1)-source-one-(1,2)-source-graph-to-case-2}$ and $\ref{prps-3-(1,2)-sources-to-case-3}$
that $\chi$ marks $E_e$ if and only if the vertex $\chi$
is a $(3,0)$-sink. Indeed, suppose the sink-source graph
$SS_{\mathcal{M}, E_e}$ is as depicted on Figure $\ref{figure-15a}$.
Then $SS_{\mathcal{M}, E_e}$ has a single $(3,0)$-sink $O_2$. 
Since $O_1$, the $(0,3)$-sink, is the trivial character $\chi_0$
we can see that $O_2$ is the character $\kappa(x^a y^b)$. By
Proposition $\ref{prps-(3,3)-source-graph-to-case-1}$ the 
triangulation $\Sigma$ around $e$ looks as on Figure
$\ref{figure-15b}$. Reid's recipe prescribes then for $E_e$
to be marked by $\xi^2$ where $\xi = \kappa(x^i) = \kappa (y^j) =
\kappa(z^k)$. Since by Proposition
$\ref{prps-(3,3)-source-graph-to-case-1}$ we have $i = a$, $j = b$
and $k = c$ we see that $O_2 = \kappa(x^a y^b) = \xi^2$ and 
the claim follows. The cases of $SS_{\mathcal{M}, E_e}$
being as on Figure $\ref{figure-16a}$ or on Figure $\ref{figure-17a}$
are treated similarly. After we express the powers of 
$x$,$y$ and $z$ in the ratios marking the edges incident to $e$
in terms of the lengths of the marked edges in $SS_{\mathcal{M},
E_e}$ the corresponding formula of Reid's recipe becomes the
natural formula for calculating $(3,0)$-sinks. 

On the other hand, by
Proposition $\ref{sinks-and-sources-iff-in-transforms-support}$ the
vertex $\chi$ is a $(3,0)$-sink for $E_e$ if and only if $E_e \subset
\supp H^0(\Psi(\mathcal{O}_0 \otimes \chi))$. We conclude
that $\chi$ marks $E_e$ if and only if $E_e \subset
\supp H^0(\Psi(\mathcal{O}_0 \otimes \chi))$. In particular,
if $\supp H^0\left(\Psi(\mathcal{O}_0 \otimes \chi)\right) = E_e$
then $\chi$ marks $E_e$. 

\em The `Only if' direction: \rm This was proved in 
Proposition 9.3 of \cite{Craw-Ishii-02}. There it
was done by showing that if $\chi$ marks some divisor $E$ then $\chi$
defines a wall of the $G$-$\hilb(\mathbb{C}^3)$ chamber in the space
of stability conditions for $G$-constellations and $E$ is the unstable
locus corresponding to crossing that wall. It then follows that
$\Psi(\mathcal{O}_0 \otimes \chi) = \mathcal{L}_{\chi}^{-1} \otimes
\mathcal{O}_E$. 

Proof of $(\ref{item-chi-marks-a-single-curve})$: 

By Lemma $3.1(1)$ of 
\cite{CautisLogvinenko-ADerivedApproachToGeometricMcKayCorrespondence}
we have:
$$H^{0} \left(\Psi(\mathcal{O}_0 \otimes \chi)\right) =
\mathcal{L}^{-1}_\chi \otimes \mathcal{O}_D.$$
where $D$ is the support of 
$H^{0} \left(\Psi(\mathcal{O}_0 \otimes \chi)\right)$. 
It suffices therefore to prove that $\chi$ marks 
a single toric curve in Reid's 
recipe if and only if this curve is the whole of
$\supp H^0\left(\Psi(\mathcal{O}_0 \otimes \chi)\right)$.   

Let $\left<e,f\right>$ be any two-dimensional cone in $\mathfrak{F}$ and 
let $E_{e,f}$ be the corresponding toric curve. 

\em Claim A: \rm If $\chi$ marks $E_{e,f}$ in Reid's recipe,
and $\supp H^0\left(\Psi(\mathcal{O}_0 \otimes \chi)\right)$
is non-empty then $E_{e,f}$ is an irreducible component of 
$\supp H^0\left(\Psi(\mathcal{O}_0 \otimes \chi)\right)$. 

\em Claim B: \rm If $E_{e,f}$ is an irreducible component
of $\supp H^0\left(\Psi(\mathcal{O}_0 \otimes \chi)\right)$ then
$E_{e,f}$ is the whole of 
$\supp H^0\left(\Psi(\mathcal{O}_0 \otimes \chi)\right)$ and 
$\chi$ marks $E_{e,f}$ in Reid's recipe. 

Suppose these claims were true and suppose in Reid's recipe 
$\chi$ marks $E_{e,f}$ and no other curve. Then $\chi$ is not the trivial 
character $\chi_0$ and so, as seen in the proof of
(\ref{item-chi-marks-nothing}), 
$\Psi(\mathcal{O}_0 \otimes \chi)$ is not supported in degree $-2$. 
Neither it is supported in degree $-1$, as we have it from Lemmas 
\ref{lemma-divisor-in-supp-h1-iff-two-marked-curves} and
\ref{lemma-support-h1-doesn't-contain-toric-curves} that 
the support of $H^{-1}\left(\Psi(\mathcal{O}_0 \otimes \chi)\right)$ 
consists of divisors which each contain two or more curves marked by
$\chi$. Therefore $\Psi(\mathcal{O}_0 \otimes \chi)$ is
supported in degree $0$ and we have 
it from Claim A that $E_{e,f}$ is an irreducible component of $\supp
H^0\left(\Psi(\mathcal{O}_0 \otimes \chi)\right)$ and then 
from Claim B that $E_{e,f}$ is the whole of 
$\supp H^0\left(\Psi(\mathcal{O}_0 \otimes \chi)\right)$.
Conversely suppose 
$E_{e,f} = \supp H^0\left(\Psi(\mathcal{O}_0 \otimes \chi)\right)$. 
Then by claim $B$ the character $\chi$ marks 
$E_{e,f}$ and by claim $A$ no other curves can be marked by $\chi$. 
Thus to prove $(\ref{item-chi-marks-a-single-curve})$ it suffices 
to prove these two claims.

\em Proof of Claim A: \rm Suppose $\chi$ marks $E_{e,f}$ and $\supp
H^0\left(\Psi(\mathcal{O}_0 \otimes \chi)\right)$ is non-empty. 
Then $\supp H^1\left(\Psi(\mathcal{O}_0 \otimes \chi)\right)$ is
empty and so by Proposition
$\ref{sinks-and-sources-iff-in-transforms-support}$ the vertex
corresponding to $\chi$ can not be a source in $SS_{\mathcal{M}, E}$
for any $E \in \Except(Y)$. In particular, $\chi$ is not a source
in $SS_{\mathcal{M}, E_e}$.  Let us consider all the possibilities
listed on Figures
$\ref{figure-15}$-$\ref{figure-17}$ for 
$SS_{\mathcal{M},E_e}$ and the corresponding toric fans of $e$ and 
let us check when is it possible
for $\chi$ in $SS_{\mathcal{M},E_e}$ not to be a source while marking 
one of the edges incident to $e$. We see that it is only possible
if the toric fan of $e$ is as
depicted on Figure $\ref{figure-16c}$, $\ref{figure-16d}$ or
$\ref{figure-16e}$ up to a permutation of $x$, $y$ and $z$. 
Assume without loss of generality that this permutation is such that 
the unique straight line passing through $e$ is of form 
$x^{\bullet} : y^{\bullet}$, i.e. exactly as on Figures
$\ref{figure-16c}$-$\ref{figure-16e}$. We also see that 
$\chi$ must be one of the vertices denoted on Figure $\ref{figure-16a}$
as $P$ or $Q$. Assume without loss of generality that $\chi$ is the vertex 
$P$. Then $\chi$ is a $y$-$(1,0)$-charge in $SS_{\mathcal{M},E_e}$
and $(e,f)$ is carved out by a ratio of form $x^{\bullet} :
z^{\bullet} y^{\bullet}$. Observe further that one of the two 
triangles containing $(e,f)$ must also 
contain the edge carved out by a ratio of form
$z^{\bullet} : y^{\bullet} x^{\bullet}$. But if one edge 
of any triangle in $\Sigma$ is carved out by a ratio of 
form $x^{\bullet} \colon y^{\bullet} z^{\bullet}$ and another edge
by a ratio of form $z^{\bullet} \colon x^{\bullet} y^{\bullet}$, 
then its third edge must be carved out by a ratio of form 
$y^{\bullet} : x^{\bullet}z^{\bullet}$ 
(see \cite{Craw-AnexplicitconstructionoftheMcKaycorrespondenceforAHilbC3}, 
\S 2). We conclude that one of the edges incident to $f$
is carved out by a ratio of form $y^{\bullet} : x^{\bullet}z^{\bullet}$. 
But now apply the same argument to $E_f$. 
We see that $f$ must also be as depicted on Figure
$\ref{figure-16c}$, $\ref{figure-16d}$ or $\ref{figure-16e}$ up 
to a permutation of $x$, $y$ and $z$. The permutation 
can not be such that the straight line which passes through 
$f$ is carved out by a ratio of form $z^{\bullet} : y^{\bullet}$. This
is because $(e,f)$ is carved out by a ratio of form 
$x^{\bullet} : y^{\bullet} z^{\bullet}$ which under such permutation 
would correspond to $z^{\bullet} : x^{\bullet} y^{\bullet}$ on 
Figures $\ref{figure-16c}$-$\ref{figure-16e}$. Which is impossible
as then $\chi$, the character marking $(e,f)$, 
would be the vertex $I_2$ on Figure $\ref{figure-16a}$ which is a source 
in $SS_{\mathcal{M},E_f}$. But neither can the straight line passing 
through $f$ be carved out by a ratio
of form $x^{\bullet} : y^{\bullet}$ as then $f$ has only one edge 
carved out by the ratio of form $y^{\bullet} : x^{\bullet}z^{\bullet}$
and this edge clearly can not be contained in the same triangle
as the edge $(e,f)$ which is marked by a ratio of form $x^{\bullet}
\colon y^{\bullet} z^{\bullet}$ (see Figure \ref{figure-16e}). We conclude
that the straight line passing through $f$ must be carved out by a
ratio of form $x^{\bullet} : z^{\bullet}$.  Then, since $(e,f)$ is 
carved out by $x^{\bullet} : y^{\bullet} z^{\bullet}$, the 
vertex $\chi$ which marks $(e,f)$ has to be a 
$z$-$(1,0)$-charge in $SS_{\mathcal{M}, E_{f}}$. 
Since $\chi$ is also $y$-$(1,0)$-charge in $SS_{\mathcal{M},
E_{f}}$ we conclude by consulting Figure 
$\ref{figure-10}$) that each of the three arrows in $Q(G)$ whose 
tail is $\chi$ vanishes either along $E_e$ or along $E_f$ but
all three of them vanish neither along $E_e$ nor along $E_f$. 
This by Proposition $4.6$ and Lemma $3.1(1)$ of
\cite{CautisLogvinenko-ADerivedApproachToGeometricMcKayCorrespondence},
and translating from the language of $Q(G)_{\tilde{M}}$ into
that of $Q(G)_{\mathcal{M}}$ as seen in the proof of Lemma
$\ref{class-in-a-family-from-a-class-in-its-dual}$, implies
that $E_{e,f}$ belongs to the support of $H^0 \left(\Psi(\mathcal{O}_0 \otimes
\chi)\right)$, but neither $E_{e}$ nor $E_{f}$ do. Therefore
$E_{e,f}$ is an irreducible component of $\supp H^0
\left(\Psi(\mathcal{O}_0 \otimes \chi)\right)$. 

\em Proof of Claim B: \rm Denote by $D$ the support
of $H^0\left(\Psi\left(\mathcal{O}_0 \otimes \chi\right)\right)$
and suppose $E_{e,f}$ is an irreducible component of $D$. 
By Lemma \ref{lemma-support-is-connected-and-consists-of-toric-orbit-closures}
each irreducible component of $D$ is either a toric divisor 
or a toric curve. But as seen in the proof of the 
\em`if' \rm direction of (\ref{item-chi-marks-a-divisor})
$D$ contains a toric divisor $E$ if and only if $\chi$ marks $E$.  
And by the  \em `only if' \rm direction  
(\ref{item-chi-marks-a-divisor}) if $\chi$ marks $E$ then $E$
is the whole of $D$. We conclude that every irreducible component 
of $D$ is a toric curve. We wish to show that $E_{e,f}$
is the only such component and to do that we have to roll up 
our sleeves and calculate some sheaf cohomology.

Recall that $\Psi: D^G(\mathbb{C}^3) \rightarrow D(Y)$ was defined
as the inverse of the Fourier-Mukai equivalence $\Phi: D(Y) \rightarrow
D^G(\mathbb{C}^3)$ of \cite{BKR01}. For any $F \in D(Y)$ we can compute 
the global sections of
$\chi$-eigenparts of $\Phi(F)$ by taking a derived pushdown in 
two different ways. 
By definition 
\begin{align*}
\Phi(F) = \rder \pi_{\mathbb{C}^3 *} 
\left(\mathcal{M}
\overset{\lder}{\otimes} \pi^*_{Y} (F \otimes \chi_0) \right)
\end{align*}
where $\pi_{Y}$ and $\pi_{\mathbb{C}^3}$ are the projections
from $Y \times \mathbb{C}^3$ onto $Y$ and $\pi_{\mathbb{C}^3}$. 
Let $\gamma_{\mathbb{C}^3}:\; \mathbb{C}^3 \rightarrow \mathbb{C}^3/G$ 
and $\gamma_{Y}: Y \rightarrow \mathbb{C}^3/G$ be 
the quotient map and the resolution morphism. Making use of the projection 
formula, we have 
$$ \rder \gamma_{\mathbb{C}^3 *}  \Phi(F) = 
\rder \left(\pi_{\mathbb{C}^3} \circ \gamma_{\mathbb{C}^3}\right)_* 
\left(\mathcal{M}
\overset{\lder}{\otimes} \pi^*_{Y} (F \otimes \chi_0) \right) = $$
$$ = \rder \left(\pi_{Y} \circ \gamma_{Y}\right)_*
\left(\mathcal{M} \overset{\lder}{\otimes} \pi^*_{Y} 
(F \otimes \chi_0) \right) = $$
$$  = \rder \gamma_{Y *} \left(\bigoplus_{\chi \in G^\vee} \mathcal{L}_\chi
\otimes (F \otimes \chi_0))\right) $$
Let $F$ be a sheaf in $\cohcat(Y)$. Taking global sections and making use 
of $G$-equivariance, we see that for any $\xi \in G^\vee$
\begin{align} \label{eqn-chi-eigenparts-via-sheaf-cohomology}
H^i\; \Gamma(\Phi(F)_\xi) = H^i(\mathcal{L}_\xi \otimes F) 
\end{align} 
where on the RHS we take the $i$-th sheaf cohomology and on the LHS we take 
the $i$-th cohomology of the vector space complex $\Gamma(\Phi(F)_\xi)$
where the complex $\Phi(F)_\xi$ is the $\xi$-eigenpart of 
the complex $\Phi(F)$.

Let now $\chi'$ be any character of $G$. 
As $\Phi(\mathcal{L}^{-1}_\chi \otimes \mathcal{O}_D) = \mathcal{O}_0 
\otimes \chi$ 
setting $F = \mathcal{L}^{-1}_\chi \otimes \mathcal{O}_D$ and $\xi =
\chi'$ in $\eqref{eqn-chi-eigenparts-via-sheaf-cohomology}$ yields
that 
\begin{align*}
\chi \left( \mathcal{L}_{\chi'} \otimes \mathcal{L}^{-1}_{\chi}
\otimes \mathcal{O}_D \right) = 
\begin{cases}
0 \text{ for } \chi' \neq \chi \\
1 \text{ for } \chi' =  \chi
\end{cases}.
\end{align*}
By $\chi(-)$ we denote the Euler characteristic 
$\sum_{i \in \mathbb{Z}} (-1)^i \dim H^{i}(-)$.
Then for $\chi' \neq \chi$ we have 
\begin{align} \label{eqn-euler-characteristic-of-deg-0-support}
\notag 0 = \chi\left(\mathcal{L}_{\chi'} \otimes \mathcal{L}^{-1}_{\chi}
\otimes \mathcal{O}_D\right)  
& = \sum_{\sigma \in D} \deg_{E_{\sigma}} \left(\mathcal{L}_{\chi'}
\otimes \mathcal{L}^{-1}_{\chi}\right) + \chi(\mathcal{O}_D) = \\
& = \left( \sum_{\sigma \in D} \deg_{E_{\sigma}} \mathcal{L}_{\chi'} \right) - 
\left( \sum_{\sigma \in D} \deg_{E_{\sigma}} \mathcal{L}_\chi \right)
+ \chi(\mathcal{O}_D).
\end{align} 
where we abuse the notation 
by writing $\sigma \in D$ to mean that $\sigma$ is a two-dimensional
cone in $\mathfrak{F}$ such that $E_{\sigma} \subset D$. Observe that 
the sum $\sum_{\sigma \in D} \deg_{E_{\sigma}} \mathcal{L}_{\chi'}$ 
doesn't depend on the choice of $\chi' \neq \chi$. 
But $\chi_0 \neq \chi$ as otherwise $\Psi(\mathcal{O}_0 \otimes
\chi)$ would have to be supported in degree $-2$ by part
(\ref{item-chi-marks-nothing}) of this theorem. Evaluating  
$\sum_{\sigma \in D} \deg_{E_{\sigma}} \mathcal{L}_{\chi'}$
for $\chi' = \chi_0$ we obtain zero since
$\mathcal{L}_{\chi_0} = \mathcal{O}_Y$. Therefore  
$\sum_{\sigma \in D} \deg_{E_{\sigma}}
\mathcal{L}_{\chi'} = 0$ for any $\chi' \neq \chi$. On the other
hand by Lemma $\ref{lemma-markings-divide-nonequal-ggraph-pieces}$
the degree of $\mathcal{L}_{\chi'}$ is non-negative on any toric curve 
$E_{\sigma}$. And by Corollary \ref{cor-markings-belong-to-ggraphs} 
the degree of $\mathcal{L}_{\chi'}$ is $1$ on any curve marked by 
$\chi'$. Therefore for any $\chi'$ which marks any of the curves in $D$  
we have $\sum_{\sigma \in D} \deg_{E_{\sigma}}
\mathcal{L}_{\chi'} \geq 1$. We conclude that $\chi$ marks
all the curves in $D$. Assume that $D$ contains some
curve $E_{e',f'}$ other then $E_{e,f}$. As $D$ is connected $E_{e',f'}$ 
must intersect $E_{e,f}$. Then in $\Sigma$ the edges $(e,f)$ and
$(e',f')$ must be two sides of some regular triangle and therefore
have a common vertex. Without loss of generality assume $e = e'$. 
Then $E_e$ contains two curves marked by $\chi$ and by Lemma 
\ref{lemma-divisor-in-supp-h1-iff-two-marked-curves}
it must belong to $H^{-1} \supp \left(\Psi(\mathcal{O}_0 \otimes
\chi)\right)$, which 
contradicts $\Psi(\mathcal{O}_0 \otimes \chi)$ being supported in
a single degree. We conclude that $E_{e,f}$ is the whole of $D$ and
the claim follows.  

Proof of $(\ref{item-chi-marks-several-curves})$:

\em The `If' direction: \rm  Suppose $\Psi(\mathcal{O}_0 \otimes \chi)$
is supported in degree $-1$. Then $\chi$ can not mark a divisor
or mark a single curve or mark nothing in Reid's recipe since then 
$\Psi(\mathcal{O}_0 \otimes \chi)$ would be necessarily supported
in degree $0$ or degree $-2$ by
by parts $(\ref{item-chi-marks-a-divisor})$, $(
\ref{item-chi-marks-a-single-curve})$, $(\ref{item-chi-marks-nothing})$
of this theorem which we already proved.  
Hence $\chi$ must mark several curves in Reid's recipe.

\em The `Only if' direction: \rm 

Assume that $\chi$ marks several curves in Reid's recipe. Then $\chi
\neq \chi_0$ and by the proof of the part 
$(\ref{item-chi-marks-nothing})$ of this theorem 
$\Psi(\mathcal{O}_0 \otimes \chi)$
is not supported in degree $-2$. Nor can $\Psi(\mathcal{O}_0 \otimes \chi)$
be supported in degree $0$. For by Lemma
\ref{lemma-support-is-connected-and-consists-of-toric-orbit-closures}
the irreducible components of the support of $\Psi(\mathcal{O}_0 \otimes \chi)$
are toric divisors and toric curves. And as seen in the proof of the `if'
direction of the part $(\ref{item-chi-marks-a-divisor})$ 
of this theorem a divisor belongs to 
$\supp H^0\left(\Psi(\mathcal{O}_0 \otimes \chi)\right)$ 
if and only $\chi$ marks this divisor in Reid's recipe. 
Similarly, as we seen in the proof of part
$(\ref{item-chi-marks-a-single-curve})$, a toric curve is an
irreducible component of 
$\supp H^0\left(\Psi(\mathcal{O}_0 \otimes \chi)\right)$
if and only if $\chi$ marks just this curve in Reid's recipe.
We conclude that $\Psi(\mathcal{O}_0 \otimes \chi)$ is
supported in degree $-1$. 

Finally, the fact that $\supp H^{-1} \left(\Psi(\mathcal{O}_0 \otimes
\chi)\right)$ is precisely the union of the divisors containing
two or more curves marked by $\chi$ is a consequence of Lemmas 
\ref{lemma-divisor-in-supp-h1-iff-two-marked-curves} and
\ref{lemma-support-h1-doesn't-contain-toric-curves}. 

This concludes our proof of Theorem 
\ref{theorem-reids-recipe-knows-everything}. 
\end{proof}

\begin{proof}[Proof of Theorem $\ref{theorem-geometrical-mckay-correspondence}$]
We get this as a free consequence of Theorem 
\ref{theorem-reids-recipe-knows-everything}
by observing that any non-trivial $\chi \in G^\vee$ must either mark a divisor, 
a single curve or several curves (Corollary $4.6$ of
\cite{Craw-AnexplicitconstructionoftheMcKaycorrespondenceforAHilbC3}). 
Thus any $\chi \in G^\vee$ must belong to one of the four cases in
Theorem $\ref{theorem-reids-recipe-knows-everything}$.
\end{proof}

\bibliography{../../references}
\bibliographystyle{amsalpha}

\end{document}